\documentclass[12pt, reqno, a4paper]{amsart}
\usepackage{amsmath,mathrsfs}
\usepackage{amssymb}
\usepackage{color}
\usepackage{calligra}
\usepackage{dsfont}
\usepackage{pstricks,pst-node}
\usepackage[latin1]{inputenc}
\usepackage{todonotes}
\usepackage{hyperref}

\newcommand\NoBlackBoxes{\global\overfullrule0pt}
\NoBlackBoxes

\usepackage{enumitem}
\setitemize{noitemsep,topsep=0pt,parsep=0pt,partopsep=0pt}

\newcommand{\eps}{\varepsilon}

\newcommand{\N}{\mathbb{N}}

\renewcommand{\P}{\mathbb{P}}

\makeatletter
\let\serieslogo@\relax
\let\@setcopyright\relax

\allowdisplaybreaks
\theoremstyle{plain}
\newtheorem{theorem}{Theorem}[section]
\newtheorem{lemma}[theorem]{Lemma}
\newtheorem{corollary}[theorem]{Corollary}
\newtheorem{proposition}[theorem]{Proposition}

\theoremstyle{definition}

\theoremstyle{remark}
\newtheorem{remark}[theorem]{Remark}

\renewcommand{\P}{{\mathbb{P}}}

\newcommand{\R}{{\mathbb{R}}}

\renewcommand{\epsilon}{\varepsilon}
\renewcommand{\phi}{\varphi}

\newcommand{\be}{\begin{equation}}
	\newcommand{\ee}{\end{equation}}

\numberwithin{equation}{section}

\usepackage[latin1]{inputenc}

\title{On the Effect of Bottlenecks in Block Spin Models}

\author[Isabel Lammers]{Isabel Lammers}
\address[Isabel Lammers]{Fachbereich Mathematik und Informatik,
	University of M\"unster,
	Einsteinstra\ss e 62,
	48149 M\"unster,
	Germany}

\email[Isabel Lammers]{isabel.lammers@uni-muenster.de}

\author[Matthias L\"owe]{Matthias L\"owe}
\address[Matthias L\"owe]{Fachbereich Mathematik und Informatik,
	University of M\"unster,
	Einsteinstra\ss e 62,
	48149 M\"unster,
	Germany}

\email[Matthias L\"owe]{maloewe@uni-muenster.de}

\keywords{Curie Weiss model, block spin model, phase transition, bottleneck}
	\subjclass[2020]{Primary: 60F05, 82B05, 82B26}

\begin{document}
	\begin{abstract}
		We study a bottleneck spin model with $N$ spins, split into two Curie--Weiss models at low temperature with a bottleneck between them. We propose multiple ways of how to realize such a bottleneck and study its influence on the phase transition in the thermodynamic limit $N \to \infty$.
        
        In all versions of this model we prove the existence of a threshold that determines whether or not the presence of the bottleneck is felt in the phase transition. This threshold depends on the size of the bottleneck and the interaction strength through it.
	\end{abstract}

	\maketitle
	\section{Introduction}
	
	This note is motivated by the question of how the geometry, i.e.\ the graph structure, of a graph influences the behavior of a spin system defined on it. Closely related questions have been studied extensively for random walks on graphs. For example, Diaconis asked in \cite{Dicaonis_learn}, Section~5, Question~4, how one can add a perfect matching to a regular graph so as to optimize the mixing time of the random walk on the resulting graph. In \cite{HLS22}, the authors show that adding a random perfect matching to a sequence of graphs $(G_n)$ with diverging sizes and uniformly bounded degrees (under mild connectivity assumptions) leads, with high probability, to cutoff for the random walk on the augmented graphs. This result was extended in \cite{Baran2024} to a setting in which the perfect matchings are assigned an $n$-dependent weight. As long as this weight is sufficiently large, the resulting random walk again exhibits cutoff with high probability, whereas for weights that are too small, cutoff occurs only if the original random walk already exhibits this phenomenon. On the other hand, random walks on dense Erd\H{o}s--R\'enyi random graphs appear to display a universal behavior. For hitting times, this can be seen, for instance, in \cite{LoTe23} and \cite{LoTe25}.
	
	Similarly, spin models -- and in particular the Ising model on dense Erd\H{o}s--R\'enyi random graphs -- exhibit universal behavior at the level of laws of large numbers and fluctuations for the magnetization; see, for example, \cite{BG93} and \cite{KaLS19a, KaLS19b}. For sparse graphs, however, the situation is considerably more delicate, as demonstrated in, among others, \cite{Dembo_Montanari_2010a, Dembo_Montanari_2010b, van_der_Hofstad_et_al_2014, van_der_Hofstad_et_al_2015, van_der_Hofstad_et_al_2015b}.
	
	In this note, we also study the Ising model on a graph. Our work is inspired by the block spin Ising model introduced in \cite{GC08} and later again in \cite{BRS19}. The block spin Ising model is an Ising model on a fully connected graph that is partitioned into two or more groups. Unlike the Curie--Weiss model (the mean-field Ising model), the interaction strength within a group differs from the interaction strength between groups. From this perspective, the models proposed here can be viewed as an attempt to introduce a bottleneck between groups and to investigate how this bottleneck affects the behavior of the Ising model. In light of the results in \cite{HLS22} and \cite{Baran2024}, our work can also be interpreted as introducing a perfect matching -- random or deterministic, which is immaterial for our purposes -- between two otherwise disconnected Curie--Weiss models and examining whether this perturbation influences the phase transition, more precisely, the number of equilibrium points in the low temperature phase.
	
	The remainder of this note is organized as follows. In Section~2, we introduce several variations of models that one can interpret as a bottleneck between two Curie-Weiss models. We start in Section~2.1 with the basic model in its simplest form, namely two
	otherwise unconnected Curie--Weiss models coupled by a weighted perfect matching and identify a critical interaction strength at which the matching affects the low-temperature limiting states. In Section~2.2 we introduce the natural variation of the aforementioned model where each matching edge will be kept with a certain probability $p(N)$ and otherwise deleted. The third model is introduced in Section~2.3 and instead of a perfect matching lets the two Curie-Weiss models interact through a third much smaller Curie-Weiss model. In Sections~3 through 6, we prove the existence of this phase transition, which qualitatively depend on the choice of the specific model.

	\section{Models and main results}

    \subsection{A perfect matching}\label{sec:PerfectMatch}
	We begin with the simplest version of the model we have in mind. It is a block spin model with two blocks of equal size (see, e.g., \cite{BRS19}), coupled by a very sparse interaction between the blocks. The spins are labeled $1,\ldots,N$, where $N$ is assumed to be even. Spins with labels $1,\ldots,N/2$ belong to block $B_1$, while those with labels $N/2+1,\ldots,N$ belong to block $B_2$.
	
	Within each block, we consider a Curie--Weiss type interaction with inverse temperature $\beta>2$. In particular, we assume that every pair of vertices within the same block is connected by an edge, and that each such edge carries the corresponding interaction. By contrast, we assume that each vertex in $B_1$ is connected to exactly one vertex in $B_2$, and that the edges connecting the two blocks carry a weak Ising-type interaction whose strength vanishes as $N\to\infty$.
	
	More precisely, for $\beta>0$ and $0\le \alpha_N \le \beta$, we consider the Hamiltonian
	$$
	H_{N,\alpha_N, \beta}(\sigma):= -\frac \beta {2N} \sum_{i \sim j} \sigma_i \sigma_j
	-\alpha_N \sum_{i \in B_1} \sigma_i \sigma_{i+N/2},
	\quad \sigma \in \{-1,+1\}^N.
	$$
	Here, we write $i \sim j$ if and only if vertices $i$ and $j$ belong to the same block, i.e.\ $\sum_{i \sim j} \sigma_i \sigma_j:=\sum_{i,j\in B_1} \sigma_i \sigma_j+\sum_{i,j\in B_2} \sigma_i \sigma_j.$
	Note that $\alpha_N$ is allowed to depend on $N$; in fact, this dependence will turn out to be the most interesting regime.
	
	The Hamiltonian induces the Gibbs measure
	\begin{equation}\label{gibbs}
		\mu_{N, \alpha_N, \beta} (\sigma)
		:= \frac{e^{-H_{N,\alpha_N, \beta}(\sigma)}}{\sum_{\sigma'}e^{-H_{N,\alpha_N, \beta}(\sigma')}}=:
		\frac{e^{-H_{N,\alpha_N, \beta}(\sigma)}}{Z_{N, \alpha_N,\beta}}.
	\end{equation}
	
	Introducing the mean magnetizations per block,
	$$
	m_1:=m^N_1:=m_1(\sigma):= \frac {2} {N} \sum_{i \in B_1} \sigma_i
	\qquad \text{and} \quad
	m_2:=m^N_2:=m_2(\sigma):= \frac {2} {N} \sum_{i \in B_2} \sigma_i,
	$$
	we may rewrite the Hamiltonian as
	$$
	H_{N,\alpha_N, \beta}(\sigma)=
	-\frac {\beta N} 8 \left(m_1^2+ m_2^2\right)
	-\alpha_N \sum_{i \in B_1} \sigma_i \sigma_{i+N/2},
	\quad \sigma \in \{-1,+1\}^N.
	$$
	
	From now on, we assume that $\beta>2$ and that $\alpha_N>0$ converges to $0$.
	More specifically, for convenience, we take $\alpha_N = N^{-\rho}$ for some $\rho>0$.
	The restriction $\beta>2$ is motivated by the fact that, for this choice of $\beta$ and $\alpha_N\equiv 0$, the model is known to be in the low-temperature phase; see \cite{FU12, BRS19, LS18, KT18, KLSS19, LSV20}.
	This behavior can be understood by studying the asymptotics of the magnetization vector
	$m^N=(m_1,m_2)$.
	
	Indeed, even for the model with Hamiltonian
	$$
	\overline H_{N,\alpha, \beta}(\sigma):=
	-\frac N 2 \left(
	\frac{1}{2}\alpha m_1 m_2
	+\beta \frac{1}{4} m_1^2
	+\frac{1}{4} \beta m_2^2
	\right),
	$$
	(which we introduce here for reference),
	where $0 \le \alpha \le \beta$, one observes the following behavior.
	If $\alpha+\beta\le 2$, the vector $m^N$ converges in probability and almost surely to the zero vector.
	By contrast, for $\beta>2$ and $\alpha\neq 0$, the distribution of $m^N$ converges to
	\be \label{eq:LTlimit}
	\frac 12 \left(
	\delta_{(m^+(\frac{\alpha+\beta}2),m^+(\frac{\alpha+\beta}2))}
	+\delta_{(m^-(\frac{\alpha+\beta}2),m^-(\frac{\alpha+\beta}2))}
	\right).
	\ee
	
	Finally, if $\beta>2$ and $\alpha=0$, the limiting distribution of $m^N$ is a mixture of four Dirac measures,
	\be \label{eq:mixedlimit}
	\frac 14 \left(
	\delta_{(m^+(\frac{\beta}2),m^+(\frac{ \beta}2))}+
	\delta_{(m^+(\frac{ \beta}2),m^-(\frac{ \beta}2))}+
	\delta_{(m^-(\frac{ \beta}2),m^+(\frac{ \beta}2))}+
	\delta_{(m^-(\frac{ \beta}2),m^-(\frac{ \beta}2))}
	\right).
	\ee
	Here, for $\gamma>0$, the quantity $m^+(\gamma)$ denotes the largest solution of
	\begin{equation}
		z=\tanh(\gamma z),
	\end{equation}
	and $m^-(\gamma)=-m^+(\gamma)$.
	
	The main question addressed in this note is whether the presence of the interaction term
	$\alpha_N \sum_{i \in B_1} \sigma_i \sigma_{i+N/2}$ is felt in the limit $\alpha_N\to 0$, and if so, at which rate of convergence.
	Our answer is summarized in the following theorem.
	\begin{theorem}\label{theo:theo1}
		Consider the two-block spin model with $\alpha_N>0$ such that $\alpha_N\downarrow 0$.
		\begin{enumerate}
			\item If $\lim N\alpha_N = \infty$, then the distribution of $m^N$ under the Gibbs measure converges to the limiting distribution
			\be\label{eq:LTalpha0}
				\frac 12 \left(
			\delta_{(m^+(\frac{\beta}2),m^+(\frac{\beta}2))}
			+\delta_{(m^-(\frac{\beta}2),m^-(\frac{\beta}2))}
			\right),
			\ee
			 (note that this is the same as setting $\alpha=0$ in \eqref{eq:LTlimit}).
			\item If $\lim N\alpha_N = 0$, then the distribution of $m^N$ under the Gibbs measure converges to the limiting distribution given in \eqref{eq:mixedlimit}.
		\end{enumerate}
	\end{theorem}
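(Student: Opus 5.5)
The plan is to integrate out the matching interaction exactly, by grouping the spins into the $N/2$ matched pairs $(\sigma_i,\sigma_{i+N/2})$, $i\in B_1$. Let $a$ denote the fraction of pairs of type $(+,+)$, $b$ of type $(+,-)$, $c$ of type $(-,+)$, and $d$ of type $(-,-)$. Then
\[
m_1=a+b-c-d,\qquad m_2=a-b+c-d,\qquad \tfrac 2N\sum_{i\in B_1}\sigma_i\sigma_{i+N/2}=a+d-b-c .
\]
The Gibbs weight of a configuration class is a multinomial coefficient times
\[
\exp\Bigl(\tfrac{\beta N}{8}(m_1^2+m_2^2)+\tfrac{\alpha_N N}{2}(a+d-b-c)\Bigr).
\]
Fixing $(m_1,m_2)$ leaves one free parameter, namely the overlap $q=a+d-b-c$. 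Summing over $q$ gives an exact representation of the law of $m^N$:
\[
\mu(m^N=(x,y))\propto \binom{N/2}{\frac N4(1+x)}\binom{N/2}{\frac N4(1+y)}\,
e^{\frac{\beta N}{8}(x^2+y^2)}\;
\mathbb{E}_{x,y}\bigl[e^{\alpha_N\sum_{i\in B_1}\sigma_i\sigma_{i+N/2}}\bigr].
\]
Here $\mathbb{E}_{x,y}$ is the expectation when $\sigma|_{B_1}$ and $\sigma|_{B_2}$ are drawn independently and uniformly with the prescribed magnetizations. Equivalently, it is the expectation of $e^{\alpha_N S}$ with $S=\sum_i \sigma_i\tau_{\pi(i)}$ over a uniformly random permutation $\pi$ pairing the two blocks.

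The first part of the bottleneck effect comes from the first-order behaviour of this factor. Since $\alpha_N\to 0$ and $|S|\le N/2$, one has $\log \mathbb{E}_{x,y}[e^{\alpha_N S}] = \alpha_N\mathbb{E}S+O(\alpha_N^2 N)$, with $\mathbb{E}S=\frac N2 xy$. This bound on the second-order term needs control of $\mathrm{Var}(S)=O(N)$, which holds for sums over a random permutation (Hoeffding's combinatorial CLT). More crudely, the upper bound can be taken from Hoeffding's inequality for sampling without replacement. So the effective tilt is $\exp\bigl(\tfrac{N\alpha_N}{2}xy+O(N\alpha_N^2)\bigr)$. The remaining Curie--Weiss part is handled by Laplace's method exactly as in the $\alpha_N=0$ case, which is known: the measure concentrates on $O(N^{-1/2})$-neighbourhoods of the four points $(\pm m^+,\pm m^+)$ with $m^+=m^+(\beta/2)$, each carrying asymptotically equal Gaussian mass.

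For part (1), compare the two classes of wells. Near $(m^+,m^+)$ and $(m^-,m^-)$ the tilt is $e^{N\alpha_N (m^+)^2/2}$, and near the mixed points it is $e^{-N\alpha_N (m^+)^2/2}$. The ratio of masses is therefore $\exp(-N\alpha_N (m^+)^2(1+o(1)))\to 0$, provided the error $O(N\alpha_N^2)$ is $o(N\alpha_N)$, which holds since $\alpha_N\to0$. I still have to verify that the $O(\sqrt N)$ fluctuations inside a well do not spoil this. Inside a well, $xy$ varies by $O(N^{-1/2})$, contributing $O(\alpha_N\sqrt N)=o(N\alpha_N)$, so this is harmless. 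Symmetry under global spin flip gives weight $1/2$ to each of the two aligned points, which yields \eqref{eq:LTalpha0}. Away from the wells the free energy is strictly smaller by a constant times $N$, which dominates the bounded tilt $e^{N\alpha_N/2}=e^{o(N)}$.

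For part (2), when $N\alpha_N\to0$ the crude bound $|\alpha_N S|\le \alpha_N N/2\to0$ already gives a tilt factor $1+o(1)$ uniformly. The law of $m^N$ is then asymptotically the $\alpha_N=0$ law, namely \eqref{eq:mixedlimit}; this case needs no expansion at all. The main obstacle is part (1) in the regime where $N\alpha_N\to\infty$ slowly. There the error term must be uniform in $(x,y)$ over the wells, and the Laplace asymptotics must be uniform under a tilt that is unbounded but $o(N)$. I would first try the one-sided bounds $\alpha_N\mathbb{E}S\le\log\mathbb{E}e^{\alpha_N S}\le \alpha_N\mathbb{E}S+\alpha_N^2N/4$, from Jensen and from Hoeffding's inequality for sampling without replacement, which should suffice for the comparison of wells.
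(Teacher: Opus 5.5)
Your argument is correct. It shares the paper's skeleton: the decomposition \eqref{eq:mu-decomp}, a pointwise comparison of $(x,y)$ with $(x,-y)$, and spin-flip symmetry. Where it departs from the paper is in how you estimate $\mathbb E_{x,y}[e^{\alpha_N S}]$.

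The paper proves Lemma~\ref{lem:cross_conc} by a Stirling/large-deviation analysis of the hypergeometric counts of $++$ and $--$ pairs, and obtains an $o(N\alpha_N)$ error. You instead use $\mathbb E S=\frac N2xy$, Jensen, and Hoeffding's domination of sampling without replacement by sampling with replacement. Writing $S=2T-\frac N2y$, with $T$ the sum of the $B_2$-spins matched to the $\frac N4(1+x)$ plus-spins of $B_1$, this gives
$\frac{N\alpha_N}{2}xy\le\log\mathbb E_{x,y}[e^{\alpha_N S}]\le\frac{N\alpha_N}{2}xy+O(N\alpha_N^2)$
uniformly in all $(x,y)$.

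What your route buys:
\begin{enumerate}
\item It is shorter and more quantitative than the paper's concentration lemma.
\item You only need to localize to fixed $\eta$-neighbourhoods of the four wells, since the Curie--Weiss large deviation bound $e^{-c_\eta N}$ beats the comparison factor $e^{N\alpha_N}=e^{o(N)}$. The paper localizes to $N^{-\kappa}$-neighbourhoods by moderate deviations, which is why it needs the extra assumption $\alpha_N=N^{-\rho}$ with $2\kappa<\rho$. Your route covers arbitrary $\alpha_N\downarrow0$.
\item Your part (2) is more direct than the paper's: $e^{-N\alpha_N}\le d\mu_{N,\alpha_N,\beta}/d\mu_{N,0,\beta}\le e^{N\alpha_N}$ gives vanishing total variation distance to the decoupled law, with no tilt asymptotics at all.
\end{enumerate}

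Two caveats on your write-up. First, the combinatorial-CLT variance remark does not by itself control an exponential moment, so rely on the Jensen/Hoeffding bounds. Second, the claim that $xy$ varies only by $O(N^{-1/2})$ inside a well is false for the tilted measure when $\alpha_N\gg N^{-1/2}$: the linear part of the tilt shifts the wells by order $\alpha_N$.

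Neither point damages the proof. Since $|\Omega_N(x,y)|$ and $e^{\beta N(x^2+y^2)/8}$ are invariant under $y\mapsto-y$, your two bounds give directly
$\mu_{N,\alpha_N,\beta}(m^N=(x,-y))\le e^{-N\alpha_N((m^+-\eta)^2-O(\alpha_N))}\,\mu_{N,\alpha_N,\beta}(m^N=(x,y))$
uniformly on the $\eta$-neighbourhood of $(m^+,m^+)$. This settles every regime $N\alpha_N\to\infty$, including the slow one you flagged as the main obstacle.
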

	
	The theorem shows that even a sparse (one inter-block edge per spin) and weak (i.e., $\alpha_N\to 0$) interaction between the blocks affects the limiting behavior, provided it is not too weak, in the sense that $\lim N\alpha_N=\infty$.

    \subsection{A diluted matching}\label{sec:DiluteMatch}
	Recall that one of the main motivations for introducing the model in Section~\ref{sec:PerfectMatch} was to analyze how the insertion of bottlenecks into a complete graph affects the behavior of the Ising model defined on it. One might argue that a configuration in which each vertex in $B_1$ is connected to exactly one vertex in $B_2$ does not constitute a genuine bottleneck --- although, from the perspective of the complete graph, it does represent a severe restriction of connectivity.
	
	In this section, we consider a first variation of the model in which the connections between the two blocks are further thinned. More precisely, we assume that each potential edge between $i\in B_1$ and $i+\frac N2\in B_2$ is present independently with probability $p(N)$. Formally, this leads to the Hamiltonian
	\[
	H_{N,\alpha_N, \beta}^{\eps}(\sigma)
	:= -\frac \beta {2N} \sum_{i \sim j} \sigma_i \sigma_j
	-\alpha_N \sum_{i \in B_1} \varepsilon_{i} \sigma_i \sigma_{i+N/2},
	\quad \sigma \in \{-1,+1\}^N,
	\]
	where the superscript $\eps$ indicates that it depends on the value of the random variables $(\eps_i)$ that are Bernoulli random variables satisfying
	\[
	\P(\varepsilon_{i}=1)=1-\P(\varepsilon_{i}=0)=p(N).
	\]
	Let $\mu_{N,\alpha_N,\beta}^{\eps}$ denote the Gibbs measure associated with the Hamiltonian
	$H_{N,\alpha_N, \beta}^{\eps}$.
	We assume throughout that $N p(N) \to \infty$, since otherwise the number of inter-block edges does not diverge,
	so one cannot expect a collective alignment effect of the type studied in Theorem \ref{theo:theo1},
	and in particular the event of having no inter-block edges has non-vanishing probability. It turns out that this condition alone is not sufficient to enforce alignment: what matters is the combined scale $p(N)N\alpha_N$ of the inter-block
	interaction. The limiting behavior is therefore governed by the same mechanism as
	in Theorem~\ref{theo:theo1}, but with a renormalized effective interaction scale.

	\begin{theorem}\label{theo:theo2}
		Consider the two-block spin model with Hamiltonian $H_{N,\alpha_N, \beta}^{\eps}$ and parameters $\alpha_N>0$ satisfying $\alpha_N\downarrow 0$.
		\begin{enumerate}
			\item If $\lim p(N) N \alpha_N = \infty$, then the distribution of $m^N$ under $\mu_{N,\alpha_N,\beta}^{\eps}$ converges to the limiting distribution given in \eqref{eq:LTalpha0} in the sense of convergence in distribution in probability. \label{thm:diluteCase1}
			\item If $\lim p(N) N \alpha_N = 0$, then the distribution of $m^N$ under $\mu_{N,\alpha_N,\beta}^{\eps}$ converges to the limiting distribution given in \eqref{eq:mixedlimit} again in the sense of convergence in distribution in probability.\label{thm:diluteCase2}
		\end{enumerate}
        If we further assume that $p(N)N \ge C\log N$ for a constant $C>0$ large enough, then the convergences in (\ref{thm:diluteCase1}) and (\ref{thm:diluteCase2}) hold in the sense of convergence in distribution almost surely.
	\end{theorem}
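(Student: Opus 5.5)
We condition on the realization of $(\eps_i)$ and follow the strategy of Theorem~\ref{theo:theo1}. The only change is that the matching is restricted to the random set $E:=\{i\in B_1:\eps_i=1\}$, with $K:=|E|\sim \mathrm{Bin}(N/2,p(N))$.

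\textbf{Reduction to four block magnetizations.} Split each block into the matched part and the unmatched part:
\[
E,\quad E':=E+N/2\subset B_2,\quad B_1\setminus E,\quad B_2\setminus E'.
\]
The Gibbs weight depends on $\sigma$ only through four quantities:
\begin{itemize}
\item the sums $S_1,S_2$ of spins on $B_1\setminus E$ and $B_2\setminus E'$;
\item the number $a$ of pairs $(i,i+N/2)$, $i\in E$, with $(\sigma_i,\sigma_{i+N/2})=(+,+)$;
\item the number $b$ of such pairs with value $(-,-)$.
\end{itemize}
The number of mixed pairs is $K-a-b$. The matching term then equals $\alpha_N(2a+2b-K)$.

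Hence the law of $m^N$ is an explicit multinomial-type sum. This is the same representation as for Theorem~\ref{theo:theo1}, with $N/2$ replaced by $K$ matched pairs plus $N/2-K$ free spins per block.

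\textbf{Large deviations and the restricted free energy.} I would compute the free energy restricted to $m_1\approx x$, $m_2\approx y$. Each matched pair carries a tilted two-spin measure, so optimizing the pair statistics at fixed block magnetizations contributes
\[
K\cdot \Phi_{\alpha_N}(\cdot)=K\bigl(\alpha_N\, t_1t_2+O(\alpha_N^2)\bigr),
\]
where $t_1,t_2$ are the mean spins on $E,E'$. By Stirling, the free spins can adjust so that $t_1\approx x$ and $t_2\approx y$ at entropy cost $o(K\alpha_N)$. The optimal $t$'s deviate from $x,y$ only by $O(\alpha_N)$, which costs $O(K\alpha_N^2)$ in entropy.

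The resulting restricted log-partition function is
\[
N\Bigl(\tfrac{\beta}{8}(x^2+y^2)-\tfrac12 I(x)-\tfrac12 I(y)\Bigr)+K\alpha_N xy+O(\log N)+O(K\alpha_N^2),
\]
where $I$ is the binary entropy rate function.

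\textbf{Comparing the four wells.} By the $\alpha=0$ analysis, the leading term localizes $m^N$ near the four points $(\pm m^+(\beta/2),\pm m^+(\beta/2))$, each with Gaussian fluctuations of order $N^{-1/2}$. The $\log N$ and Gaussian prefactors are identical for all four wells by symmetry. The aligned and anti-aligned wells therefore differ only by the factor
\[
\exp\bigl(2K\alpha_N m^+(\beta/2)^2(1+o(1))\bigr).
\]
On the $O(N^{-1/2})$ windows, the term $K\alpha_N xy$ varies only by $O(K\alpha_N N^{-1/2})=o(K\alpha_N)$, so this factor is exact to leading order.

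Since $K/(Np(N)/2)\to 1$ in probability, because $Np(N)\to\infty$, we have $K\alpha_N\asymp p(N)N\alpha_N$. If this tends to $\infty$, the mixed wells carry vanishing mass and the limit is \eqref{eq:LTalpha0}. If it tends to $0$, all four weights equal $1+o(1)$ and the limit is \eqref{eq:mixedlimit}. Convergence in probability follows because these are statements on the event $\{K\in[\tfrac12,2]\cdot Np(N)/2\}$, whose probability tends to one.

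\textbf{Almost sure version.} If $Np(N)\ge C\log N$, Chernoff gives
\[
\P\bigl(|K-Np(N)/2|>Np(N)/4\bigr)\le 2e^{-cNp(N)}\le 2N^{-cC}.
\]
This is summable for $C$ large, so Borel--Cantelli yields the estimate on $K$ for all large $N$ almost surely.

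\textbf{Main obstacle.} The hardest step is justifying that the matching term enters only through $K\alpha_N m_1m_2$ up to $o(1)$ corrections uniformly near the wells. This needs care especially when $K\alpha_N\to 0$: there the error terms must be $o(1)$, not merely $o(K\alpha_N)$.

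Since $\alpha_N\to 0$, I would instead use the exact identity
\[
e^{\alpha_N\sigma_i\sigma_j}=\cosh\alpha_N\,(1+\sigma_i\sigma_j\tanh\alpha_N).
\]
In case (2), the Gibbs measure is then a perturbation of the product of two Curie--Weiss measures by the density
\[
\prod_{i\in E}(1+\tanh\alpha_N\,\sigma_i\sigma_{i+N/2}).
\]
This density has expectation $1+o(1)$ and is $1+o(1)$ in $L^2$ when $K\alpha_N\to 0$, using exchangeability within blocks.

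In case (1), I would lower bound the aligned-to-mixed mass ratio by directly restricting to configurations whose matched spins follow the block magnetizations.
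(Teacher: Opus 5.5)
Your argument is correct in outline and follows the paper's skeleton. You condition on the block magnetizations and flip all spins in $B_2$, which maps $\Omega_N(x,y)$ onto $\Omega_N(x,-y)$, preserves the Curie--Weiss weight and sends $S_\varepsilon\mapsto -S_\varepsilon$. Only the matching term then separates aligned from anti-aligned wells; you identify the scale $K\alpha_N\asymp p(N)N\alpha_N$ and control $K$ by Chernoff and Borel--Cantelli. The key estimate is obtained differently.

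The paper localizes $m^N$ in the shrinking windows $A_\kappa$, which is why Proposition~\ref{prop:atypical_mu_thinned} assumes $\alpha_Np(N)=N^{-\rho}$. It then shows, under the uniform measure $\nu_N^\mu$ on $\Omega_N(\mu)$, that $S_\varepsilon/M_N$ is within $\delta$ of $\mu_1\mu_2$ with probability $1-e^{-c_\delta M_N}$. This uses Lemma~\ref{lem:cross_conc} for the full matching plus Hoeffding--Serfling on the retained edges, and gives $\log\mathbb E_{\nu_N^\mu}[e^{\alpha_N S_\varepsilon}]=\alpha_NM_N\mu_1\mu_2+o(\alpha_NM_N)$.

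You instead evaluate the restricted free energy through the tilted pair measure, with the sharper error $O(K\alpha_N^2)$. Your argument only needs localization in fixed $\eta$-neighbourhoods, not the $N^{-1/2}$ windows you mention. That localization follows from $|\alpha_N S_\varepsilon|\le\alpha_NK=o(N)$ for any $\alpha_N\to0$, so your route covers the theorem under its stated hypothesis, without the polynomial rate. The paper's route is more elementary. A fixed-$\delta$ large-deviation bound suffices because $\alpha_N\to0$ makes $e^{\alpha_NM_N-c_\delta M_N}$ negligible, and no optimization over pair statistics or re-balancing of free spins is needed.

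Three points to tighten.

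First, the weight is not a function of $S_1,S_2,a,b$ alone. The magnetizations $m_1,m_2$ also involve the difference of the $(+,-)$ and $(-,+)$ counts; your later use of $t_1,t_2$ repairs this.

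Second, case (2) needs no $L^2$ computation. Since $|\alpha_N S_\varepsilon|\le\alpha_NK\to0$, the quenched measure is within a factor $e^{\pm2\alpha_NK}$ of the decoupled one. Your worry there is misplaced, because an $o(K\alpha_N)$ error is already $o(1)$ when $K\alpha_N\to0$.

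Third, in case (1), restricting to configurations whose matched spins follow the block magnetizations only bounds the aligned mass from below. You also need an upper bound on the anti-aligned mass. Use the exact identity $\mu^\varepsilon_{N,\alpha_N,\beta}(m^N=(x,y))/\mu^\varepsilon_{N,\alpha_N,\beta}(m^N=(x,-y))=\mathbb E_\nu[e^{\alpha_NS_\varepsilon}]/\mathbb E_\nu[e^{-\alpha_NS_\varepsilon}]$, with $\nu$ uniform on $\Omega_N(x,y)$. This identity also removes any concern about non-cancelling $O(\log N)$ Stirling terms. Since $\mathbb E_\nu S_\varepsilon=Kxy$ exactly, Jensen gives a numerator at least $e^{\alpha_NKxy}$. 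The denominator requires concentration of $S_\varepsilon$ under $\nu$ at scale $K$: either the paper's Hoeffding--Serfling step, or a sub-Gaussian bound with variance proxy $O(K)$. A proxy of order $N$ would only give an error $O(N\alpha_N^2)$, which is not $o(K\alpha_N)$ unless $\alpha_N=o(p(N))$.
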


    \subsection{A 3-block model}
    In this section we consider a version of a bottleneck spin model that is based on a block spin model with three blocks. More precisely, we consider a block spin model where the number of blocks is equal to three. We want to have two "main" blocks and between them a much smaller block interacting with both of them and serving as a bottleneck. Formally, denote the blocks by $B_1,B_2$ and $B_3$ and by $|B_k|$ denote the size of the $k$'th block. Assume that $|B_2| =b_N$ for a non-decreasing sequence $(b_N)_N\subset \N$ with $b_N \to \infty$ but $\frac{b_N}{N} \to 0$. Let $|B_1|= |B_3| = \frac{N-b_N}{2}$. We consider on each block a mean field interaction at inverse temperature $\beta >1$ and between the first and second block as well as between the third and second block at strength $\alpha_N$. (Note that we choose $\beta >1$ since due to the different normalization in the Hamiltonian defined below, the critical temperature in the case $\alpha_N\equiv 0$ is $1$ instead of $2$.)

More precisely, define the Hamiltonian as
\begin{flalign*}
	H_{N, \beta, \alpha_N} (\sigma)&= -\sum_{k=1}^3 \frac{\beta}{2}\frac{1}{|B_k|}\sum_{i,j\in B_k} \sigma_i\sigma_j - \alpha_N\frac{1}{\sqrt{|B_1||B_2|}} \sum_{\substack{i \in B_1 \\ j\in B_2}} \sigma_i\sigma_j - \alpha_N\frac{1}{\sqrt{|B_1||B_2|}} \sum_{\substack{i \in B_3 \\ j\in B_2}} \sigma_i\sigma_j \\
	&= -\sum_{k=1}^3\frac{\beta}{2}|B_k|m_k^2 - \alpha_N\sqrt{|B_1||B_2|}m_1m_2 - \alpha_N\sqrt{|B_3||B_2|}m_3m_2, 
\end{flalign*}
where $m_k=\frac{1}{|B_k|}\sum_{i\in B_k} \sigma_i$ is the average magnetization of the $k$'th block. In particular note that the Hamiltonian is a function of the magnetization, or in other words, all spin configurations that lead to the same magnetization vector have the same energy. Write $H_{N,\beta,\alpha_N}(m)$. Then the Gibbs measure on the spin configurations is given by
$$
\mu_{N,\beta,\alpha_N}( \sigma) = \frac{1}{Z_{N, \beta, \alpha_N}}\exp\{ -H_{N,\beta,\alpha_N}(\sigma)\}\frac{1}{2^N}
$$
and in terms of the magnetization
$$
\mu_{N,\beta, \alpha_N}( m) =  \frac{1}{Z_{N, \beta, \alpha_N}} \exp\{ -H_{N,\beta,\alpha_N}(m)\}\frac{1}{2^N} \prod_{k=1}^3 \binom{|B_k|}{|B_k|\frac{1+m_k}{2}},
$$
where
$$
Z_{N, \beta, \alpha_N} = \sum_{m_1,m_2,m_3}  \exp\{ -H_{N,\beta,\alpha_N}(m)\}\frac{1}{2^N} \prod_{k=1}^3 \binom{|B_k|}{|B_k|\frac{1+m_k}{2}}
$$
denotes the partition function of this model, and the factor $ \prod_{k=1}^3 \binom{|B_k|}{|B_k|\frac{1+m_k}{2}}$ is the number of spin configurations associated to the realization of the magnetization vector $(m_1,m_2,m_3)$.
Similarly to Sections \ref{sec:PerfectMatch} and \ref{sec:DiluteMatch}, there is a threshold for the interaction strength that determines whether or not the (three) blocks align in the thermodynamic limit. In between these two extreme cases, both alignment and anti-alignment have non-zero probability. In this model however, there is a second threshold for the interaction strength, that determines whether or not the magnetization of the small block $m_2$ is influenced by its larger neighbors. This influence is felt in the sense of an external magnetic field. Let us denote by $m^* = m^*(\beta)$, the largest solution to the Curie-Weiss equation
\begin{equation}\label{eq:CW}
    x = \tanh( \beta x).
\end{equation}
Further, for any $c \in [0, \infty)$, let us denote by $m(c)$, the largest solution to the equation
\begin{equation}
    x = \tanh( \beta x + \sqrt{2}cm^* ), \label{eq:CWMagnField}
\end{equation}
and define $m(\infty) := 1$. Note that $-m(c)=m(-c)$ is the smallest solution to $x = \tanh( \beta x - \sqrt{2}cm^* )$. In other words, $m(c)$ is the limiting magnetization of a standard Curie-Weiss model with external magnetic field $h= \sqrt{2}cm^*$. Further note that for $c=0$, we recover the equation in \eqref{eq:CW}, i.e.\ the case with no external magnetic field.

\begin{theorem}\label{thm:Blockspin}
	In the three block model described above with $\alpha_N >0$ such that $\alpha_N \downarrow 0$, the following convergences hold:
	\begin{enumerate}
		\item If $\lim_{N\to \infty} \alpha_N\sqrt{\frac{N}{b_N}} = \infty $, then the distribution of $m$ under the Gibbs measure converges weakly to 
		$$\frac{1}{2}\left(\delta_{(m^*,1,m^*)} + \delta_{(-m^*,-1,-m^*)} \right).$$ \label{thm:Blockspin1}
		\item If $\lim_{N\to \infty} \alpha_N\sqrt{\frac{N}{b_N}} = 0 $, but $\lim_{N\to \infty}\alpha_N\sqrt{b_N N}=\infty$, then the distribution of $m$ under the Gibbs measure converges weakly to 
		$$\frac{1}{2}\left(\delta_{(m^*,m^*,m^*)} + \delta_{(-m^*,-m^*,-m^*)} \right).$$ \label{thm:Blockspin2}
		\item If $\lim_{N\to \infty} \alpha_N\sqrt{b_N N} = 0 $,  then the distribution of $m$ under the Gibbs measure converges weakly to 
		$$\frac{1}{8}\sum_{\chi_1,\chi_2, \chi_3 \in \{-1,+1\}} \delta_{(\chi_1 m^*,\chi_2 m^*,\chi_3m^*)}.$$ \label{thm:Blockspin3}
	\end{enumerate}
\end{theorem}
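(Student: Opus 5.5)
We work entirely on the level of the magnetization vector $m=(m_1,m_2,m_3)$. Stirling's formula applied to the binomial coefficients gives
\[
\mu_{N,\beta,\alpha_N}(m)\propto \exp\Bigl(-\sum_{k=1}^3 |B_k|\bigl(I(m_k)-\tfrac{\beta}{2}m_k^2\bigr)+\alpha_N\sqrt{|B_1||B_2|}\,m_1m_2+\alpha_N\sqrt{|B_3||B_2|}\,m_3m_2+O(\log N)\Bigr),
\]
where $I(x)=\frac{1+x}{2}\log(1+x)+\frac{1-x}{2}\log(1-x)$. Write $F(x)=I(x)-\frac{\beta}{2}x^2$; its minimizers are $\pm m^*$. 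Since $\sqrt{|B_1||B_2|}\approx\sqrt{Nb_N/2}$, the coupling terms have total size $\asymp \alpha_N\sqrt{Nb_N}$. They are therefore a perturbation of order $\alpha_N\sqrt{b_N/N}\to0$ per spin for the big blocks, and of order $\alpha_N\sqrt{N/b_N}$ per spin for the small block $B_2$.

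\textbf{Step 1: big blocks.} Since $|B_1|F(m_1)$ dominates the coupling, a standard large deviation argument (compare the Gibbs weight of a neighborhood of $\{\pm m^*\}$ with its complement, using $|B_1|\to\infty$ and a coupling of order $o(|B_1|)$) shows that $m_1,m_3\to\{\pm m^*\}$ in probability in all three regimes.

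\textbf{Step 2: conditioned on $m_1=\chi_1 m^*$ and $m_3=\chi_3 m^*$, analyze $m_2$.} Its effective exponent is
\[
-b_N\Bigl(F(m_2)-h_N m_2\Bigr),\qquad h_N=\alpha_N\sqrt{\tfrac{N}{2b_N}}\,(\chi_1+\chi_3)m^*\,(1+o(1)).
\]
\begin{itemize}
\item In regime (1), $h_N\to\pm\infty$ whenever $\chi_1=\chi_3$. The minimizer of $F-h_Nm_2$ then tends to $\operatorname{sign}(h_N)$, since $I'$ blows up at $\pm1$; this gives $m_2\to\pm1$.
\item In regimes (2) and (3), $h_N\to0$. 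The field then only selects between the two wells $\pm m^*$, through the energy difference $2b_Nh_Nm^*\asymp\alpha_N\sqrt{Nb_N}$. This quantity tends to infinity in (2), forcing $\chi_2=\chi_1=\chi_3$, and to $0$ in (3), where both signs are equally likely.
\end{itemize}

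\textbf{Step 3: compare the global weights of the sign patterns $(\chi_1,\chi_2,\chi_3)$.}
\begin{itemize}
\item In regime (3) all inter-block terms are $o(1)$ uniformly, so the eight wells carry asymptotically equal mass by symmetry of $F$. One should also check that the Gaussian fluctuation prefactors agree, which holds since all wells have the same Hessian.
\item In regimes (1) and (2), the aligned patterns gain $\exp(c\,\alpha_N\sqrt{Nb_N})\to\infty$ over any misaligned pattern. In regime (1) this gain is even larger, of order $b_N\,\Phi(h_N)$ where $\Phi$ is the Legendre-type free energy gain. For $\chi_1=-\chi_3$ the field on $B_2$ cancels, and the pattern loses the coupling energy $\asymp\alpha_N\sqrt{Nb_N}$ relative to the aligned ones. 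Hence only the two fully aligned states survive, each with weight $1/2$ by the global spin-flip symmetry.
\end{itemize}

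\textbf{Main obstacle.} The hardest part is making Step 1 rigorous uniformly in regime (1). There the coupling is large for $B_2$, and one must verify that it neither shifts $m_1$ away from $m^*$ nor destroys the comparison of the prefactors. I would handle this by an explicit bound: maximize over $m_2\in[-1,1]$ first, which bounds the coupling by $\alpha_N\sqrt{|B_1||B_2|}\,|m_1|$, and this is $o(|B_1|)$. In addition, the local fluctuations $m_1-\chi_1m^*$ are of order $|B_1|^{-1/2}$. The cross term $\alpha_N\sqrt{Nb_N}\,(m_1-\chi_1m^*)$ is then of order $\alpha_N\sqrt{b_N}$, which is $o(b_N)$ and does not affect which well $m_2$ selects.
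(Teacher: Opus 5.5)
Your proposal is correct and is essentially the paper's argument. The ingredients match step for step: $m_1,m_3$ concentrate at $\pm m^*$ because the total coupling $2\alpha_N\sqrt{b_Nn_N}$ is $o(n_N)$ (Lemma~\ref{lemma:m1}); $B_2$ then behaves like a Curie--Weiss model in a field of order $\alpha_N\sqrt{N/b_N}$; the sign patterns are separated by coupling energies of order $\alpha_N\sqrt{Nb_N}$ (Lemmas~\ref{lemma:m1m3Regime1and2} and~\ref{lem:samesign}); and global spin-flip symmetry fixes the weights $\tfrac12$.

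The one place to tighten is the conditioning on $m_1=\chi_1m^*$, $m_3=\chi_3m^*$. Your claim that $m_1-\chi_1m^*$ is of size $|B_1|^{-1/2}$ is not accurate under the coupled measure, where $m_1$ is shifted by order $\alpha_N\sqrt{b_N/N}$, which can be larger. Instead, do as the paper does and work on $\eps$-windows: on the anti-aligned event use $|m_1+m_3|\le 2\eps$, and compare against the lower bound $2m^*(m^*-\eps)\alpha_N\sqrt{b_Nn_N}$ on the aligned event with $m_2\ge m^*$, so that the $m_1$- and $m_3$-sums cancel by symmetry.
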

The results in Theorem \ref{thm:Blockspin} describe the behavior of the magnetization in the extreme cases $\alpha_N\sqrt{\frac{N}{b_N}} \longrightarrow c \in \{0,\infty\}$, $\alpha_N\sqrt{b_N N} \longrightarrow C \in \{ 0,\infty\}$. The results for the intermediate regimes are captured in:
\begin{theorem}\label{thm:PhaseTransition}
In the three block model described above with $\alpha_N >0$ such that $\alpha_N \downarrow 0$, the following convergences hold:
\begin{enumerate}
    \item If $\lim_{N\to \infty} \alpha_N\sqrt{\frac{N}{b_N}} = c$ for some constant $c\in (0,\infty)$, then the distribution of $m$ under the Gibbs measure converges weakly to 
    $$\frac{1}{2}\left(\delta_{(m^*,m(c),m^*)} + \delta_{(-m^*,-m(c),-m^*)} \right).$$ \label{thm:Phasetrans1}
    \item If  $\lim_{N\to \infty}\alpha_N\sqrt{b_N N}=C$ for some constant $C \in (0,\infty)$, then the distribution of $m$ under the Gibbs measure converges weakly to
    $$\sum_{\chi_1,\chi_2, \chi_3 \in \{-1,+1\}} a(\chi_1,\chi_2,\chi_3,C)\delta_{(\chi_1 m^*,\chi_2 m^*,\chi_3m^*)},$$ \label{thm:Phasetrans2}
    where $a(\chi_1,\chi_2,\chi_3,C)$ are weight depending on $C$, that are given by
    \begin{align*}
        & a(\chi_1,\chi_2,\chi_3,C) = \frac{1}{2\left(  1+ e^{-\sqrt{2}C(m^*)^2} \right)^2}\quad \text{ if } \chi_1=\chi_2=\chi_3 \\
        &  a(\chi_1,\chi_2,\chi_3,C) = \frac{e^{-2\sqrt{2}C(m^*)^2}}{2\left(  1+ e^{-\sqrt{2}C(m^*)^2} \right)^2}\quad \text{ if } \chi_1=\chi_3=- \chi_2 \\
        &  a(\chi_1,\chi_2,\chi_3,C) = \frac{e^{-\sqrt{2}C(m^*)^2}}{2\left(  1+ e^{-\sqrt{2}C(m^*)^2} \right)^2}\quad \text{ if } \chi_1=\pm \chi_2=- \chi_3
    \end{align*}
    \end{enumerate}
\end{theorem}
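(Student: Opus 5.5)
The plan is to work directly with the law of the magnetization vector $m=(m_1,m_2,m_3)$ given in the display before Theorem \ref{thm:Blockspin}. I would use Stirling's formula $\binom{n}{n\frac{1+x}{2}}2^{-n}=\exp\{-nI(x)+O(\log n)\}$, with $I$ the binary relative entropy, together with the fact that each block is a Curie--Weiss model at inverse temperature $\beta>1$. Note that $\alpha_N\sqrt{|B_1||B_2|}=\alpha_N\sqrt{b_N N/2}\,(1+o(1))$. This single quantity decides both regimes: it is bounded in part (\ref{thm:Phasetrans2}) and of order $b_N$ in part (\ref{thm:Phasetrans1}).

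\emph{Part (\ref{thm:Phasetrans2}).} Here $\alpha_N\sqrt{|B_1||B_2|}\to C/\sqrt2$. Write $\mu_{N,\beta,\alpha_N}(m)\propto \nu_N(m)\exp\{f_N(m)\}$, where $\nu_N$ is the product of three independent Curie--Weiss laws on the blocks and $f_N(m)=\alpha_N\sqrt{|B_1||B_2|}\,m_2(m_1+m_3)$.
\begin{itemize}
\item By the classical low-temperature result for each block (equivalently Theorem \ref{thm:Blockspin}(\ref{thm:Blockspin3}) with $\alpha_N\equiv 0$), $\nu_N$ converges weakly to $\frac18\sum_\chi\delta_{(\chi_1m^*,\chi_2m^*,\chi_3m^*)}$.
\item $f_N$ converges uniformly on $[-1,1]^3$ to the bounded continuous function $f(m)=\frac{C}{\sqrt2}m_2(m_1+m_3)$. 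Hence for every bounded continuous $g$ we have $\int g e^{f_N}d\nu_N/\int e^{f_N}d\nu_N\to \int g e^{f}d\nu/\int e^{f}d\nu$.
\end{itemize}
Evaluating $e^{f}$ at the eight points gives three values. Aligned patterns get $e^{\sqrt2C(m^*)^2}$, the pattern $\chi_1=\chi_3=-\chi_2$ gets $e^{-\sqrt2C(m^*)^2}$, and the mixed patterns get $1$. The total weight is $2e^{\sqrt2C(m^*)^2}(1+e^{-\sqrt2C(m^*)^2})^2$, and normalizing gives exactly the weights $a(\chi_1,\chi_2,\chi_3,C)$.

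\emph{Part (\ref{thm:Phasetrans1}).} Here the coupling $\alpha_N\sqrt{|B_1||B_2|}=b_N\frac{c}{\sqrt2}(1+o(1))$ is $o(N)$, so it is invisible on the scale of blocks $1$ and $3$ but of full order on the scale of block $2$. I would proceed in three steps.
\begin{enumerate}
\item \textbf{Localize blocks 1 and 3.} Since the perturbation of the exponent is $O(b_N)=o(N)$, the large deviation principle at speed $N$ for $(m_1,m_3)$ is unchanged. Moreover, by the usual quadratic expansion of $\frac\beta2x^2-I(x)$ at $\pm m^*$, $(m_1,m_3)$ lies with overwhelming probability in $N^{-1/2}\log N$-neighbourhoods of the four points $(\chi_1m^*,\chi_3m^*)$.
\item \textbf{Treat block 2 as Curie--Weiss with a field.} On such a neighbourhood, the exponent contributed by block 2 is $b_N\bigl[\frac\beta2 m_2^2+h_N m_2-I(m_2)\bigr]$ with $h_N=\frac{c}{\sqrt2}(m_1+m_3)(1+o(1))=\frac{c}{\sqrt2}(\chi_1+\chi_3)m^*+o(1)$. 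The error in $b_N h_N$ coming from fluctuations of $m_1,m_3$ is $O(b_N N^{-1/2}\log N)=o(b_N)$. Summing over $m_2$ gives a factor $\exp\{b_N\phi(h)+o(b_N)\}$, where $\phi(h)=\sup_x[\frac\beta2x^2+hx-I(x)]$.
\item \textbf{Compare the four sign patterns.} $\phi(\pm\sqrt2cm^*)>\phi(0)$ strictly, and the block-1 and block-3 factors are identical for all four patterns by symmetry. Hence the patterns with $\chi_1=\chi_3$ dominate those with $\chi_1\neq\chi_3$ by a factor $e^{\kappa b_N}\to\infty$, and the two aligned patterns have equal weight by the global spin flip. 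Given $\chi_1=\chi_3=+1$, block 2 is a Curie--Weiss model of size $b_N\to\infty$ with field tending to $\sqrt2cm^*>0$, so $m_2\to m(c)$, the unique global maximizer. This yields $\frac12(\delta_{(m^*,m(c),m^*)}+\delta_{(-m^*,-m(c),-m^*)})$.
\end{enumerate}

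The main obstacle is step 2: making the $o(b_N)$ error bounds uniform. The coupling acts at speed $b_N$ while $m_1,m_3$ fluctuate at scale $N^{-1/2}$, so the conditional law of $m_2$ must be controlled uniformly over $(m_1,m_3)$ in the shrinking neighbourhoods. I would first try sandwiching: bound the partition function restricted to each sign pattern above and below by the Curie--Weiss partition functions with fields $h_\pm=\frac{c}{\sqrt2}(\chi_1+\chi_3)m^*\pm\delta$. I would then use continuity of $\phi$ and of the maximizer $m(\cdot)$ in the field, and let $\delta\downarrow0$ after $N\to\infty$.
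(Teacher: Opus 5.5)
Your proposal is correct. For part (2) you take a genuinely different and shorter route than the paper. The paper first localizes all three magnetizations in shrinking windows (Lemma \ref{lemma:m2regime2}). It then compares the sign wells pairwise by bounding $\alpha_N\sqrt{b_N(N-b_N)/2}\,m_2(m_1+m_3)$ above and below on each well, cancels the decoupled Curie--Weiss sums by the symmetry $m\mapsto -m$, and fixes $a(1,1,1,C)$ by normalization. You instead write the Gibbs law of $m$ as the decoupled product law $\nu_N$ tilted by $e^{f_N}$, with $f_N\to f$ uniformly on $[-1,1]^3$ and $f$ bounded and continuous. Weak convergence of $\nu_N$ then passes directly to the tilted laws, with no localization rates needed, and evaluating $e^{f}$ at the eight points gives all weights and the normalization at once; your arithmetic checks out. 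The paper's route buys explicit finite-$N$ two-sided ratio bounds; yours yields only the limit, which is all the theorem asserts.

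For part (1) your three steps follow the paper's skeleton (Lemmas \ref{lemma:m1}, \ref{lemma:m1m3Regime1and2}, \ref{lemma:c<infty}, \ref{lem:samesign}). The difference is that you compare sign patterns through the free energy $\phi(h)$. You also get both the sign and the value of $m_2$ at once from uniqueness of the global maximizer of $\frac\beta2x^2+hx-I(x)$ for $h>0$. This avoids a weak spot in the paper's proof of Lemma \ref{lem:samesign}, which asserts that the field-free Curie--Weiss law gives mass tending to $\frac12$ to $[m(c)-\delta,m(c)+\delta]$. For $c>0$ that fails once $\delta<m(c)-m^*$, and one must compare two exponential rates in $b_N$, which is exactly what your $\phi$-comparison does.

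One correction to your step 1. The claim that $(m_1,m_3)$ lies with overwhelming probability in $N^{-1/2}\log N$-neighbourhoods of $(\chi_1m^*,\chi_3m^*)$ is false in general. Blocks 1 and 3 feel from block 2 an effective per-spin field of order $\sqrt2c\,m_2\,b_N/N$, which shifts their magnetizations by order $b_N/N$. That exceeds $N^{-1/2}\log N$ whenever $b_N\gg\sqrt N\log N$. Nothing in your argument needs that rate: you only need $b_N\,|m_1+m_3-(\chi_1+\chi_3)m^*|\le\delta b_N$ on the windows. So use fixed $\eps$-windows as in Lemma \ref{lemma:m1}; their complements have mass at most $e^{-n_N(c_\beta\eps^2+o(1))}$, since the coupling is $O(b_N)=o(N)$. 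Then let $\eps,\delta\downarrow0$ after $N\to\infty$, which is precisely the sandwich you propose at the end.
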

\begin{remark}
    Note that in both results in Theorem \ref{thm:PhaseTransition}, if one considers $C \to 0$ or $C \to \infty$, and $c \to 0$ or $c \to \infty$ respectively, one recovers the results in Theorem \ref{thm:Blockspin}.
\end{remark}

	\section{Proof of Theorem \ref{theo:theo1}}
	
	The proof proceeds in three main steps:
	
	First, we show that under the Gibbs measure the magnetization vector
	$m^N=(m_1,m_2)$ concentrates on a small neighborhood $A_\kappa$ of the four
	low-temperature Curie--Weiss equilibria
	$(\pm m^*(\beta/2),\pm m^*(\beta/2))$. This is achieved by combining moderate
	deviation estimates for the Curie--Weiss model with a robustness argument
	showing that the weak inter-block interaction does not destroy this
	concentration provided $2\kappa<\rho$.
	
	Second, we analyze the inter-block interaction conditional on a fixed
	magnetization $(\mu_1,\mu_2)\in A_\kappa$. Conditioning on the magnetization,
	the Curie--Weiss part of the Hamiltonian becomes constant and the remaining
	randomness concerns the perfect matching between the two blocks. A large
	deviation analysis of this matching shows that, with overwhelming probability,
	the cross-block interaction term is asymptotically equal to
	$\frac N2\,\mu_1\mu_2$.
	
	Finally, we compare the Gibbs weights of the four wells in $A_\kappa$.
	While the Curie--Weiss contributions are asymptotically identical for all
	four sign combinations, the inter-block term favors aligned magnetizations
	$\mu_1\mu_2>0$ over anti-aligned ones $\mu_1\mu_2<0$. Depending on whether
	$N\alpha_N\to\infty$ or $N\alpha_N\to0$, this bias either dominates or vanishes,
	leading respectively to a two-point or a four-point limiting distribution for
	$m^N$.
	
	\medskip
	We begin the formal proof of Theorem~\ref{theo:theo1} by identifying regions in $[-1,1]^2$ that cannot arise as limit points of the magnetization vector $m^N$ under the assumptions of the theorem. To this end, for $0<\kappa<\frac12$, $\beta>2$, and $N\in 2\N$, we define the sets
	\begin{flalign} \label{eq:Agamma}
	A_\kappa:=A_\kappa^N  \nonumber
    :=\Bigg\{\mu\in [-1,1]^2: \mu \in \bigcup_{s_1,s_2 \in\{-,+\}}
	&\left(m^{s_1}\left(\frac \beta2 \right)-N^{-\kappa},\,m^{s_1}\left(\frac \beta 2\right)+N^{-\kappa}\right) \\
    &\quad \times\left(m^{s_2}\left(\frac \beta2 \right)-N^{-\kappa},\,m^{s_2}\left(\frac \beta 2\right)+N^{-\kappa}\right) \Bigg\}
	\end{flalign}

	\begin{proposition}\label{prop:atypical_mu}
		If $\alpha_N = N^{-\rho}\to 0$ for some $\rho>0$, then for the set $A_\kappa^N$ defined above,
		\[
		\mu_{N,\alpha_N,\beta}\bigl((A_\kappa^N)^c\bigr)\to 0
		\]
		provided that $2\kappa<\rho$.
	\end{proposition}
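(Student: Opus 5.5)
The approach is a comparison between $\mu_{N,\alpha_N,\beta}$ and the decoupled two-block Curie--Weiss measure $\mu_{N,0,\beta}$, which is a product of two independent Curie--Weiss measures on $N/2$ spins at effective inverse temperature $\beta/2>1$. The inter-block term satisfies
$$\Bigl|\alpha_N \sum_{i\in B_1}\sigma_i\sigma_{i+N/2}\Bigr|\le \alpha_N \frac N2 = \frac12 N^{1-\rho}$$
uniformly in $\sigma$. Hence for any event $E$,
$$\mu_{N,\alpha_N,\beta}(E)\le e^{N^{1-\rho}}\,\mu_{N,0,\beta}(E),$$
because the numerator gains at most a factor $e^{N^{1-\rho}/2}$ and the partition function loses at most a factor $e^{-N^{1-\rho}/2}$. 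Applied with $E=(A_\kappa^N)^c$, this reduces the proposition to showing
$$\mu_{N,0,\beta}\bigl((A_\kappa^N)^c\bigr)\le \exp\bigl(-cN^{1-2\kappa}\bigr)$$
for some $c>0$. Since $2\kappa<\rho$ gives $N^{1-2\kappa}\gg N^{1-\rho}$, the product then tends to $0$.

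For the decoupled measure, a union bound reduces the estimate to a single Curie--Weiss block of size $n=N/2$. It then suffices to bound the probability that its magnetization $m$ lies at distance at least $N^{-\kappa}$ from $\{\pm m^+(\beta/2)\}$. I would write
$$\mu(m=x)\propto \binom{n}{n\frac{1+x}{2}}e^{\frac{\beta N}{8}x^2}$$
and use Stirling's formula to get $\mu(m=x)=Z^{-1}\exp\bigl(-n\,\Phi(x)+O(\log n)\bigr)$, where
$$\Phi(x)=I(x)-\tfrac{\beta}{4}x^2,\qquad I(x)=\tfrac{1+x}{2}\log(1+x)+\tfrac{1-x}{2}\log(1-x).$$
The function $\Phi$ has its global minima exactly at $\pm m^+(\beta/2)$, with $\Phi''(\pm m^+)>0$ since $\beta/2>1$. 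Compactness together with this nondegeneracy gives a quadratic lower bound
$$\Phi(x)-\min\Phi\ge c\,\mathrm{dist}\bigl(x,\{\pm m^+\}\bigr)^2.$$
On the complement of the $N^{-\kappa}$-neighbourhoods this yields weights of size $\exp(-cnN^{-2\kappa}+O(\log n))$. Summing over the at most $n+1$ possible values of $x$, and bounding $Z$ from below by the single term nearest to $m^+$ (which is within $O(1/n)$ of the minimum), gives the bound $\exp(-c'N^{1-2\kappa})$, since $\kappa<1/2$ makes $N^{1-2\kappa}\gg\log N$. This is the moderate deviation estimate for the Curie--Weiss model at low temperature.

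The main obstacle is making the local quadratic bound uniform down to distances $N^{-\kappa}$, which requires that the polynomial Stirling corrections and the lattice effects not swamp $nN^{-2\kappa}$. They do not, because $\kappa<1/2$. With the estimate in hand, the comparison inequality finishes the proof: the $N^{1-\rho}$ loss from the bottleneck is negligible against $N^{1-2\kappa}$ exactly when $2\kappa<\rho$.
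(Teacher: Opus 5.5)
Your proof is correct and follows the paper's argument. Both use the uniform bound $|H_{N,\alpha_N,\beta}-H_{N,0,\beta}|\le N\alpha_N/2$ to get $\mu_{N,\alpha_N,\beta}(E)\le e^{N^{1-\rho}}\mu_{N,0,\beta}(E)$, and combine it with an $\exp(-cN^{1-2\kappa})$ moderate-deviation bound for the decoupled two-block Curie--Weiss measure. The only difference is that you derive that moderate-deviation estimate yourself, via Stirling and the nondegenerate quadratic lower bound on $\Phi$ near $\pm m^+(\beta/2)$, with $\kappa<1/2$ absorbing the polynomial corrections; the paper simply cites it from the literature.
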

	
	\begin{proof}
			We first consider the decoupled case $\alpha_N \equiv 0$. In that case, the
			Hamiltonian splits into two independent Curie--Weiss Hamiltonians on the blocks
			$B_1$ and $B_2$, each with $N/2$ spins and effective inverse temperature
			$\beta/2>1$. Hence, by the standard moderate deviation principle for the
			low-temperature Curie--Weiss model (see, e.g., \cite{EL04}), there exist
			constants $c,C>0$ such that
			\begin{multline*}
			\mu_{N,0,\beta}\Bigl(\bigl|m_i-m^+(\beta/2)\bigr|>N^{-\kappa},
			\ m_i\ge 0\Bigr)
			+
			\mu_{N,0,\beta}\Bigl(\bigl|m_i-m^-(\beta/2)\bigr|>N^{-\kappa},
			\ m_i\le 0\Bigr)
		\\	\le C e^{-c N^{1-2\kappa}}
			\end{multline*}
			for $i=1,2$ and all $N$ sufficiently large.
			
			Since each block magnetization takes at most $N/2+1$ values, a union bound
			yields
			\begin{equation}\label{eq:MDP_decoupled}
				\mu_{N,0,\beta}\bigl((A_\kappa^N)^c\bigr)
				\le C' (N+1)^2 e^{-c N^{1-2\kappa}}
			\end{equation}
			for some constant $C'>0$.
			
			We now compare the coupled measure $\mu_{N,\alpha_N,\beta}$ with the decoupled
			measure $\mu_{N,0,\beta}$. For every configuration $\sigma\in\{-1,+1\}^N$,
			\[
			\left|H_{N,\alpha_N,\beta}(\sigma)-H_{N,0,\beta}(\sigma)\right|
			=
			\alpha_N\left|\sum_{i\in B_1}\sigma_i\sigma_{i+N/2}\right|
			\le \frac N2 \alpha_N.
			\]
			Therefore,
			\[
			e^{-N\alpha_N/2} e^{-H_{N,0,\beta}(\sigma)}
			\le
			e^{-H_{N,\alpha_N,\beta}(\sigma)}
			\le
			e^{N\alpha_N/2} e^{-H_{N,0,\beta}(\sigma)}.
			\]
			Summing over all $\sigma$ gives
			\[
			e^{-N\alpha_N/2} Z_{N,0,\beta}
			\le
			Z_{N,\alpha_N,\beta}
			\le
			e^{N\alpha_N/2} Z_{N,0,\beta}.
			\]
			Hence, for every event $E\subset\{-1,+1\}^N$,
			\[
			\mu_{N,\alpha_N,\beta}(E)
			=
			\frac{\sum_{\sigma\in E} e^{-H_{N,\alpha_N,\beta}(\sigma)}}{Z_{N,\alpha_N,\beta}}
			\le
			e^{N\alpha_N}\,
			\frac{\sum_{\sigma\in E} e^{-H_{N,0,\beta}(\sigma)}}{Z_{N,0,\beta}}
			=
			e^{N\alpha_N}\mu_{N,0,\beta}(E).
			\]
			Applying this with $E=(A_\kappa^N)^c$ and using \eqref{eq:MDP_decoupled}, we obtain
			\[
			\mu_{N,\alpha_N,\beta}\bigl((A_\kappa^N)^c\bigr)
			\le
			C'(N+1)^2
			\exp\bigl(-cN^{1-2\kappa}+N\alpha_N\bigr).
			\]
			Since $\alpha_N=N^{-\rho}$, we have $N\alpha_N=N^{1-\rho}$, and the assumption
			$2\kappa<\rho$ implies
			\[
			N^{1-\rho}=o\bigl(N^{1-2\kappa}\bigr).
			\]
			Therefore, the right-hand side converges to $0$ as $N\to\infty$, proving the
			claim.
\end{proof}		
	
	From now on, we fix $\kappa$ such that $2\kappa<\rho$ and refer to the elements of the set $A_\kappa$ defined in \eqref{eq:Agamma} as \emph{typical points}. For a given $N$, a typical element $(\mu_1,\mu_2)\in A_\kappa$ is called \emph{admissible} if both $N(1+\mu_1)/4$ and $N(1+\mu_2)/4$ are natural numbers. This condition ensures that configurations exist with exactly $N(1+\mu_1)/4$ and $N(1+\mu_2)/4$ plus spins in blocks $B_1$ and $B_2$, respectively, yielding $m_1=\mu_1$ and $m_2=\mu_2$. In order to investigate whether the two blocks align in the thermodynamic limit or not, we need a more thorough analysis of the inter-block interaction term. To that end, we condition on a block magnetization $(\mu_1,\mu_2)\in A_\kappa$. By computing the typical number of matched spin pairs that are aligned/anti-aligned we show:

    	\begin{lemma}\label{lem:cross_conc}
		Fix typical and admissible $(\mu_1,\mu_2)\in A_\kappa$, and let
		$\nu_N^{\mu_1,\mu_2}$ denote the uniform probability measure on
		\[
		\Omega_N(\mu_1,\mu_2):=\{\sigma\in\{-1,+1\}^N:\ m_1(\sigma)=\mu_1,\ m_2(\sigma)=\mu_2\}.
		\]
		Then for every $\varepsilon>0$ there exists $c_\varepsilon>0$ such that
		\[
		\nu_N^{\mu_1,\mu_2}\Big(\Big|\frac1N\sum_{i\in B_1}\sigma_i\sigma_{i+N/2}
		-\frac12\mu_1\mu_2\Big|>\varepsilon\Big)\le e^{-c_\varepsilon N}
		\]
		for all $N$ large enough.
	\end{lemma}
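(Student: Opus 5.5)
Under $\nu_N^{\mu_1,\mu_2}$ the two blocks are independent: the restriction of $\sigma$ to $B_1$ is a uniformly chosen configuration with exactly $k_1:=N(1+\mu_1)/4$ plus spins among $n:=N/2$ sites, and likewise for $B_2$ with $k_2:=N(1+\mu_2)/4$. I would condition on the block-$B_1$ configuration, which fixes the set $P\subset B_1$ of its $k_1$ plus sites. Then
\[
\sum_{i\in B_1}\sigma_i\sigma_{i+N/2}=\sum_{i\in P}\sigma_{i+N/2}-\sum_{i\in B_1\setminus P}\sigma_{i+N/2}=2\sum_{i\in P}\sigma_{i+N/2}-n\mu_2 .
\]
So it suffices to study $S:=\sum_{i\in P}\sigma_{i+N/2}$, the sum of $k_1$ spins drawn without replacement from an urn of $n$ spins containing $k_2$ plus signs. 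This is a hypergeometric quantity. Its mean is $k_1\mu_2=\frac N4(1+\mu_1)\mu_2$, so the mean of the full sum is $\frac N2(1+\mu_1)\mu_2-\frac N2\mu_2=\frac N2\mu_1\mu_2$. After dividing by $N$ this is exactly the centering $\frac12\mu_1\mu_2$ in the statement.

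For the concentration I would use Hoeffding's inequality for sampling without replacement. Hoeffding showed that such sums are dominated in convex order by sums with replacement, so the Chernoff bound for i.i.d.\ $\pm1$ variables still applies. This gives
\[
\nu\big(|S-\mathbb E S|>tN\big)\le 2\exp\!\big(-2(tN)^2/(4k_1)\big)\le 2e^{-c t^2 N},
\]
using $k_1\le N/2$. With $t=\varepsilon/2$ I get the deviation bound $e^{-c_\varepsilon N}$ conditionally on $P$. The bound is uniform in $P$, so it survives integrating out the $B_1$ configuration. The factor $2$ is absorbed by shrinking $c_\varepsilon$ for $N$ large.

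I do not expect a genuine obstacle. The one point needing care is justifying the without-replacement Hoeffding bound, and I would simply cite Hoeffding (1963, Thm.~4). If a self-contained argument is preferred, I would instead compute the hypergeometric probabilities via binomial coefficients and Stirling's formula. Deviations of order $N$ then have probability $\exp(-N I(\cdot))$ with a strictly positive rate, because the relative-entropy-type exponent vanishes only at the mean. Alternatively, a martingale bounded-differences (Azuma) argument works: reveal the $B_2$ spins one at a time and note that swapping two spins changes the sum by at most $2$.

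Typicality and admissibility enter only to make $\Omega_N(\mu_1,\mu_2)$ nonempty. The constant $c_\varepsilon$ can be chosen independently of $(\mu_1,\mu_2)$.
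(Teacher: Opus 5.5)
Your proof is correct. It differs from the paper's in the decisive concentration step, although both arguments study the same hypergeometric variable. The paper's formula \eqref{eq:wkeit_typ} is exactly the hypergeometric law of the number $n_{++}$ of $++$ pairs, and your $S$ equals $2n_{++}-k_1$.

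The paper expands \eqref{eq:wkeit_typ} with Stirling's formula. It shows that the resulting exponent (which it also calls $S(\gamma)$) is strictly concave with unique maximizer $\gamma^*=\frac12(1+\mu_1)(1+\mu_2)$, and concludes that other values of $n_{++}$ are exponentially less likely. It then repeats the count for $--$ pairs, which is redundant because $n_{--}=\frac N2-k_1-k_2+n_{++}$ is already determined.

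You instead use the identity $\sum_{i\in B_1}\sigma_i\sigma_{i+N/2}=2S-\frac N2\mu_2$ to reduce everything to a single without-replacement sum, and then apply Hoeffding's inequality. Your route gives an explicit, fully uniform rate: $c_\varepsilon=\varepsilon^2/8$ works for all large $N$, independently of $(\mu_1,\mu_2)$ and of the conditioning set $P$. This is precisely the uniformity that Corollary \ref{cor:tilted_expectation} later invokes. It also sidesteps the awkwardness that admissibility of a fixed $(\mu_1,\mu_2)$ depends on $N$.

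The paper, by contrast, leaves several things implicit: the quantitative gap $S(\gamma^*)-S(\gamma)$ for $\gamma$ away from $\gamma^*$, the polynomial Stirling prefactors, and how the rate depends on $(\mu_1,\mu_2)$. In exchange, its computation identifies the sharp large-deviation exponent (a relative entropy), which is more than the lemma requires.
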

    
\begin{remark}As a corollary of Lemma~\ref{lem:cross_conc} we obtain: 
	
	Under the assumptions of Lemma~\ref{lem:cross_conc},
	\[
	n_{++}=\frac N8(1+\mu_1)(1+\mu_2)+o(N),
	\qquad
	n_{--}=\frac N8(1-\mu_1)(1-\mu_2)+o(N)
	\]
	with overwhelming probability.
\end{remark}
	
	\begin{proof}
	    
	Let $\mu=(\mu_1,\mu_2)\in A_\kappa$ be typical and admissible. Then there are $N(1+\mu_1)/4$ plus spins in $B_1$ and $N(1+\mu_2)/4$ plus spins in $B_2$. The probability that exactly $n$ of the plus spins in $B_1$ are paired with plus spins in $B_2$ is given by
	\be \label{eq:wkeit_typ}
	\frac{\binom{\frac{N}2}{n}\binom{\frac{N}2-n}{\frac{N}4(1+\mu_1)-n}
		\binom{\frac N2-\frac{N}4(1+\mu_1)}{\frac{N}4(1+\mu_2)-n}}
	{\binom{\frac N2}{\frac{N}4(1+\mu_1)}\binom{\frac N2}{\frac{N}4(1+\mu_2)}}.
	\ee
	Indeed, there are $\binom{\frac{N}2}{n}$ choices for the $+{+}$ pairs across the two blocks. The remaining $\frac{N}4(1+\mu_1)-n$ plus spins in $B_1$ can be placed in $\binom{\frac{N}2-n}{\frac{N}4(1+\mu_1)-n}$ ways. Finally, the remaining $\frac{N}4(1+\mu_2)-n$ plus spins in $B_2$ must be placed among the $\frac N2-\frac{N}4(1+\mu_1)$ vertices not paired with plus spins in $B_1$, yielding $\binom{\frac N2-\frac{N}4(1+\mu_1)}{\frac{N}4(1+\mu_2)-n}$ possibilities.
	
	In total, there are $\binom{\frac N2}{\frac{N}4(1+\mu_1)}\binom{\frac N2}{\frac{N}4(1+\mu_2)}$ admissible configurations of plus spins in blocks $B_1$ and $B_2$. We note that this expression is well defined provided
	\[
	\frac{N}4(1+\mu_1)+\frac{N}4(1+\mu_2)-n\le \frac N2,
	\]
	an assumption that we will impose throughout.

Since for typical and admissible $(\mu_1,\mu_2)$ both
$\frac N4(1+\mu_1)$ and $\frac N4(1+\mu_2)$ are of order $N$,
also $n$ will typically be of order $N$. Write $n=\gamma \frac N4$
with $\gamma\in\R$ such that $\gamma\frac N4\in\N$.
Let
\[
a:=\frac N4(1+\mu_1)\qquad \text{and }\quad b:=\frac N4(1+\mu_2).
\]
The number $n$ of $++$-connections must satisfy
\[
\max\Big\{0,\,a+b-\frac N2\Big\}\ \le\ n\ \le\ \min\{a,b\},
\]
equivalently,
\begin{equation}\label{eq:gamma_range}
	\gamma\in\Big[\max\{0,\mu_1+\mu_2\},\ \min\{1+\mu_1,1+\mu_2\}\Big]
\end{equation}
( and 
$\gamma \frac N 4$ needs to be an integer).

 We will therefore expand the numerator  \eqref{eq:wkeit_typ} using Stirling's formula. To this end notice that 
	\begin{multline*}
		\binom{\frac{N}2}{n}\binom{\frac{N}2-n}{\frac{N}4(1+\mu_1)-n}\binom{\frac N2-\frac{N}4(1+\mu_1)}{\frac{N}4(1+\mu_2)-n}\\=
		\frac{\frac{N}2 !}{n!(\frac{N}4(1+\mu_1)-n)!(\frac{N}4(1+\mu_2)-n)!
			(\frac N2 -\frac{N}4(1+\mu_1)-\frac{N}4(1+\mu_2)+n)!}.
	\end{multline*}
	Employing Stirling's formula to the factorials on the right hand side yields
	\begin{align*}
		&\frac{\frac{N}2 !}{n!(\frac{N}4(1+\mu_1)-n)!(\frac{N}4(1+\mu_2)-n)!
			(\frac N2 -\frac{N}4(1+\mu_1)-\frac{N}4(1+\mu_2)+n)!}\\
		=&\frac{\sqrt {\frac N2}}{(2 \pi)^{3/2}\sqrt{n(\frac{N}4(1+\mu_1)-n)(\frac{N}4(1+\mu_2)-n)(\frac N2 -\frac{N}4(1+\mu_1)-\frac{N}4(1+\mu_2)+n)}}	\\
		&\times \exp\left(\frac N2 \log \frac N2-n \log n-\big(\frac{N}4(1+\mu_1)-n\big)\log\big(\frac{N}4(1+\mu_1)-n\big)\right.\\
		&\qquad \qquad \left.-
		\big(\frac{N}4(1+\mu_2)-n\big)\log\big(\frac{N}4(1+\mu_2)-n\big) \right.\\
		&\qquad \qquad \qquad \left.
		-
		\big(\frac N2 -\frac{N}4(1+\mu_1) -\frac{N}4(1+\mu_2)+n\big)\log\big(\frac N2 -\frac{N}4(1+\mu_1)-\frac{N}4(1+\mu_2)+n\big)\right)
	\end{align*}
	Our focus will be on the $\gamma$-dependent part of 
	the exponent. Let us abbreviate this by
	\begin{align*}
		S(\gamma)&:=-n \log n-\big(\frac{N}4(1+\mu_1)-n\big)\log\big(\frac{N}4(1+\mu_1)-n\big)-
		\big(\frac{N}4(1+\mu_2)-n\big)\log\big(\frac{N}4(1+\mu_2)-n\big) \\
		&\qquad \qquad
		-
		\big(\frac N2 -\frac{N}4(1+\mu_1) -\frac{N}4(1+\mu_2)+n\big)\log\big(\frac N2 -\frac{N}4(1+\mu_1)-\frac{N}4(1+\mu_2)+n\big) 	\\
		&= - m \Big(\gamma \log (\gamma m)+(1+\mu_1-\gamma)
		\log(m(1+\mu_1-\gamma))+(1+\mu_2-\gamma)
		\log(m(1+\mu_2-\gamma))\Big.\\
		&\qquad \Big. + (\gamma-(\mu_1+\mu_2))\log(m(\gamma-(\mu_1+\mu_2)))\Big)
	\end{align*}
	where we have set $m:=\frac N 4$ and used that $n=
	\gamma m$.
	
Ignoring additive constants that do not depend on $\gamma$, we may write
\begin{multline*}
\frac1m S(\gamma)\\=
-\Big[\gamma\log\gamma+(1+\mu_1-\gamma)\log(1+\mu_1-\gamma)
+(1+\mu_2-\gamma)\log(1+\mu_2-\gamma)
+(\gamma-(\mu_1+\mu_2))\log(\gamma-(\mu_1+\mu_2))\Big] + C,
\end{multline*}
where $C$ does not depend on $\gamma$.
Differentiating yields
\begin{align*}
	\frac{d}{d\gamma}S(\gamma)
	&=-m\Big(\log\gamma+\log(\gamma-(\mu_1+\mu_2))
	-\log(1+\mu_1-\gamma)-\log(1+\mu_2-\gamma)\Big).
\end{align*}
	Moreover,
	\[
	\frac {d^2}{d\gamma^2}S(\gamma)=
	-m\left(\frac 1 \gamma+\frac 1{\gamma-(\mu_1+\mu_2)}
	+\frac 1{1+\mu_1-\gamma}+\frac 1{1+\mu_2-\gamma}\right).
	\]
	By \eqref{eq:gamma_range}, for $\gamma$ in the interior of the feasible interval
	we have
	\[
	\gamma>0,\quad \gamma-(\mu_1+\mu_2)>0,\quad 1+\mu_1-\gamma>0,\quad 1+\mu_2-\gamma>0,
	\]
	so all four denominators are positive. Hence the bracket is strictly positive
	and therefore $S''(\gamma)<0$. In particular, $S$ is strictly concave on the
	feasible interval, so the maximizer $\gamma^*$ is unique and is characterized
	by $S'(\gamma^*)=0$.
 Such a $\gamma^*$ satisfies
	$$
	\log \frac{\gamma^*(\gamma^*-(\mu_1+\mu_2))}
	{(1+\mu_1-\gamma^*)(1+\mu_2-\gamma^*)}=0,
	$$
	i.e.\
	$$
	\gamma^*=\frac 12 (\mu_1+\mu_2+\mu_1\mu_2+1).
	$$
	Notice that for given $(\mu_1, \mu_2)\in A_\kappa$ configurations with $\gamma' \frac N4$ (where $\gamma' \neq \gamma^*$) many $++$ connections 
	between $B_1$ and $B_2$ have an asymptotically
	exponentially smaller probability than those with    
	$\gamma^* \frac N4$ many $++$ connections 
	between $B_1$ and $B_2$. 
	
	Following the same arguments for the minus-minus 
	connections we see that with overwhelming probability there are $\gamma^{**}\left( \frac{N}{4} + o(N) \right)$ many $--$ connections between $B_1$ and $B_2$, where
    $$
    \gamma^{**} = \frac{1}{2}\left( - \mu_1 - \mu_2 + \mu_1 \mu_2 + 1\right).
    $$
    In other words, there are $(\gamma^* + \gamma^{**})\frac{N}{4} = (\mu_1\mu_2 + 1)\frac{N}{4} + o(N)$ many aligned pairs of spins and therefore $\frac{N}{2}-(\mu_1\mu_2 + 1)\frac{N}{4} + o(N)$ many spin pairs of opposite orientation. This finished the proof of the Lemma.

    \end{proof}

For an admissible magnetization vector $\mu=(\mu_1,\mu_2)$, let
\[
\Omega_N(\mu):=\{\sigma\in\{-1,+1\}^N:\ m_1(\sigma)=\mu_1,\ m_2(\sigma)=\mu_2\}
\]
and let $\nu_N^\mu$ denote the uniform probability measure on $\Omega_N(\mu)$.
Moreover, define
\[
S(\sigma):=\sum_{i\in B_1}\sigma_i\sigma_{i+N/2}.
\]
Since the Curie--Weiss part of the Hamiltonian is constant on $\Omega_N(\mu)$, we have
\begin{equation}\label{eq:mu-decomp}
	\mu_{N,\alpha_N,\beta}(m^N=\mu)
	=
	\frac{1}{Z_{N,\alpha_N,\beta}}
	\exp\!\left(\frac{\beta N}{8}(\mu_1^2+\mu_2^2)\right)
	|\Omega_N(\mu)|
	\mathbb E_{\nu_N^\mu}\!\left[e^{\alpha_N S}\right].
\end{equation}

\medskip

	\begin{corollary}\label{cor:tilted_expectation}
		Let $\mu=(\mu_1,\mu_2)\in A_\kappa$ be typical and admissible, and define
		$
		S(\sigma)$ as above. 
		Then for every $\delta>0$,
		\[
		\exp\!\left(\frac{\alpha_N N}{2}\mu_1\mu_2-\delta \alpha_N N\right)
		\bigl(1-e^{-c_\delta N}\bigr)
		\le
		\mathbb E_{\nu_N^\mu}\!\left[e^{\alpha_N S}\right]
		\]
		and
		\[
		\mathbb E_{\nu_N^\mu}\!\left[e^{\alpha_N S}\right]
		\le
		\exp\!\left(\frac{\alpha_N N}{2}\mu_1\mu_2+\delta \alpha_N N\right)
		+
		e^{\alpha_N N/2-c_\delta N}
		\]
		for all $N$ sufficiently large and some constant $c_\delta>0$.
		In particular,
		\begin{equation}\label{eq:tilted_expectation_asymp}
			\log \mathbb E_{\nu_N^\mu}\!\left[e^{\alpha_N S}\right]
			=
			\frac{\alpha_N N}{2}\mu_1\mu_2+o(\alpha_N N).
		\end{equation}
		Moreover, the estimate \eqref{eq:tilted_expectation_asymp} holds uniformly over all
		typical and admissible $\mu\in A_\kappa$.
	\end{corollary}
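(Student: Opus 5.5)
The plan is to split the expectation according to the good event from Lemma~\ref{lem:cross_conc}. Fix $\delta>0$ and set
\[
G_\delta:=\Big\{\sigma\in\Omega_N(\mu):\ \Big|\frac1N S(\sigma)-\frac12\mu_1\mu_2\Big|\le\delta\Big\}.
\]
By Lemma~\ref{lem:cross_conc} with $\varepsilon=\delta$, we have $\nu_N^\mu(G_\delta^c)\le e^{-c_\delta N}$ for $N$ large. On $G_\delta$ the exponent satisfies
\[
\frac{\alpha_N N}{2}\mu_1\mu_2-\delta\alpha_N N\le \alpha_N S\le \frac{\alpha_N N}{2}\mu_1\mu_2+\delta\alpha_N N,
\]
since $\alpha_N>0$. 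On $G_\delta^c$ we only use the trivial bounds $0\le e^{\alpha_N S}\le e^{\alpha_N N/2}$, which follow from $|S|\le N/2$.

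For the lower bound, drop the contribution of $G_\delta^c$ (it is nonnegative) and use $\nu_N^\mu(G_\delta)\ge 1-e^{-c_\delta N}$. For the upper bound, bound the $G_\delta$ part by the upper exponent times $\nu_N^\mu(G_\delta)\le 1$, and bound the $G_\delta^c$ part by $e^{\alpha_N N/2}e^{-c_\delta N}$. This gives both displayed inequalities.

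For \eqref{eq:tilted_expectation_asymp}, take logarithms. The lower bound gives
\[
\log\mathbb E_{\nu_N^\mu}\big[e^{\alpha_N S}\big]\ge \frac{\alpha_N N}{2}\mu_1\mu_2-\delta\alpha_N N+\log\big(1-e^{-c_\delta N}\big),
\]
and the last term is $O(e^{-c_\delta N})=o(\alpha_N N)$. For the upper bound, write the second summand relative to the first:
\[
e^{\alpha_N N/2-c_\delta N}=e^{\frac{\alpha_N N}{2}\mu_1\mu_2+\delta\alpha_N N}\cdot e^{\alpha_N N(\frac12-\frac12\mu_1\mu_2-\delta)-c_\delta N}.
\]
Since $\alpha_N\to0$, the second exponent is at most $\alpha_N N-c_\delta N\le -\tfrac{c_\delta}{2}N$ for large $N$. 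Hence the log of the sum is at most $\frac{\alpha_N N}{2}\mu_1\mu_2+\delta\alpha_N N+\log(1+e^{-c_\delta N/2})$. Dividing by $\alpha_N N$ shows that the limsup and liminf differ from $\frac12\mu_1\mu_2$ by at most $\delta$. Letting $\delta\downarrow0$ yields the $o(\alpha_N N)$ statement. The additive errors $e^{-cN}$ are negligible relative to $\alpha_N N=N^{1-\rho}$, which tends to a positive quantity, or more generally to anything not exponentially small.

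The main obstacle is uniformity over typical admissible $\mu$, because Lemma~\ref{lem:cross_conc} is stated for fixed $\mu$ with an unquantified threshold for $N$. I would go back to its proof: the Stirling/large-deviation analysis depends on $\mu$ only through $\gamma^*(\mu)$ and the strictly concave rate function $S(\gamma)/m$. On $A_\kappa$ all $\mu$ lie within $N^{-\kappa}$ of the four points $(\pm m^+(\beta/2),\pm m^+(\beta/2))$, which stay bounded away from $\pm1$. So the second derivative of the rate function is bounded away from $0$ uniformly on a compact neighborhood. The polynomial Stirling prefactors and the number of admissible values of $n$ (at most $N$) are likewise uniform. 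This yields a single constant $c_\delta$ and a single threshold $N_0(\delta)$ valid for all typical admissible $\mu$. With that uniform constant, the argument above goes through verbatim and gives the uniform version of \eqref{eq:tilted_expectation_asymp}.
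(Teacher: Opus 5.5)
Your proposal is correct and follows the paper's proof essentially step for step. It uses the same good event from Lemma~\ref{lem:cross_conc}, the lower bound obtained by discarding $G_\delta^c$, the upper bound via $e^{\alpha_N N/2}\nu_N^\mu(G_\delta^c)$, and the $\log(1+\cdots)$ comparison followed by $\delta\downarrow 0$. Your two extra remarks make explicit points the paper uses without comment: the exponentially small errors stay negligible after dividing by $\alpha_N N$ even when $\alpha_N N\to 0$, and uniformity in $\mu$ needs a $\mu$-independent $c_\delta$, which can be extracted from the uniformly strongly concave Stirling exponent in the proof of Lemma~\ref{lem:cross_conc}.
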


	\begin{proof}
		Fix $\delta>0$ and let
		\[
		G_{\mu,\delta}
		:=
		\left\{
		\sigma\in\Omega_N(\mu):
		\left|\frac1N S(\sigma)-\frac12\mu_1\mu_2\right|\le \delta
		\right\}.
		\]
		By Lemma~\ref{lem:cross_conc}, there exists $c_\delta>0$ such that
		\[
		\nu_N^\mu(G_{\mu,\delta}^c)\le e^{-c_\delta N}
		\]
		for all $N$ sufficiently large.
		
		On $G_{\mu,\delta}$ we have
		\[
		\exp\!\left(\alpha_N N\left(\frac12\mu_1\mu_2-\delta\right)\right)
		\le e^{\alpha_N S}
		\le
		\exp\!\left(\alpha_N N\left(\frac12\mu_1\mu_2+\delta\right)\right).
		\]
		Therefore,
		\begin{align*}
			\mathbb E_{\nu_N^\mu}\!\left[e^{\alpha_N S}\right]
			&\ge
			\mathbb E_{\nu_N^\mu}\!\left[e^{\alpha_N S}\mathbf 1_{G_{\mu,\delta}}\right] \\
			&\ge
			\exp\!\left(\alpha_N N\left(\frac12\mu_1\mu_2-\delta\right)\right)
			\nu_N^\mu(G_{\mu,\delta}),
		\end{align*}
		which proves the lower bound.
		
		Similarly,
		\begin{align*}
			\mathbb E_{\nu_N^\mu}\!\left[e^{\alpha_N S}\right]
			&\le
			\mathbb E_{\nu_N^\mu}\!\left[e^{\alpha_N S}\mathbf 1_{G_{\mu,\delta}}\right]
			+
			\mathbb E_{\nu_N^\mu}\!\left[e^{\alpha_N S}\mathbf 1_{G_{\mu,\delta}^c}\right] \\
			&\le
			\exp\!\left(\alpha_N N\left(\frac12\mu_1\mu_2+\delta\right)\right)
			+
			e^{\alpha_N N/2}\nu_N^\mu(G_{\mu,\delta}^c),
		\end{align*}
		which yields the upper bound.

	To see \eqref{eq:tilted_expectation_asymp} 
	note that from the above we have that, for every $\delta>0$ and all $N$ sufficiently large,
	\[
	\frac{\alpha_N N}{2}\mu_1\mu_2-\delta \alpha_N N + \log(1-e^{-c_\delta N})
	\le
	\log \mathbb E_{\nu_N^\mu}\!\left[e^{\alpha_N S}\right]
	\]
	and
	\[
	\log \mathbb E_{\nu_N^\mu}\left[e^{\alpha_N S}\right]
	\le
	\frac{\alpha_N N}{2}\mu_1\mu_2+\delta \alpha_N N
	+
	\log\left(
	1+\exp\left(\alpha_N N\Bigl(\tfrac12-\tfrac12\mu_1\mu_2+\delta\Bigr)-c_\delta N\right)
	\right).
	\]
	Since $\alpha_N N=o(N)$, the last logarithmic term is $o(1)$, uniformly over all
	typical and admissible $\mu\in A_\kappa$. Hence, for every $\delta>0$,
	\[
	\limsup_{N\to\infty}
	\sup_{\mu\in A_\kappa}
	\frac{1}{\alpha_N N}
	\left|
	\log \mathbb E_{\nu_N^\mu}\!\left[e^{\alpha_N S}\right]
	-
	\frac{\alpha_N N}{2}\mu_1\mu_2
	\right|
	\le \delta.
	\]
	Since $\delta>0$ was arbitrary, we conclude that \eqref{eq:tilted_expectation_asymp}
holds uniformly over all typical and admissible $\mu\in A_\kappa$.
	
	\end{proof}

\begin{proof}[Proof of Theorem~\ref{theo:theo1}]
	By Proposition~\ref{prop:atypical_mu},
	\[
	\mu_{N,\alpha_N,\beta}\bigl(m^N\notin A_\kappa\bigr)\longrightarrow 0,
	\]
	so it suffices to analyze the Gibbs weights of admissible magnetization vectors
	$\mu=(\mu_1,\mu_2)\in A_\kappa$.
	
	Fix such a $\mu$. By \eqref{eq:mu-decomp} and Corollary~\ref{cor:tilted_expectation},
	\begin{equation}\label{eq:mu_weight_asymp}
		\mu_{N,\alpha_N,\beta}(m^N=\mu)
		=
		\frac{|\Omega_N(\mu)|}{Z_{N,\alpha_N,\beta}}
		\exp\!\left(
		\frac{\beta N}{8}(\mu_1^2+\mu_2^2)
		+
		\frac{\alpha_N N}{2}\mu_1\mu_2
		+o(\alpha_N N)
		\right).
	\end{equation}
	
	Now let $\mu=(\mu_1,\mu_2)\in A_\kappa$ and $\tilde \mu=(\mu_1,-\mu_2)\in A_\kappa$.
	Since
	\[
	|\Omega_N(\mu)|
	=
	\binom{N/2}{\frac N4(1+\mu_1)}
	\binom{N/2}{\frac N4(1+\mu_2)}
	\]
	and
	\[
	|\Omega_N(\tilde\mu)|
	=
	\binom{N/2}{\frac N4(1+\mu_1)}
	\binom{N/2}{\frac N4(1-\mu_2)},
	\]
	we have $|\Omega_N(\mu)|=|\Omega_N(\tilde\mu)|$ by the symmetry
	$\binom{n}{k}=\binom{n}{n-k}$. Moreover,
	\[
	\mu_1^2+\mu_2^2=\mu_1^2+(-\mu_2)^2.
	\]
	Hence \eqref{eq:mu_weight_asymp} yields
	\[
	\frac{\mu_{N,\alpha_N,\beta}(m^N=\mu)}
	{\mu_{N,\alpha_N,\beta}(m^N=\tilde\mu)}
	=
	\exp\!\left(\alpha_N N\,\mu_1\mu_2 + o(\alpha_N N)\right).
	\]

	For $\mu\in A_\kappa$, we have
	\[
	\mu_i=\pm m^*(\beta/2)+O(N^{-\kappa}),\qquad i=1,2.
	\]
	Hence
	\[
	\mu_1\mu_2=
	\begin{cases}
		(m^*(\beta/2))^2+O(N^{-\kappa}), & \text{if }\operatorname{sign}(\mu_1)=\operatorname{sign}(\mu_2),\\[1ex]
		-(m^*(\beta/2))^2+O(N^{-\kappa}), & \text{if }\operatorname{sign}(\mu_1)=-\operatorname{sign}(\mu_2).
	\end{cases}
	\]
	Therefore, uniformly over admissible $\mu\in A_\kappa$,
	\[
	\frac{\mu_{N,\alpha_N,\beta}(m^N=(\mu_1,\mu_2))}
	{\mu_{N,\alpha_N,\beta}(m^N=(\mu_1,-\mu_2))}
	=
	\exp\!\left(\alpha_N N (m^*(\beta/2))^2 + o(\alpha_N N)\right)
	\]
	if $\operatorname{sign}(\mu_1)=\operatorname{sign}(\mu_2)$, whereas
	\[
	\frac{\mu_{N,\alpha_N,\beta}(m^N=(\mu_1,\mu_2))}
	{\mu_{N,\alpha_N,\beta}(m^N=(\mu_1,-\mu_2))}
	=
	\exp\!\left(-\alpha_N N (m^*(\beta/2))^2 + o(\alpha_N N)\right)
	\]
	if $\operatorname{sign}(\mu_1)=-\operatorname{sign}(\mu_2)$.
	
	Now partition $A_\kappa$ into the four sign-wells
	$
	A_\kappa^{++},A_\kappa^{+-}, A_\kappa^{-+},$ and $A_\kappa^{--}$,
	according to the signs of $\mu_1$ and $\mu_2$. By Proposition~\ref{prop:atypical_mu},
	it is enough to compare the total Gibbs masses of these four sets.
	
	By global spin-flip symmetry,
	\[
	\mu_{N,\alpha_N,\beta}(m^N\in A_\kappa^{++})
	=
	\mu_{N,\alpha_N,\beta}(m^N\in A_\kappa^{--})
	\]
	and
	\[
	\mu_{N,\alpha_N,\beta}(m^N\in A_\kappa^{+-})
	=
	\mu_{N,\alpha_N,\beta}(m^N\in A_\kappa^{-+}).
	\]
	Moreover, the number of admissible magnetization values in each well is at most polynomial in $N$.
	
	If $N\alpha_N\to\infty$, then the exponential factor
	$
	\exp\!\left(\alpha_N N (m^*(\beta/2))^2 + o(\alpha_N N)\right)
	$
	dominates any polynomial correction. Hence the total Gibbs mass of the aligned wells
	$A_\kappa^{++}\cup A_\kappa^{--}$ dominates that of the anti-aligned wells
	$A_\kappa^{+-}\cup A_\kappa^{-+}$.
	
	If $N\alpha_N\to 0$, then the above ratio tends to $1$ uniformly over admissible
	magnetizations in $A_\kappa$. Therefore corresponding magnetizations in the four wells
	have asymptotically equal Gibbs weights, and by symmetry the total Gibbs mass is
	asymptotically equally distributed among the four wells.

	If $N\alpha_N\to\infty$, the aligned wells dominate and the distribution of
	$m^N$ converges to
	\[
	\frac12\Bigl(
	\delta_{(m^*(\beta/2),m^*(\beta/2))}
	+
	\delta_{(-m^*(\beta/2),-m^*(\beta/2))}
	\Bigr).
	\]
	If $N\alpha_N\to 0$, the ratio tends to $1$, and by symmetry all four wells carry
	equal asymptotic mass. Hence the distribution of $m^N$ converges to
	\[
	\frac14\Bigl(
	\delta_{(m^*(\beta/2),m^*(\beta/2))}
	+\delta_{(m^*(\beta/2),-m^*(\beta/2))}
	+\delta_{(-m^*(\beta/2),m^*(\beta/2))}
	+\delta_{(-m^*(\beta/2),-m^*(\beta/2))}
	\Bigr).
	\]
	
	This proves the theorem.

	\end{proof}

\section{Proof of Theorem \ref{theo:theo2}}

	Recall that in Theorem \ref{theo:theo2}, we are considering a natural variant of the model considered in Theorem \ref{theo:theo1} that is obtained by first fixing an arbitrary perfect
	matching between the two blocks $B_1$ and $B_2$, and then independently retaining
	each matching edge with probability $p(N)\in(0,1]$.
	Equivalently, the inter-block interaction term in the Hamiltonian becomes
	\[
	-\alpha_N \sum_{i\in B_1}\varepsilon_i\,\sigma_i\sigma_{i+N/2},
	\]
	where $(\varepsilon_i)_{i\in B_1}$ are i.i.d.\ Bernoulli random variables with
	parameter $p(N)$.
	Let $\mu^{\varepsilon}_{N,\alpha_N,\beta}$ denote the corresponding \emph{quenched}
	Gibbs measure.
	
	Let $M_N:=\sum_{i\in B_1}\varepsilon_i$ be the number of retained inter-block edges.
	If $p(N)N\to\infty$, then $M_N=\frac N2 p(N)+o(Np(N))$ with high probability, so the
	inter-block interaction has a deterministic first-order scale.
	In this regime, the analysis of the proof of Theorem~\ref{theo:theo1} carries over
	with only minor modifications.
	
\medskip  

	Fix an arbitrary perfect matching between $B_1$ and $B_2$ and let
	$\varepsilon=(\varepsilon_i)_{i\in B_1}$ be i.i.d.\ Bernoulli$(p(N))$ variables.
	Consider the quenched Hamiltonian
	\[
	H_{N,\alpha_N,\beta}^\varepsilon(\sigma)
	= -\frac{\beta N}{8}\bigl(m_1(\sigma)^2+m_2(\sigma)^2\bigr)
	-\alpha_N\sum_{i\in B_1}\varepsilon_i\,\sigma_i\sigma_{i+N/2},
	\qquad \sigma\in\{-1,+1\}^N,
	\]
	and write $\mu_{N,\alpha_N,\beta}^\varepsilon$ for the corresponding quenched Gibbs
	measure. Let
	\[
	M_N:=\sum_{i\in B_1}\varepsilon_i
	\]
	be the number of retained inter-block edges.
\begin{proposition}\label{prop:atypical_mu_thinned}
	Assume that $\alpha_N p(N)=N^{-\rho}\to 0$ for some $\rho>0$ and that
	$2\kappa<\rho$.
	
	\begin{enumerate}
		\item If $Np(N)\to\infty$, then
		\[
		\mu_{N,\alpha_N,\beta}^\varepsilon(m^N\notin A_\kappa)\longrightarrow 0
		\]
		with $\varepsilon$-probability $1-o(1)$.
		
		\item If, in addition, there exists $\delta\in(0,1)$ such that
		\be \label{eq:cond_BC}
		\sum_{N=1}^\infty \exp\!\left(-\frac{\delta^2}{6}Np(N)\right)<\infty,
		\ee
		then
		\[
		\mu_{N,\alpha_N,\beta}^\varepsilon(m^N\notin A_\kappa)\longrightarrow 0
		\qquad\text{for }\mathbb P_\varepsilon\text{-a.e. }\varepsilon.
		\]
		In particular, the latter holds if
		$
		Np(N)\ge C\log N
		$
		for all sufficiently large $N$ and some sufficiently large constant $C>0$.
	\end{enumerate}
\end{proposition}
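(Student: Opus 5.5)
Following the proof of Proposition~\ref{prop:atypical_mu}, I would compare the quenched measure $\mu^\varepsilon_{N,\alpha_N,\beta}$ with the decoupled measure $\mu_{N,0,\beta}$. The coupling term is now random, so the comparison constant becomes random, and I would control it with a Chernoff bound on $M_N$.

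First, for every $\sigma$ and every realization $\varepsilon$,
\[
\bigl|H^\varepsilon_{N,\alpha_N,\beta}(\sigma)-H_{N,0,\beta}(\sigma)\bigr|
=\alpha_N\Bigl|\sum_{i\in B_1}\varepsilon_i\sigma_i\sigma_{i+N/2}\Bigr|\le \alpha_N M_N .
\]
Summing over configurations exactly as before gives $e^{-\alpha_N M_N}Z_{N,0,\beta}\le Z^\varepsilon_{N,\alpha_N,\beta}\le e^{\alpha_N M_N}Z_{N,0,\beta}$. Hence, for every event $E$,
\[
\mu^\varepsilon_{N,\alpha_N,\beta}(E)\le e^{2\alpha_N M_N}\mu_{N,0,\beta}(E).
\]
With $E=(A_\kappa^N)^c$, \eqref{eq:MDP_decoupled} yields
\[
\mu^\varepsilon_{N,\alpha_N,\beta}\bigl((A_\kappa^N)^c\bigr)\le C'(N+1)^2\exp\bigl(-cN^{1-2\kappa}+2\alpha_N M_N\bigr).
\]
This is deterministic in $\varepsilon$ once $M_N$ is known.

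Second, I would bound $M_N$. Since $M_N\sim\mathrm{Bin}(N/2,p(N))$, the multiplicative Chernoff bound gives, for $\delta\in(0,1)$,
\[
\mathbb P_\varepsilon\bigl(M_N>(1+\delta)\tfrac N2 p(N)\bigr)\le \exp\bigl(-\tfrac{\delta^2}{3}\cdot\tfrac N2 p(N)\bigr)=\exp\bigl(-\tfrac{\delta^2}{6}Np(N)\bigr).
\]
On the complementary event, $2\alpha_N M_N\le (1+\delta)N\alpha_N p(N)=(1+\delta)N^{1-\rho}$. Because $2\kappa<\rho$, this is $o(N^{1-2\kappa})$, so the right-hand side of the first step tends to $0$ uniformly on this good event.

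Part (1) follows because $Np(N)\to\infty$ makes the probability of the bad event $o(1)$. For part (2), condition \eqref{eq:cond_BC} makes these probabilities summable, so Borel--Cantelli shows that almost surely only finitely many bad events occur; on the intersection of the good events the bound tends to $0$. If $Np(N)\ge C\log N$, then $\exp(-\tfrac{\delta^2}{6}Np(N))\le N^{-C\delta^2/6}$, which is summable once $C>6/\delta^2$.

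The only genuinely delicate point is that the exponent is stated via $\alpha_Np(N)=N^{-\rho}$ rather than $\alpha_N$. The comparison must therefore use the random count $M_N$, and not the crude bound $N/2$, which would only give $N\alpha_N$ and might not be $o(N^{1-2\kappa})$. The Chernoff step is what makes this replacement legitimate. I expect no further obstacle; the moderate deviation input is reused verbatim from the proof of Proposition~\ref{prop:atypical_mu}.
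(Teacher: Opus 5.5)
Your proposal is correct and follows essentially the same route as the paper: you use the deterministic comparison $\mu^\varepsilon_{N,\alpha_N,\beta}(E)\le e^{2\alpha_N M_N}\mu_{N,0,\beta}(E)$, the moderate deviation bound from the decoupled model, a Chernoff bound on $M_N$, and Borel--Cantelli for the almost sure statement. The differences are minor: you only need the upper tail, whereas the paper states the two-sided bound, and your explicit choice $C>6/\delta^2$ in the case $Np(N)\ge C\log N$ makes precise what the paper leaves implicit.
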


\begin{proof}
	Let
	\[
	M_N:=\sum_{i\in B_1}\varepsilon_i.
	\]
	Since $M_N\sim \mathrm{Bin}(N/2,p(N))$, the  Chernoff bound yields that
	for every $\delta\in(0,1)$,
	\[
	\mathbb P_\varepsilon\!\left(\left|M_N-\frac N2 p(N)\right|>\delta\frac N2 p(N)\right)
	\le
	2\exp\!\left(-\frac{\delta^2}{6}Np(N)\right).
	\]
	
	Fix such a $\delta\in(0,1)$.
	
	If $Np(N)\to\infty$, it follows that with $\varepsilon$-probability tending to one,
	\begin{equation}\label{eq:MN_good_event}
		(1-\delta)\frac N2 p(N)\le M_N\le (1+\delta)\frac N2 p(N).
	\end{equation}
	If, moreover, \eqref{eq:cond_BC} holds
	then by the Borel--Cantelli lemma, \eqref{eq:MN_good_event} holds for all sufficiently
	large $N$, for $\mathbb P_\varepsilon$-almost every realization.
	
	On the event \eqref{eq:MN_good_event}, we have
	\[
	\alpha_N M_N\le (1+\delta)\frac N2 \alpha_N p(N).
	\]
	Since $\alpha_N p(N)=N^{-\rho}$ and $2\kappa<\rho$, it follows that
	$
	\alpha_N M_N=o(N^{1-2\kappa}).
	$
	
	Now the proof of Proposition~\ref{prop:atypical_mu} carries over verbatim.
	Indeed, for every configuration $\sigma\in\{-1,+1\}^N$,
	\[
	\left|H_{N,\alpha_N,\beta}^\varepsilon(\sigma)-H_{N,0,\beta}(\sigma)\right|
	=
	\alpha_N\left|\sum_{i\in B_1}\varepsilon_i\,\sigma_i\sigma_{i+N/2}\right|
	\le \alpha_N M_N.
	\]
	Therefore,
	\[
	e^{-\alpha_N M_N}e^{-H_{N,0,\beta}(\sigma)}
	\le
	e^{-H_{N,\alpha_N,\beta}^\varepsilon(\sigma)}
	\le
	e^{\alpha_N M_N}e^{-H_{N,0,\beta}(\sigma)}.
	\]
	Summing over all $\sigma$ yields
	\[
	e^{-\alpha_N M_N}Z_{N,0,\beta}
	\le
	Z_{N,\alpha_N,\beta}^\varepsilon
	\le
	e^{\alpha_N M_N}Z_{N,0,\beta}.
	\]
	Hence, for every event $E\subset\{-1,+1\}^N$,
	\[
	\mu_{N,\alpha_N,\beta}^\varepsilon(E)
	\le
	e^{2\alpha_N M_N}\mu_{N,0,\beta}(E).
	\]
	Applying this with $E=\{m^N\notin A_\kappa\}$ and using the moderate deviation bound
	\eqref{eq:MDP_decoupled}, we obtain
	\[
	\mu_{N,\alpha_N,\beta}^\varepsilon(m^N\notin A_\kappa)
	\le
	e^{2\alpha_N M_N}\mu_{N,0,\beta}(m^N\notin A_\kappa),
	\]
	which tends to $0$ whenever $\alpha_N M_N=o(N^{1-2\kappa})$.
	
	This proves the first assertion in probability, and the second one almost surely.
\end{proof}

	Now, fix a typical, admissible $(\mu_1,\mu_2)\in A_\kappa$ and condition on
	$m^N(\sigma)=(\mu_1,\mu_2)$. As in Lemma~\ref{lem:cross_conc}, under $\nu_N^{\mu_1,\mu_2}$, the \emph{uniform}
	measure on $\Omega_N(\mu_1,\mu_2)$, the total number of same-sign pairs across the
	(full) matching concentrates exponentially fast around its maximizer; equivalently,
	the empirical average of $\sigma_i\sigma_{i+N/2}$ across all $N/2$ matching edges is
	$\frac12\mu_1\mu_2+o(1)$ with probability $1-e^{-cN}$:
    \begin{lemma}
       Fix a typical and admissible $(\mu_1,\mu_2)\in A_\kappa$, then vor every $\eps>0$ there exists $\widetilde{c}_\eps >0$ such that
        $$
        \nu_N^{\mu_1,\mu_2}\left( \left\vert\frac{1}{M_N}\sum_{i\in B_1\, ; \, \eps_i = 1}\sigma_i\sigma_{i+\frac{N}{2}} - \mu_1\mu_2 \right\vert >\eps \right) \le e^{-\widetilde{c}_\eps M_N},
        $$
        for all $N$ large enough. 
    \end{lemma}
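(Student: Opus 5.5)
The idea is to exploit that, under $\nu_N^{\mu_1,\mu_2}$, the configurations on the two blocks are independent. Each is uniformly distributed among the subsets of prescribed size: $a=\frac N4(1+\mu_1)$ plus spins in $B_1$ and $b=\frac N4(1+\mu_2)$ plus spins in $B_2$. The set $E:=\{i\in B_1:\varepsilon_i=1\}$ of retained edges, with $|E|=M_N$, is fixed and independent of $\sigma$, since we work quenched. So the sum over retained edges is a statistic of two independent samplings without replacement. I would prove concentration with Hoeffding's inequality for sampling without replacement (hypergeometric tails), rather than redoing the Stirling analysis of Lemma~\ref{lem:cross_conc} with $N/2$ replaced by $M_N$. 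That Stirling route would also work, but the hypergeometric route gives the rate $e^{-cM_N}$ directly.

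\textbf{Step 1 (the $B_1$-side).} Let $k_1:=|\{i\in E:\sigma_i=+1\}|$. Then $k_1$ is hypergeometric: $M_N$ draws from a population of size $N/2$ containing $a$ plus spins. Hoeffding's bound for sampling without replacement gives
\[
\nu_N^{\mu_1,\mu_2}\Big(\Big|\tfrac{k_1}{M_N}-\tfrac{1+\mu_1}{2}\Big|>\eta\Big)\le 2e^{-2\eta^2M_N}.
\]

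\textbf{Step 2 (the $B_2$-side, conditionally on $\sigma|_{B_1}$).} Split $E=E_+\cup E_-$ according to the sign of $\sigma_i$. Let $X$ and $Y$ be the numbers of plus spins of $B_2$ at the partners $i+N/2$ of $i\in E_+$ and of $i\in E_-$, respectively. Conditionally on $\sigma|_{B_1}$, both $X$ and $Y$ are hypergeometric, with $k_1$ and $M_N-k_1$ draws respectively, from a population of size $N/2$ containing $b$ plus spins. Since both numbers of draws are at most $M_N$, Hoeffding yields
\[
|X-k_1\tfrac{1+\mu_2}{2}|\le \eta M_N \quad\text{and}\quad |Y-(M_N-k_1)\tfrac{1+\mu_2}{2}|\le\eta M_N,
\]
each with probability at least $1-2e^{-2\eta^2M_N}$. (If a class has fewer than $\eta M_N$ elements, the corresponding bound is trivial.)

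\textbf{Step 3 (assembling).} Decomposing by signs,
\[
\sum_{i\in E}\sigma_i\sigma_{i+N/2}=(2X-k_1)-\bigl(2Y-(M_N-k_1)\bigr).
\]
On the good events of Steps 1 and 2 this equals $\mu_2(2k_1-M_N)+O(\eta M_N)=M_N\mu_1\mu_2+O(\eta M_N)$. Choosing $\eta=\varepsilon/C$ for a suitable absolute constant $C$ and taking a union bound gives the claim with $\widetilde c_\varepsilon\approx 2\eta^2$, after absorbing the factor $6$ for all $M_N$ large. By Proposition~\ref{prop:atypical_mu_thinned}'s Chernoff step, $M_N\to\infty$ with high probability (almost surely under \eqref{eq:cond_BC}). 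The constants do not depend on $\mu$, so the bound is uniform over typical admissible $\mu$.

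The only delicate point is to use Hoeffding's inequality in the form valid for sampling \emph{without} replacement. I would invoke Hoeffding's classical comparison showing that the hypergeometric distribution is dominated in convex order by the binomial. It also needs to be checked that the rate comes out in terms of $M_N$ rather than $N$, which is exactly what the draw counts $k_1$ and $M_N-k_1$, both at most $M_N$, guarantee.
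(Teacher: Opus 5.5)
Your proposal is correct, but it takes a different route from the paper.

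\textbf{The paper's route.} It first invokes Lemma~\ref{lem:cross_conc}. This gives, with probability $1-e^{-cN}$, the total number $(1+\mu_1\mu_2)\frac N4+o(N)$ of aligned pairs in the full matching. It then treats the $M_N$ retained pairs as a sample drawn without replacement from the $N/2$ pairs, so a single Hoeffding--Serfling bound on the number of aligned retained pairs gives $e^{-cN}+e^{-\widetilde c_\varepsilon M_N}\le e^{-c'_\varepsilon M_N}$. This tacitly uses that $\nu_N^\mu$ is invariant under simultaneous permutations of matched pairs. That invariance is what makes the set of aligned pairs, given its size, uniformly distributed, so that the retained count is hypergeometric.

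\textbf{Your route.} You use the product structure of $\nu_N^\mu$: it is the uniform measure on $B_1$-configurations with $a$ plus spins times the uniform measure on $B_2$-configurations with $b$ plus spins. You then run a two-stage hypergeometric argument: first $k_1$ on the $B_1$ side, then $X$ and $Y$ conditionally on $\sigma|_{B_1}$. The key observation is that the numbers of draws $k_1$ and $M_N-k_1$ are at most $M_N$, so every Hoeffding exponent is at least $2\eta^2M_N$. The identity $\sum_{i\in E}\sigma_i\sigma_{i+N/2}=(2X-k_1)-(2Y-(M_N-k_1))$ and your error bookkeeping are correct. The errors total at most about $6\eta M_N$, so $\eta=\varepsilon/6$ works.

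\textbf{What each buys.} Your argument is self-contained and never uses Lemma~\ref{lem:cross_conc}; the paper supports that lemma's exponential tail only by a Stirling/strict-concavity heuristic. It also avoids the exchangeability step. It yields an explicit rate $\widetilde c_\varepsilon$ of order $\varepsilon^2$, uniform over all admissible $\mu$, so typicality is not even needed. In the special case $E=B_1$, $M_N=N/2$, it reproves Lemma~\ref{lem:cross_conc} itself by elementary means. The paper's route is shorter once Lemma~\ref{lem:cross_conc} is granted, since it reduces everything to one sampling-without-replacement step, but it inherits that lemma's weaker justification.

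As in the paper, the bound $e^{-\widetilde c_\varepsilon M_N}$ is only meaningful once $M_N\to\infty$, and you correctly take this from the Chernoff step of Proposition~\ref{prop:atypical_mu_thinned}.
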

    \begin{proof}
   
	Restrict to the retained edge set
	$I:=\{i\in B_1:\varepsilon_i=1\}$ with $|I|=M_N$. 
	Recall that by Lemma~\ref{lem:cross_conc}, under the measure
	$\nu_N^{\mu_1,\mu_2}$, the total number of aligned edges across the full matching
	is
	\[
	\frac{N}{4}(1+\mu_1\mu_2)+o(N)
	\]
	with exponentially high probability.
	
	Conditioning on these total numbers of $++$ and $--$ edges across the full matching,
	the corresponding retained counts are hypergeometric ("drawing $M_N$-many times from $(\gamma^* + \gamma^{**})\frac{N}{4} = (1+\mu_1\mu_2) \frac{N}{4}$ many aligned pairs and $(1-\mu_1\mu_2)\frac{N}{4}$ many anti-aligned pairs"). By the Hoeffding--Serfling
	inequality for sampling without replacement, deviations from the mean
    $$
    \frac{2}{N}\left((+1)\cdot   (1+\mu_1\mu_2) \frac{N}{4} + (-1)\cdot(1-\mu_1\mu_2)\frac{N}{4} \right)= \mu_1\mu_2
    $$
    by $\eps$ are bounded by
	$\exp(-\widetilde{c}_\eps M_N)$, for some $\widetilde{c}_\eps >0$. 
	
	Combining this estimate with the exponential concentration from
	Lemma~\ref{lem:cross_conc}, we obtain
	\[
	\nu_N^{\mu_1,\mu_2}\left(
	\left|\frac{1}{M_N}\sum_{i\in I}\sigma_i\sigma_{i+N/2}-\mu_1\mu_2\right|>\varepsilon
	\right)
	\le e^{-cN}+e^{-\widetilde c_\varepsilon M_N}.
	\]
	Since $M_N\le N/2$, the right-hand side is bounded by
	$
	2e^{-c'_\varepsilon M_N}
	$
	for some $c'_\varepsilon>0$, and after adjusting the constant this yields
	\[
	\nu_N^{\mu_1,\mu_2}\left(
	\left|\frac{1}{M_N}\sum_{i\in I}\sigma_i\sigma_{i+N/2}-\mu_1\mu_2\right|>\varepsilon
	\right)
	\le e^{-c'_\varepsilon M_N}.
	\]
	
	Therefore, with conditional probability $1-e^{-\widetilde{c}_\eps M_N}$,
	\[
	\frac1{M_N}\sum_{i\in I}\sigma_i\sigma_{i+N/2}
	=\mu_1\mu_2+o(1).
	\]
	Thus, with $\nu_N^\mu$-probability at least $1-e^{- c'_\eps M_N}$,

	\begin{equation}\label{eq:quenched_cross_conc}
		\sum_{i\in B_1}\varepsilon_i\,\sigma_i\sigma_{i+N/2}
		=M_N\,\mu_1\mu_2+o(M_N).
	\end{equation}
	
\end{proof}

For an admissible magnetization vector $\mu=(\mu_1,\mu_2)$, let
\[
\Omega_N(\mu):=\{\sigma\in\{-1,+1\}^N:\ m_1(\sigma)=\mu_1,\ m_2(\sigma)=\mu_2\}
\]
and let $\nu_N^\mu$ denote the uniform probability measure on $\Omega_N(\mu)$.
Moreover, define
\[
S_\varepsilon(\sigma):=\sum_{i\in B_1}\varepsilon_i\,\sigma_i\sigma_{i+N/2}.
\]
Since the Curie--Weiss part of the Hamiltonian is constant on $\Omega_N(\mu)$, we have
\begin{equation}\label{eq:mu-decomp-quenched}
	\mu_{N,\alpha_N,\beta}^\varepsilon(m^N=\mu)
	=
	\frac{1}{Z_{N,\alpha_N,\beta}^\varepsilon}
	\exp\!\left(\frac{\beta N}{8}(\mu_1^2+\mu_2^2)\right)
	|\Omega_N(\mu)|
	\mathbb E_{\nu_N^\mu}\!\left[e^{\alpha_N S_\varepsilon}\right].
\end{equation}

\medskip
\begin{corollary}\label{cor:tilted_expectation_quenched}
	Let $\mu=(\mu_1,\mu_2)\in A_\kappa$ be typical and admissible, and fix $\delta>0$.
	Then
	\[
	\exp\!\left(\alpha_N M_N\mu_1\mu_2-\delta \alpha_N M_N\right)
	\bigl(1-e^{-c_\delta M_N}\bigr)
	\le
	\mathbb E_{\nu_N^\mu}\!\left[e^{\alpha_N S_\varepsilon}\right]
	\]
	and
	\[
	\mathbb E_{\nu_N^\mu}\!\left[e^{\alpha_N S_\varepsilon}\right]
	\le
	\exp\!\left(\alpha_N M_N\mu_1\mu_2+\delta \alpha_N M_N\right)
	+
	e^{\alpha_N M_N-c_\delta M_N}.
	\]
	In particular, if $\alpha_N\to0$ and $M_N\to\infty$, then
	\be \label{eq:tilted_expectation_asymp_quenched}
	\log \mathbb E_{\nu_N^\mu}\!\left[e^{\alpha_N S_\varepsilon}\right]
	=
	\alpha_N M_N\mu_1\mu_2+o(\alpha_N M_N).
	\ee
\end{corollary}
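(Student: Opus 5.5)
The plan mirrors the proof of Corollary~\ref{cor:tilted_expectation}, with $N/2$ replaced by the number $M_N$ of retained edges and Lemma~\ref{lem:cross_conc} replaced by the preceding lemma for the diluted matching. Fix $\delta>0$ and define the good set
\[
G^\varepsilon_{\mu,\delta}:=\Bigl\{\sigma\in\Omega_N(\mu):\ \Bigl|\tfrac{1}{M_N}S_\varepsilon(\sigma)-\mu_1\mu_2\Bigr|\le\delta\Bigr\}.
\]
By the preceding lemma, applied with $\eps=\delta$, we have $\nu_N^\mu\bigl((G^\varepsilon_{\mu,\delta})^c\bigr)\le e^{-c_\delta M_N}$ for $N$ large, with $c_\delta:=\widetilde c_\delta$. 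On $G^\varepsilon_{\mu,\delta}$ the exponent $\alpha_N S_\varepsilon$ lies between $\alpha_N M_N(\mu_1\mu_2-\delta)$ and $\alpha_N M_N(\mu_1\mu_2+\delta)$.

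For the lower bound, restrict the expectation to the good set. This gives
\[
\mathbb E_{\nu_N^\mu}\bigl[e^{\alpha_N S_\varepsilon}\bigr]\ge e^{\alpha_N M_N(\mu_1\mu_2-\delta)}\bigl(1-e^{-c_\delta M_N}\bigr).
\]
For the upper bound, split into the good set and its complement. On the complement, use the trivial bound $|S_\varepsilon|\le M_N$, so $e^{\alpha_N S_\varepsilon}\le e^{\alpha_N M_N}$. Multiplying by the probability of the complement yields the second term $e^{\alpha_N M_N-c_\delta M_N}$.

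For \eqref{eq:tilted_expectation_asymp_quenched}, take logarithms. The lower bound gives $\alpha_N M_N\mu_1\mu_2-\delta\alpha_N M_N+\log(1-e^{-c_\delta M_N})$, and the last term is $o(1)$ because $M_N\to\infty$. The upper bound becomes
\[
\alpha_N M_N(\mu_1\mu_2+\delta)+\log\Bigl(1+\exp\bigl(\alpha_N M_N(1-\mu_1\mu_2-\delta)-c_\delta M_N\bigr)\Bigr).
\]
Since $\alpha_N\to0$, the exponent inside is eventually at most $-\tfrac{c_\delta}{2}M_N\to-\infty$, so the logarithm is $o(1)$. Dividing by $\alpha_N M_N$ gives $\limsup$ of the normalized error at most $\delta$, provided the $o(1)$ remainders are also $o(\alpha_N M_N)$. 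Letting $\delta\downarrow0$ then concludes.

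The main obstacle is this last normalization: we need $o(1)=o(\alpha_N M_N)$. The lower-bound remainder $\log(1-e^{-c_\delta M_N})\approx -e^{-c_\delta M_N}$ is exponentially small. It is $o(\alpha_N M_N)$ as soon as $\alpha_N M_N$ does not decay exponentially in $M_N$, which holds on the good event \eqref{eq:MN_good_event} since $\alpha_N$ is polynomial in $N$. The upper-bound remainder is likewise bounded by $e^{-c_\delta M_N/2}$, which is equally negligible. I would state these bounds uniformly in typical admissible $\mu$; this works because $c_\delta$ from the lemma does not depend on $\mu\in A_\kappa$, and $\mu_1\mu_2$ is bounded. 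The proof then follows Corollary~\ref{cor:tilted_expectation} verbatim.
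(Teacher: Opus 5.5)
Your two displayed bounds and the overall structure (good set from the preceding lemma, the trivial bound $|S_\varepsilon|\le M_N$ on its complement, then logarithms and $\delta\downarrow 0$) coincide with the paper's proof. The gap is in the final normalization. You rightly single it out as the main obstacle but do not close it. The paper's one-line remark that the second term is ``exponentially negligible relative to the scale $\alpha_N M_N$'' has the same defect.

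To get the normalized error below $\delta$ in the limit, you need, for every fixed $\delta>0$, $e^{-c_\delta M_N}=o(\alpha_N M_N)$, i.e.\ $c_\delta M_N+\log(\alpha_N M_N)\to\infty$. Polynomiality in $N$ does not give this:
\begin{itemize}
\item On \eqref{eq:MN_good_event}, with $\alpha_N p(N)=N^{-\rho}$, one has $\alpha_N M_N\asymp N^{1-\rho}$. In regime (2) of Theorem~\ref{theo:theo2} ($\rho>1$) this tends to $0$ polynomially.
\item Meanwhile $e^{-c_\delta M_N}$, with $M_N\asymp Np(N)/2$, decays only like $N^{-c_\delta C/2}$ when $Np(N)\approx C\log N$, and more slowly if merely $Np(N)\to\infty$.
\item Since the concentration rate $c_\delta$ is at best of order $\delta^2$, the requirement fails for small $\delta$ even under the almost-sure hypothesis $Np(N)\ge C\log N$.
\item Under the corollary's own hypotheses ($\alpha_N\to0$ and $M_N\to\infty$ only), $\alpha_N M_N$ can be far smaller than $e^{-c_\delta M_N}$.
\end{itemize}
So the good/bad split cannot deliver the $o(\alpha_N M_N)$ form in general.

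There are two ways out.

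\emph{Weaker statement.} For Theorem~\ref{theo:theo2} you only need the error bound $\delta\alpha_N M_N+O(e^{-c_\delta M_N})$. In regime (1) this is $o(\alpha_N M_N)$ because $\alpha_N M_N\to\infty$. In regime (2) it tends to $0$, which is all the ratio argument uses.

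\emph{Direct argument.} If you want the statement as written:
\begin{itemize}
\item Under $\nu_N^\mu$ the blocks are independent with exchangeable spins, so $\mathbb E_{\nu_N^\mu}[S_\varepsilon]=M_N\mu_1\mu_2$ exactly. Jensen then gives the lower bound with no error term.
\item For the upper bound, condition on $\sigma|_{B_2}$. Then $S_\varepsilon$ is an affine function of a sample without replacement from a population with only $M_N$ nonzero entries, all in $\{-1,1\}$.
\item Hoeffding's comparison with sampling with replacement, plus Bennett's inequality, bounds the conditional log-Laplace transform by $\alpha_N\mu_1\sum_{i\in I}\sigma_{i+N/2}+C\alpha_N^2M_N$, where $I=\{i\in B_1:\varepsilon_i=1\}$.
\item The same argument on block $2$ gives $\log\mathbb E_{\nu_N^\mu}[e^{\alpha_N S_\varepsilon}]\le\alpha_N M_N\mu_1\mu_2+C\alpha_N^2M_N$.
\end{itemize}
This is an error $O(\alpha_N^2 M_N)=o(\alpha_N M_N)$, uniformly in $\mu$.
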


\begin{proof}
	Fix $\delta>0$ and define
	\[
	G_{\mu,\delta}^\varepsilon
	:=
	\left\{
	\sigma\in\Omega_N(\mu):
	\left|\frac{1}{M_N}S_\varepsilon(\sigma)-\mu_1\mu_2\right|\le \delta
	\right\}.
	\]
	By the previous lemma, 
	\[
	\nu_N^\mu\bigl((G_{\mu,\delta}^\varepsilon)^c\bigr)\le e^{-c_\delta M_N}
	\]
	for a constant $c_\delta>0$ and for all $N$ sufficiently large.
	
	On $G_{\mu,\delta}^\varepsilon$ we have
	\[
	\exp\!\left(\alpha_N M_N(\mu_1\mu_2-\delta)\right)
	\le e^{\alpha_N S_\varepsilon}
	\le
	\exp\!\left(\alpha_N M_N(\mu_1\mu_2+\delta)\right).
	\]
	Therefore,
	\begin{align*}
		\mathbb E_{\nu_N^\mu}\!\left[e^{\alpha_N S_\varepsilon}\right]
		&\ge
		\mathbb E_{\nu_N^\mu}\!\left[e^{\alpha_N S_\varepsilon}\mathbf 1_{G_{\mu,\delta}^\varepsilon}\right] \\
		&\ge
		\exp\!\left(\alpha_N M_N(\mu_1\mu_2-\delta)\right)
		\nu_N^\mu(G_{\mu,\delta}^\varepsilon),
	\end{align*}
	which proves the lower bound.
	
	Similarly,
	\begin{align*}
		\mathbb E_{\nu_N^\mu}\!\left[e^{\alpha_N S_\varepsilon}\right]
		&\le
		\mathbb E_{\nu_N^\mu}\!\left[e^{\alpha_N S_\varepsilon}\mathbf 1_{G_{\mu,\delta}^\varepsilon}\right]
		+
		\mathbb E_{\nu_N^\mu}\!\left[e^{\alpha_N S_\varepsilon}\mathbf 1_{(G_{\mu,\delta}^\varepsilon)^c}\right] \\
		&\le
		\exp\!\left(\alpha_N M_N(\mu_1\mu_2+\delta)\right)
		+
		e^{\alpha_N M_N}\nu_N^\mu\bigl((G_{\mu,\delta}^\varepsilon)^c\bigr),
	\end{align*}
	which yields the upper bound. Since $\alpha_N\to0$, the second term is
	exponentially negligible relative to the scale $\alpha_N M_N$, and
	\eqref{eq:tilted_expectation_asymp_quenched} follows.
\end{proof}

\medskip

\begin{proof}[Proof of Theorem~\ref{theo:theo2}]
	With $\varepsilon$-probability tending to one or $\P_\varepsilon$-almost surely (depending on which condition in Theorem \ref{theo:theo2} we assume), Proposition~\ref{prop:atypical_mu_thinned}
	implies
	\[
	\mu_{N,\alpha_N,\beta}^\varepsilon\bigl(m^N\notin A_\kappa\bigr)\longrightarrow 0,
	\]
	so it suffices to analyze the Gibbs weights of admissible magnetization vectors
	$\mu=(\mu_1,\mu_2)\in A_\kappa$.
	
	Fix such a $\mu$. By \eqref{eq:mu-decomp-quenched} and
	Corollary~\ref{cor:tilted_expectation_quenched} (since by \eqref{eq:MN_good_event} $M_N\to \infty$ with $\eps$-probability tending to one or $\P_\eps$-almost surely),
	\begin{equation}\label{eq:mu_weight_asymp_quenched}
		\mu_{N,\alpha_N,\beta}^\varepsilon(m^N=\mu)
		=
		\frac{|\Omega_N(\mu)|}{Z_{N,\alpha_N,\beta}^\varepsilon}
		\exp\!\left(
		\frac{\beta N}{8}(\mu_1^2+\mu_2^2)
		+
		\alpha_N M_N\mu_1\mu_2
		+
		o(\alpha_N M_N)
		\right).
	\end{equation}

	Now let $\mu=(\mu_1,\mu_2)\in A_\kappa$ and $\tilde\mu=(\mu_1,-\mu_2)\in A_\kappa$.
	As in the proof of Theorem~\ref{theo:theo1}, we have
	\[
	|\Omega_N(\mu)|=|\Omega_N(\tilde\mu)|
	\qquad\text{and}\qquad
	\mu_1^2+\mu_2^2=\mu_1^2+(-\mu_2)^2.
	\]
	Hence \eqref{eq:mu_weight_asymp_quenched} yields
	\[
	\frac{\mu_{N,\alpha_N,\beta}^\varepsilon(m^N=\mu)}
	{\mu_{N,\alpha_N,\beta}^\varepsilon(m^N=\tilde\mu)}
	=
	\exp\!\left(2\alpha_N M_N\mu_1\mu_2+o(\alpha_N M_N)\right).
	\]
	
	For $\mu\in A_\kappa$, we have
	\[
	\mu_i=\pm m^*(\beta/2)+O(N^{-\kappa}), \qquad i=1,2.
	\]
	Therefore, the ratio between an aligned and an anti-aligned admissible
	magnetization in $A_\kappa$ is of the form
	\[
	\exp\!\left(2\alpha_N M_N (m^*(\beta/2))^2 + o(\alpha_N M_N)\right).
	\]
	
	By spin-flip symmetry, the two aligned wells have equal total quenched Gibbs
	mass, and the two anti-aligned wells have equal total quenched Gibbs mass.
	Since the number of admissible magnetization values in each well grows at most
	polynomially in $N$, this polynomial multiplicity does not affect the above
	exponential comparison.
	
	Finally, by \eqref{eq:MN_good_event}, with $\varepsilon$-probability tending to one, or
	$\P_\eps$-almost surely (depending on which condition in Theorem \ref{theo:theo2} we assume) 
     
	\begin{equation}\label{eq:MN_interval}
	    M_N \in \left[ \frac N2 p(N)(1-\delta),  \frac N2 p(N)(1+\delta) \right].
	\end{equation}
	Hence the decisive scale is $\alpha_N p(N)N$.
	
	If $\alpha_N p(N)N\to\infty$, then the aligned wells dominate and the quenched
	distribution of $m^N$ converges to
	\[
	\frac12\Bigl(
	\delta_{(m^*(\beta/2),m^*(\beta/2))}
	+
	\delta_{(-m^*(\beta/2),-m^*(\beta/2))}
	\Bigr).
	\]
	If $\alpha_N p(N)N\to0$, then the ratio tends to $1$, and by symmetry all four
	wells carry equal asymptotic mass. Hence the quenched distribution of $m^N$
	converges to
	\[
	\frac14\Bigl(
	\delta_{(m^*(\beta/2),m^*(\beta/2))}
	+\delta_{(m^*(\beta/2),-m^*(\beta/2))}
	+\delta_{(-m^*(\beta/2),m^*(\beta/2))}
	+\delta_{(-m^*(\beta/2),-m^*(\beta/2))}
	\Bigr)
	\]
	with $\varepsilon$-probability tending to one or $\P_\varepsilon$-almost surely depending on which condition in Theorem \ref{theo:theo2} we assume. This proves convergence in distribution (in probability or almost surely). 

    \medskip

\end{proof}

\section{Proof of Theorems \ref{thm:Blockspin} and \ref{thm:PhaseTransition}}

Let us start by recalling the definition of the Hamiltonian in the three-block model
$$
H_{N,\beta,\alpha_N}(m) = - \sum_{k=1}^3 |B_k|\frac{\beta}{2}m_k^2 - \alpha_N\sqrt{b_N\frac{N-b_N}{2}}(m_1+m_3)m_2
$$
and note that we can write $H_{N,\beta,\alpha_N}$ as the sum of three Hamiltonian of three standard Curie-Weiss models (on the respective blocks) plus the inter-block interaction. More precisely,
$$
H_{N,\beta,\alpha_N}(m) = \overline{H}_{\frac{N-b_N}{2}, \beta}(m_1) + \overline{H}_{b_N, \beta}(m_2) + \overline{H}_{\frac{N-b_N}{2},\beta}(m_3) - \alpha_N\sqrt{b_N \frac{N-b_N}{2}}m_2(m_1+m_3),
$$
where we denote by $\overline{H}_{n , \beta}$ the Hamiltonian of a standard Curie-Weiss model of size $n$ at inverse temperature $\beta$. 
We will start by showing that the magnetizations of the first and of the third block converge to $\pm m^*$ in all of the regimes (i.e.\ regardless of the limit behavior of $m_2$). In the following for any $\eps >0$, we will use the abbreviation
\begin{equation}\label{def:Aeps}
	A_\eps = [m^*-\eps, m^* + \eps]\cup [-m^*-\eps, -m^*+ \eps] .
\end{equation}
The first observation is:
\begin{lemma}\label{lemma:m1}
	For any $0<\eps< \min\{m^*, 1-m^*\}$
	$$
	\mu_{N,\beta, \alpha_N}\left( m_1 \notin A_\eps \right) \le \exp\Bigl\{-n_N\bigl(c_\beta \varepsilon^2+o(1)\bigr)\Bigr\} \longrightarrow 0,
	$$
	as $N\to \infty$, for a constant $c_\beta >0$. In other words, the magnetization of the first block converges in probability under the Gibbs measure to $\pm m^*$. In particular, by exchangeability, the same is true also for $m_3$ and by a union bound we obtain
	\begin{flalign*}
		\mu_{N,\beta, \alpha_N}\left( m_1,m_3 \in A_\eps \right) \ge1-  2\mu_{N,\beta, \alpha_N}\left( m_1 \notin A_\eps\right)   \longrightarrow 1.
	\end{flalign*}
	
\end{lemma}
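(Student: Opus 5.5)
Write $n_N:=|B_1|=\frac{N-b_N}{2}$. The plan is to treat block $B_1$ as a Curie--Weiss model at inverse temperature $\beta>1$ in a random external field produced by the coupling. Conditional on $(m_2,m_3)$, the coupling term $\alpha_N\sqrt{b_N n_N}\,m_1m_2$ acts on $m_1$ like a field
\[
h_N:=\alpha_N\sqrt{b_N/n_N}\,m_2,\qquad |h_N|\le \alpha_N\sqrt{b_N/n_N}\longrightarrow 0,
\]
since $\alpha_N\to0$ and $b_N/N\to0$. Hence it suffices to prove a bound that is uniform in the conditioning.

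First, I would show that uniformly in $(m_2,m_3)$
\[
\mu_{N,\beta,\alpha_N}\bigl(m_1\notin A_\eps \,\big|\, m_2,m_3\bigr)\le \exp\{-n_N(c_\beta\eps^2+o(1))\},
\]
with the $o(1)$ independent of $(m_2,m_3)$. Integrating over $(m_2,m_3)$ then gives the claim. Conditionally, the law of $m_1$ is proportional to
\[
\binom{n_N}{n_N\frac{1+m_1}{2}}\exp\Bigl\{n_N\bigl(\tfrac{\beta}{2}m_1^2+h_Nm_1\bigr)\Bigr\}.
\]
By Stirling's formula this equals $\exp\{-n_N I_{h_N}(m_1)+O(\log n_N)\}$, where
\[
I_h(x)=\tfrac{1+x}{2}\log(1+x)+\tfrac{1-x}{2}\log(1-x)-\tfrac{\beta}{2}x^2-hx .
\]
The normalising sum contains the term at the lattice point nearest $\pm m^*$. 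Therefore the conditional probability of $\{m_1\notin A_\eps\}$ is at most
\[
(n_N+1)\exp\Bigl\{-n_N\Bigl(\inf_{x\notin A_\eps}I_{h_N}(x)-\inf_x I_{h_N}(x)\Bigr)+O(\log n_N)\Bigr\}.
\]

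Second, I would handle the field by direct comparison. Since $|I_{h}-I_0|\le |h|\le \alpha_N\sqrt{b_N/n_N}=o(1)$ uniformly on $[-1,1]$, the exponent differs from the unperturbed one $\inf_{x\notin A_\eps}I_0-\inf I_0$ by at most $2\sup|h_N|=o(1)$. This is the same perturbation trick as in Proposition~\ref{prop:atypical_mu}, now applied to the exponent rate. The polynomial prefactor and the lattice rounding contribute $O(\log n_N/n_N)=o(1)$ to the rate.

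Third, I need the quantitative gap
\[
\inf_{x\notin A_\eps}I_0(x)-I_0(m^*)\ge c_\beta\eps^2 .
\]
For $\beta>1$, the function $I_0$ is even and its only minimisers are $\pm m^*$. The second derivative is $I_0''(m^*)=\frac{1}{1-(m^*)^2}-\beta>0$, because $\beta(1-(m^*)^2)<1$ at the nonzero fixed point of $\tanh(\beta\,\cdot)$. A Taylor expansion gives quadratic growth near $\pm m^*$. Away from $\pm m^*$, on the compact set $[-1,1]\setminus A_\eps$, the infimum strictly exceeds $I_0(m^*)$, using continuity and uniqueness of the minimisers. The restriction $\eps<\min\{m^*,1-m^*\}$ ensures $A_\eps\subset[-1,1]$ consists of two disjoint intervals, so both regimes fit into one constant $c_\beta$ after shrinking it; the value $x=0$ gives a gap of order one, which is $\ge c\eps^2$.

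I expect the main obstacle to be keeping every estimate uniform in the conditioning variables $(m_2,m_3)$. The field bound $|h_N|=o(1)$ is uniform since $|m_2|\le1$, so this reduces to bookkeeping. Finally, I would get the statement for $m_3$ by the symmetry $B_1\leftrightarrow B_3$ of the Hamiltonian, and the last display by a union bound.
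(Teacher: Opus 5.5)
Your proposal is correct and is essentially the paper's argument. In both, the coupling shifts the exponent by at most $2\alpha_N\sqrt{b_Nn_N}=o(n_N)$. This shift is swamped by the Curie--Weiss gap $n_Nc_\beta\eps^2$ from a Taylor expansion at $\pm m^*$, while Stirling errors and lattice counts contribute only polynomial factors.

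The only difference is bookkeeping. You condition on $(m_2,m_3)$, so the $m_2,m_3$ sums cancel exactly and $m_1$ sees a Curie--Weiss model with field $|h_N|\le\alpha_N\sqrt{b_N/n_N}$. The paper instead lower-bounds $Z_{N,\beta,\alpha_N}$ by the single summand at the lattice maximizers and absorbs the $m_2,m_3$ sums into the prefactor $(b_N+1)(n_N+1)$.
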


\begin{proof}
	Let
	\[
	n_N:=|B_1|=|B_3|=\frac{N-b_N}{2},
	\qquad |B_2|=b_N.
	\]
	The block magnetizations take values in
	\[
	\mathcal A_N^{(k)}=\left\{-1+\frac{2\ell}{|B_k|}:\ \ell=0,\dots,|B_k|\right\},
	\qquad k=1,2,3.
	\]
	
	Recall that the Gibbs measure can be written in terms of the magnetization vector
	$m=(m_1,m_2,m_3)$, and that
	\[
	Z_{N,\beta,\alpha_N}
	=
	\sum_{m_1,m_2,m_3}
	\exp\{-H_{N,\beta,\alpha_N}(m)\}
	\frac{1}{2^N}
	\prod_{k=1}^3 \binom{|B_k|}{|B_k|\frac{1+m_k}{2}}.
	\]
	Using Stirling's formula, we may rewrite this as
	\begin{multline*}
	Z_{N,\beta,\alpha_N}
	=\\
	\sum_{m_1,m_2,m_3}
	\exp\Bigl\{
	n_N\bigl(F_\beta(m_1)+F_\beta(m_3)\bigr)
	+b_NF_\beta(m_2)
	+\alpha_N\sqrt{b_Nn_N}\,m_2(m_1+m_3)
	-\sum_{k=1}^3 R_N^{(k)}(m_k)
	\Bigr\},
	\end{multline*}
	where
	\[
	F_\beta(x):=\frac{\beta}{2}x^2-\log 2+s(x),
	\]
	\[
	s(x):=-\frac{1+x}{2}\log \frac{1+x}{2}
	-\frac{1-x}{2}\log \frac{1-x}{2},
	\]
	and
	\[
	R_N^{(k)}(m)
	=
	\frac12\log\!\Bigl(|B_k|(1-m^2)\frac{\pi}{2}\Bigr)
	+O\!\left(\frac1{|B_k|}\right)
	=
	o(|B_k|).
	\]
	
	Let $\widetilde m_N$ be a maximizer of $F_\beta$ on $\mathcal A_N^{(1)}$, and
	let $\widetilde m_{b_N}$ be a maximizer of $F_\beta$ on $\mathcal A_N^{(2)}$.
	Since $F_\beta$ has exactly two maximizers $\pm m^*$ on $[-1,1]$, we have
	\[
	\widetilde m_N\to \pm m^*
	\qquad \text{and }\quad
	\widetilde m_{b_N}\to \pm m^*
	\]
	(i.e.\ up to the choice of sign); in particular,
	\[
	F_\beta(\widetilde m_N)\to F_\beta(m^*),
	\qquad
	F_\beta(\widetilde m_{b_N})\to F_\beta(m^*).
	\]
	
	We first derive a lower bound on the partition function by keeping only the
	single summand $(m_1,m_2,m_3)=(\widetilde m_N,\widetilde m_{b_N},\widetilde m_N)$:
	\begin{equation}\label{eq:Zlower_m1}
		Z_{N,\beta,\alpha_N}
		\ge
		\exp\Bigl\{
		2n_NF_\beta(\widetilde m_N)
		+b_NF_\beta(\widetilde m_{b_N})
		+2\alpha_N\sqrt{b_Nn_N}\,\widetilde m_N\widetilde m_{b_N}
		-2R_N^{(1)}(\widetilde m_N)-R_N^{(2)}(\widetilde m_{b_N})
		\Bigr\}.
	\end{equation}
	
	Next, we estimate the numerator of
	$\mu_{N,\beta,\alpha_N}(m_1\notin A_\varepsilon)$.
	For all $m_1,m_2,m_3\in[-1,1]$,
	\[
	m_2(m_1+m_3)\le 2,
	\]
	hence
	\[
	\alpha_N\sqrt{b_Nn_N}\,m_2(m_1+m_3)
	-2\alpha_N\sqrt{b_Nn_N}\,\widetilde m_N\widetilde m_{b_N}
	\le
	2\alpha_N\sqrt{b_Nn_N}\bigl(1-\widetilde m_N\widetilde m_{b_N}\bigr)
	\le
	2\alpha_N\sqrt{b_Nn_N}.
	\]
	Therefore, using \eqref{eq:Zlower_m1},
	\begin{align*}
		\mu_{N,\beta,\alpha_N}(m_1\notin A_\varepsilon)
		&\le
		\sum_{m_1\notin A_\varepsilon}\sum_{m_2,m_3}
		\exp\Bigl\{
		n_N\bigl(F_\beta(m_1)-F_\beta(\widetilde m_N)\bigr)
		+n_N\bigl(F_\beta(m_3)-F_\beta(\widetilde m_N)\bigr) \\
		&\hspace{5em}
		+b_N\bigl(F_\beta(m_2)-F_\beta(\widetilde m_{b_N})\bigr)
		+2\alpha_N\sqrt{b_Nn_N}
		+2R_N^{(1)}(\widetilde m_N)+R_N^{(2)}(\widetilde m_{b_N})
		\Bigr\}.
	\end{align*}
	Since $\widetilde m_N$ and $\widetilde m_{b_N}$ maximize $F_\beta$ on the
	corresponding discrete sets we obtain,
	\[
	F_\beta(m_3)-F_\beta(\widetilde m_N)\le 0
	\qquad \text{as well as }\quad 
	F_\beta(m_2)-F_\beta(\widetilde m_{b_N})\le 0.
	\]
	Thus
	\begin{align}
		\mu_{N,\beta,\alpha_N}(m_1\notin A_\varepsilon)
		&\le
		|\mathcal A_N^{(2)}|\,|\mathcal A_N^{(3)}|
		\exp\Bigl\{
		2\alpha_N\sqrt{b_Nn_N}
		+2R_N^{(1)}(\widetilde m_N)+R_N^{(2)}(\widetilde m_{b_N})
		\Bigr\}
		\nonumber\\
		&\qquad\qquad\times
		\sum_{m_1\notin A_\varepsilon}
		\exp\Bigl\{
		n_N\bigl(F_\beta(m_1)-F_\beta(\widetilde m_N)\bigr)
		\Bigr\}.
		\label{eq:m1_reduction}
	\end{align}
    It remains to estimate the last sum. By a Taylor expansion of $F_\beta$ around its minimizers we obtain that there exists a constant $c_\beta >0$ such that for every
	$0<\varepsilon<\min\{m^*,1-m^*\}$
	\[
	\sup_{x\notin A_\varepsilon} F_\beta(x)
	\le
	F_\beta(m^*)-c_\beta \varepsilon^2.
	\]
	Because $F_\beta(\widetilde m_N)\to F_\beta(m^*)$, it follows that uniformly in
	$m_1\notin A_\varepsilon$,
	\[
	F_\beta(m_1)-F_\beta(\widetilde m_N)
	\le
	-c_\beta \varepsilon^2+o(1).
	\]
	Hence
	\[
	\sum_{m_1\notin A_\varepsilon}
	\exp\Bigl\{
	n_N\bigl(F_\beta(m_1)-F_\beta(\widetilde m_N)\bigr)
	\Bigr\}
	\le
	|\mathcal A_N^{(1)}|
	\exp\Bigl\{-n_N\bigl(c_\beta \varepsilon^2+o(1)\bigr)\Bigr\}.
	\]
	Inserting this into \eqref{eq:m1_reduction} yields
	\begin{align*}
		\mu_{N,\beta,\alpha_N}(m_1\notin A_\varepsilon)
		&\le
		|\mathcal A_N^{(1)}|\,|\mathcal A_N^{(2)}|\,|\mathcal A_N^{(3)}|
		\exp\Bigl\{
		2\alpha_N\sqrt{b_Nn_N}
		+2R_N^{(1)}(\widetilde m_N)+R_N^{(2)}(\widetilde m_{b_N})
		\Bigr\}
		\\
		&\qquad\qquad\times
		\exp\Bigl\{-n_N\bigl(c_\beta \varepsilon^2+o(1)\bigr)\Bigr\}.
	\end{align*}
	Now
	\[
	|\mathcal A_N^{(1)}|=|\mathcal A_N^{(3)}|=n_N+1,
	\qquad
	|\mathcal A_N^{(2)}|=b_N+1,
	\]
	and
	\[
	R_N^{(1)}(\widetilde m_N)=O(\log n_N)
	\qquad \text{and}\quad 
	R_N^{(2)}(\widetilde m_{b_N})=O(\log b_N).
	\]
	Therefore the prefactor is at most polynomial in $N$ and $b_N$, whereas
	\[
	2\alpha_N\sqrt{b_Nn_N}=o(n_N),
	\]
	because $n_N\sim N/2$, $b_N=o(N)$, and $\alpha_N\to0$.
	Hence
	\[
	\mu_{N,\beta,\alpha_N}(m_1\notin A_\varepsilon)
	\le
	\operatorname{poly}(N,b_N)\,
	\exp\Bigl\{-n_N\bigl(c_\beta \varepsilon^2+o(1)\bigr)\Bigr\}
	\longrightarrow 0.
	\]
	
	This proves the first claim. The corresponding statement for $m_3$ follows by
	symmetry, and the joint claim follows by the union bound.
\end{proof}

By Lemma \ref{lemma:m1}, we know that the magnetizations $m_1$ and $m_3$ converge in probability to $\pm m^*$. In the next lemma, we show that if $\lim_{N\to \infty} \alpha_N \sqrt{b_N N} = \infty$, then the interaction is strong enough such that in the limit $m_1$ and $m_3$ have the same sign.

\begin{lemma}\label{lemma:m1m3Regime1and2}
	If $\lim_{N\to \infty} \alpha_N \sqrt{b_N N} = \infty$, then for any $0<\eps< \frac{(m^*)^2}{1+m^*}$,
	$$
	\frac{\mu_{N,\beta, \alpha_N}\left( m_1 \in [m^*-\eps, m^*+\eps], m_3 \in [-m^*-\eps, -m^*+\eps]  \right)}{\mu_{N,\beta, \alpha_N}\left( m_1,m_3 \in [m^*-\eps, m^*+\eps]\right)} \longrightarrow 0.
	$$
	In particular, since the Gibbs measure is invariant under a global spin flip, it follows that
	\begin{flalign*}
		&\frac{\mu_{N,\beta, \alpha_N}\left( m_1, m_3 \in A_\eps, \, m_1m_3 <0 \right)}{\mu_{N,\beta, \alpha_N}\left( m_1,m_3 \in A_\eps, \, m_1m_3 >0\right)} \\
		\le & 4  \frac{\mu_{N,\beta, \alpha_N}\left( m_1 \in [m^*-\eps, m^*+\eps], m_3 \in [-m^*-\eps, -m^*+\eps]  \right)}{\mu_{N,\beta, \alpha_N}\left( m_1,m_3 \in [m^*-\eps, m^*+\eps]\right)}  \longrightarrow 0.
	\end{flalign*}
	
\end{lemma}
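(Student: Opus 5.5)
The plan is to work with the exact combinatorial form of the Gibbs measure in terms of $m=(m_1,m_2,m_3)$, with no Stirling approximation. For $k=1,3$ let $w(m_k):=\exp\{\frac{\beta}{2}n_Nm_k^2\}\binom{n_N}{n_N\frac{1+m_k}{2}}$. Set $a_N:=\alpha_N\sqrt{b_Nn_N}$, and
$$
G(s):=\sum_{m_2}\exp\Bigl\{\tfrac{\beta}{2}b_Nm_2^2+a_N s\,m_2\Bigr\}\binom{b_N}{b_N\frac{1+m_2}{2}},
$$
the sum running over the admissible values of $m_2$. Summing out $m_2$ first, the numerator in the lemma equals, up to the common factor $1/(2^NZ_{N,\beta,\alpha_N})$,
$$
\sum_{m_1\in I_+}\sum_{m_3\in I_-}w(m_1)w(m_3)\,G(m_1+m_3),
$$
where $I_\pm:=[\pm m^*-\eps,\pm m^*+\eps]$. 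The denominator is the same sum with $m_3\in I_+$.

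Both $w$ and the binomial coefficients are invariant under $m_3\mapsto -m_3$, and this map is a bijection between the admissible values in $I_-$ and those in $I_+$. Hence the numerator equals
$$
\sum_{m_1,m_3\in I_+}w(m_1)w(m_3)\,G(m_1-m_3).
$$
It therefore suffices to prove
$$
\sup_{m_1,m_3\in I_+}\frac{G(m_1-m_3)}{G(m_1+m_3)}\longrightarrow 0.
$$
For $m_1,m_3\in I_+$ we have $|s|:=|m_1-m_3|\le 2\eps$ and $t:=m_1+m_3\ge 2m^*-2\eps$. Thus $t-|s|\ge 2m^*-4\eps>0$, because the hypothesis $\eps<(m^*)^2/(1+m^*)$ implies $\eps<m^*/2$.

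To compare $G(s)$ with $G(t)$, write $G_+(u)$ for the part of the sum over $m_2\ge0$. Since the untilted summand is even in $m_2$, $G(s)\le 2G_+(|s|)$ and $G(t)\ge G_+(t)$. Let $P_u$ denote the probability measure on $\{m_2\ge 0\}$ with weights proportional to the summands of $G_+(u)$. Then
$$
\frac{G_+(t)}{G_+(|s|)}=E_{P_{|s|}}\bigl[e^{a_N(t-|s|)m_2}\bigr]\ge e^{a_N(t-|s|)\delta}\,P_{|s|}(m_2\ge\delta)
$$
for any fixed $\delta\in(0,m^*)$. The remaining point is a uniform lower bound on $P_{|s|}(m_2\ge\delta)$. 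Tilting by the increasing factor $e^{a_N|s|m_2}$ on a totally ordered set produces a stochastically larger measure (the likelihood ratio is monotone). Hence $P_{|s|}(m_2\ge\delta)\ge P_0(m_2\ge\delta)$. Under $P_0$, which is the Curie–Weiss law of a block of size $b_N\to\infty$ conditioned on $m_2\ge 0$, the argument of Lemma~\ref{lemma:m1} applied to a single block (without interaction) gives $P_0(m_2\ge\delta)\to1$.

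Combining the pieces,
$$
\frac{G(m_1-m_3)}{G(m_1+m_3)}\le 2(1+o(1))\,e^{-a_N(2m^*-4\eps)\delta},
$$
uniformly in $m_1,m_3\in I_+$. This tends to $0$ because $a_N\sim\alpha_N\sqrt{b_NN/2}\to\infty$ by assumption, which proves the main claim.

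For the ``in particular'' part, split the event $\{m_1m_3<0\}$ into $\{m_1\in I_+,m_3\in I_-\}$ and its global spin flip, and the event $\{m_1m_3>0\}$ into $\{m_1,m_3\in I_+\}$ and its flip. Spin-flip invariance then gives the ratio bound, the constant $4$ being generous. The main obstacle I expect is the uniform lower bound on $P_{|s|}(m_2\ge\delta)$ when the field $a_N|s|$ may be large. The monotone-tilt (stochastic domination) argument handles this without any quantitative control of the field, so I would try it first.
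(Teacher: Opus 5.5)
Your proof is correct, but it takes a different route from the paper's.

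\textbf{The paper's route.} The paper never sums out $m_2$ exactly. It bounds the interaction crudely on each side:
\begin{itemize}
\item in the numerator it uses $|m_2|\le 1$ and $|m_1+m_3|\le 2\eps$, which gives a factor at most $e^{2a_N\eps}$;
\item in the denominator it restricts to $m_2\ge m^*$, which gives a factor at least $e^{2a_Nm^*(m^*-\eps)}$.
\end{itemize}
After that the untilted block weights factor, and the $m_1,m_3$ sums cancel by the same reflection $m_3\mapsto -m_3$ that you use. What remains is $\mu_{b_N,\beta}(m_2\ge m^*)^{-1}\exp\{2a_N(\eps(1+m^*)-(m^*)^2)\}$. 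The hypothesis $\eps<(m^*)^2/(1+m^*)$ is exactly what makes this exponent negative.

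\textbf{Your route.} You keep the exact tilt through $G(m_1\pm m_3)$ and fold onto $m_2\ge 0$ by evenness. You then use the monotone-likelihood-ratio domination $P_{|s|}\succeq P_0$, which controls the numerator's field $a_N|s|$ uniformly even though that field may diverge. This costs a few more lines, but it buys two things.
\begin{itemize}
\item A stronger statement: it works for any $\eps<m^*/2$, with the explicit rate $e^{-a_N(2m^*-4\eps)\delta}$.
\item Weaker input: it needs only the large-deviation fact $P_0(m_2\ge\delta)\to 1$ for fixed $\delta<m^*$.
\end{itemize}

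By contrast, the paper needs $\mu_{b_N,\beta}(m_2\ge m^*)$ to be bounded away from $0$. That threshold sits at the centre of the fluctuation window of $m_2$. The paper asserts the limit $\tfrac12$, whereas the Gaussian fluctuations of $m_2$ around $m^*$ give $\tfrac14$. That still suffices, but it requires more than the concentration estimate of Lemma~\ref{lemma:m1}; your choice of a strict $\delta<m^*$ avoids the issue entirely.

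Your treatment of the ``in particular'' part is also right. By spin-flip symmetry the second ratio is actually equal to the first, so the constant $4$ is superfluous.
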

\begin{proof}
	First note that
	\[
	\mu_{N,\beta,\alpha_N}\bigl(m_1,m_3\in[m^*-\eps,m^*+\eps]\bigr)
	\ge
	\mu_{N,\beta,\alpha_N}\bigl(m_1,m_3\in[m^*-\eps,m^*+\eps],\ m_2\ge m^*\bigr),
	\]
	and therefore
	\begin{multline*}
		\frac{\mu_{N,\beta,\alpha_N}\bigl(m_1\in[m^*-\eps,m^*+\eps],\ 
			m_3\in[-m^*-\eps,-m^*+\eps]\bigr)}
		{\mu_{N,\beta,\alpha_N}\bigl(m_1,m_3\in[m^*-\eps,m^*+\eps]\bigr)} \\
		\le
		\frac{\mu_{N,\beta,\alpha_N}\bigl(m_1\in[m^*-\eps,m^*+\eps],\ 
			m_3\in[-m^*-\eps,-m^*+\eps]\bigr)}
		{\mu_{N,\beta,\alpha_N}\bigl(m_1,m_3\in[m^*-\eps,m^*+\eps],\ m_2\ge m^*\bigr)}.
	\end{multline*}
	
	On the set
	\[
	\{m_1\in[m^*-\eps,m^*+\eps],\ m_3\in[-m^*-\eps,-m^*+\eps]\}
	\]
	we have
	\[
	|m_1+m_3|\le 2\eps.
	\]
	Hence
	\begin{align}
		&\mu_{N,\beta,\alpha_N}\bigl(m_1\in[m^*-\eps,m^*+\eps],\ 
		m_3\in[-m^*-\eps,-m^*+\eps]\bigr)
		\nonumber\\
		&\le
		\frac{1}{Z_{N,\beta,\alpha_N}}
		\exp\left\{
		2\alpha_N\sqrt{b_N\frac{N-b_N}{2}}\,\eps
		\right\}
		\sum_{m_1,m_2,m_3}
		\mathbf 1_{\{m_1\in[m^*-\eps,m^*+\eps]\}}
		\mathbf 1_{\{m_3\in[-m^*-\eps,-m^*+\eps]\}}
		\nonumber\\
		&\qquad\qquad\times
		\exp\left\{-
		\overline H_{\frac{N-b_N}{2},\beta}(m_1)
		-\overline H_{b_N,\beta}(m_2)
		-\overline H_{\frac{N-b_N}{2},\beta}(m_3)
		\right\}
		\prod_{k=1}^3 \binom{|B_k|}{|B_k|\frac{1+m_k}{2}}.
		\label{eq:num_regime12}
	\end{align}
	
	On the other hand, on the set
	$
	\{m_1,m_3\in[m^*-\eps,m^*+\eps],\ m_2\ge m^*\}
	$
	we have
	$
	m_1+m_3\ge 2(m^*-\eps)$, and
	$
	m_2\ge m^*,
	$
	and therefore
	\[
	m_2(m_1+m_3)\ge 2m^*(m^*-\eps).
	\]
	Thus
	\begin{align}
		&\mu_{N,\beta,\alpha_N}\bigl(m_1,m_3\in[m^*-\eps,m^*+\eps],\ m_2\ge m^*\bigr)
		\nonumber\\
		&\ge
		\frac{1}{Z_{N,\beta,\alpha_N}}
		\exp\left\{
		2\alpha_N\sqrt{b_N\frac{N-b_N}{2}}\,m^*(m^*-\eps)
		\right\}
		\sum_{m_1,m_2,m_3}
		\mathbf 1_{\{m_1,m_3\in[m^*-\eps,m^*+\eps]\}}
		\mathbf 1_{\{m_2\ge m^*\}}
		\nonumber\\
		&\qquad\qquad\times
		\exp\left\{ -
		\overline H_{\frac{N-b_N}{2},\beta}(m_1)
		-\overline H_{b_N,\beta}(m_2)
		-\overline H_{\frac{N-b_N}{2},\beta}(m_3)
		\right\}
		\prod_{k=1}^3 \binom{|B_k|}{|B_k|\frac{1+m_k}{2}}.
		\label{eq:den_regime12}
	\end{align}
	
	Now observe that the decoupled Curie--Weiss weights are symmetric under $m\mapsto -m$. Hence the sum over
	$m_3\in[-m^*-\eps,-m^*+\eps]$ in \eqref{eq:num_regime12} equals the
	corresponding sum over $m_3\in[m^*-\eps,m^*+\eps]$, and similarly for $m_1$.
	Therefore the $m_1$- and $m_3$-sums cancel in the ratio
	\eqref{eq:num_regime12}/\eqref{eq:den_regime12}. For the $m_2$-sum we obtain
	\[
	\sum_{m_2}\exp\{ -\overline H_{b_N,\beta}(m_2)\}
	\binom{b_N}{b_N\frac{1+m_2}{2}}
	=
	Z_{b_N,\beta},
	\]
	whereas the restricted sum in the denominator is
	$
	Z_{b_N,\beta}\,\mu_{b_N,\beta}(m_2\ge m^*).
	$
	Consequently,
	\begin{multline*}
		\frac{\mu_{N,\beta,\alpha_N}\bigl(m_1\in[m^*-\eps,m^*+\eps],\
			m_3\in[-m^*-\eps,-m^*+\eps]\bigr)}
		{\mu_{N,\beta,\alpha_N}\bigl(m_1,m_3\in[m^*-\eps,m^*+\eps],\ m_2\ge m^*\bigr)}
		\\
		\le
		\frac{1}{\mu_{b_N,\beta}(m_2\ge m^*)}
		\exp\left\{
		2\alpha_N\sqrt{b_N\frac{N-b_N}{2}}
		\bigl(\eps-m^*(m^*-\eps)\bigr)
		\right\}.
	\end{multline*}
	
	Since $\beta>1$, the Curie--Weiss magnetization under $\mu_{b_N,\beta}$ converges
	to $\pm m^*$ with equal asymptotic weights, and therefore
	\[
	\mu_{b_N,\beta}(m_2\ge m^*)\longrightarrow \frac12.
	\]
	Moreover,
	$
	\eps-m^*(m^*-\eps)=\eps(1+m^*)-(m^*)^2<0
	$
	by the assumption
	$
	0<\eps<\frac{(m^*)^2}{1+m^*}.
	$
	Since
	\[
	\alpha_N\sqrt{b_NN}\to\infty
	\qquad\text{and}\qquad
	\frac{N-b_N}{N}\to1,
	\]
	the exponential term tends to $0$, and the claim follows.
	
	The final statement is then immediate from global spin-flip symmetry.
\end{proof}

Now that we have established the convergence of $m_1$ and $m_3$ to $m^*$ or to $-m^*$ in the case where $\lim_{N\to \infty}\alpha_N \sqrt{b_NN} = \infty$, regardless of the behavior of $m_2$. This means that effectively, the second block views its larger neighbors as a (random and converging) external magnetic field of strength $\alpha_N\sqrt{b_N\frac{N-b_N}{2}}(m_1+m_3)$. In the following three lemmas, we will show that in the limit it indeed displays the behaviour of a Curie-Weiss model with external magnetic field of strength $\lim_{N\to \infty}\alpha_N\sqrt{b_N\frac{N-b_N}{2}}(m_1+m_3)$. In Lemma \ref{lem:c=infty} (for the case $c=\infty$) and in Lemma \ref{lemma:c<infty} (for the case $c \in [0,\infty)$), we will show that if $m_2$ has the same sign as $m_1$ and $m_3$, then it will converge to $\pm m(c)$ (with the same sign). Afterwards, in Lemma \ref{lem:samesign} we will show that $m_2$ indeed has the same sign as $m_1$ and $m_3$ in the limit.

\begin{lemma}\label{lem:c=infty}
	If $\lim_{N\to \infty}\alpha_N\sqrt{\frac{N}{b_N}}=\infty$ (i.e.\ $c= \infty$), then for any $0< \delta < 1$ and $0<\eps< \min\{m^*, 1-m^*, \frac{m^*}{2}\delta\}$,
	$$
	\frac{\mu_{N,\beta,\alpha_N} \left( m_1,m_3 \in [m^*-\eps,m^*+\eps], m_2< 1-\delta \right)}{\mu_{N,\beta,\alpha_N} \left( m_1,m_3 \in [m^*-\eps,m^*+\eps]\right)} \longrightarrow 0
	$$
	and again by invariance under a global spin flip, this implies that also
	$$
	\frac{\mu_{N,\beta,\alpha_N} \left( m_1,m_3 \in [-m^*-\eps,-m^*+\eps], m_2>- 1+\delta \right)}{\mu_{N,\beta,\alpha_N} \left( m_1,m_3 \in [-m^*-\eps,-m^*+\eps]\right)} \longrightarrow 0.
	$$
\end{lemma}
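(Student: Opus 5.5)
The plan is to condition on $(m_1,m_3)$ and treat the small block as a Curie--Weiss model in a huge external field. Fix admissible values $m_1,m_3\in[m^*-\eps,m^*+\eps]$. By the decomposition of $H_{N,\beta,\alpha_N}$ into three Curie--Weiss Hamiltonians plus the coupling, the Gibbs weight factorizes:
\[
\mu_{N,\beta,\alpha_N}(m_1,m_2,m_3)\propto W_1(m_1)W_3(m_3)\,\exp\bigl\{b_N F_\beta(m_2)-R^{(2)}_N(m_2)+h_N(m_1,m_3)\,m_2\bigr\}.
\]
Here $W_1,W_3$ collect the $m_1$- and $m_3$-dependent Curie--Weiss factors, $F_\beta$ and $R^{(2)}_N$ are as in the proof of Lemma~\ref{lemma:m1}, and $h_N(m_1,m_3):=\alpha_N\sqrt{b_Nn_N}\,(m_1+m_3)$ with $n_N=\frac{N-b_N}{2}$. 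Both the numerator and the denominator of the claimed ratio are sums over $(m_1,m_3)$ in the same window of $W_1W_3$ times an $m_2$-sum. It therefore suffices to prove that
\[
\sup_{m_1,m_3\in[m^*-\eps,m^*+\eps]}\frac{\sum_{m_2<1-\delta}\exp\{b_NF_\beta(m_2)-R^{(2)}_N(m_2)+h_N m_2\}}{\sum_{m_2}\exp\{b_NF_\beta(m_2)-R^{(2)}_N(m_2)+h_N m_2\}}\longrightarrow 0 .
\]
The ratio of the total sums is then bounded by this supremum.

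For the pointwise bound, I lower-bound the denominator by its single term $m_2=1$. For this term $\binom{b_N}{b_N}=1$, so its weight is exactly $\exp\{b_N\beta/2-b_N\log 2+h_N\}$, with no Stirling error. In the numerator, each term is at most the corresponding $m_2=1$ term times
\[
\exp\bigl\{b_N(F_\beta(m_2)-F_\beta(1))+O(\log b_N)-h_N(1-m_2)\bigr\}.
\]
Since $F_\beta$ is bounded on $[-1,1]$, we have $F_\beta(m_2)-F_\beta(1)\le K_\beta$ for a constant $K_\beta$ (e.g.\ $K_\beta=\log 2$). Moreover $1-m_2>\delta$ on the numerator's range, and $h_N\ge 2\alpha_N\sqrt{b_Nn_N}(m^*-\eps)\ge \alpha_N\sqrt{b_Nn_N}\,m^*$ because $\eps<m^*/2$. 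Summing over the at most $b_N+1$ values of $m_2$ therefore bounds the ratio by
\[
(b_N+1)\exp\Bigl\{b_N\Bigl(K_\beta+O\bigl(\tfrac{\log b_N}{b_N}\bigr)-\delta m^*\alpha_N\sqrt{n_N/b_N}\Bigr)\Bigr\}.
\]
This bound is uniform in $(m_1,m_3)$.

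The key and only delicate step is checking that the field wins over the entropy/energy cost of block $2$. It does, because $\alpha_N\sqrt{n_N/b_N}\sim \alpha_N\sqrt{N/(2b_N)}\to\infty$ by the hypothesis $c=\infty$. The bracket is thus eventually below $-1$, and $(b_N+1)e^{-b_N}\to0$ since $b_N\to\infty$. The constraint $\eps<\frac{m^*}{2}\delta$ is stronger than what this crude argument needs. If a sharper comparison is wanted, one can instead compare with the term at $m_2=1-\delta/2$ and use continuity of $F_\beta$; but I expect the $m_2=1$ comparison to suffice.

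Finally, the second display follows by applying the global spin-flip $\sigma\mapsto-\sigma$, which maps $(m_1,m_2,m_3)\mapsto(-m_1,-m_2,-m_3)$ and leaves $\mu_{N,\beta,\alpha_N}$ invariant.
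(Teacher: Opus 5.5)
Your proof is correct and follows essentially the same route as the paper: you lower-bound the denominator by the $m_2=1$ slice (where $\binom{b_N}{b_N}=1$), let the $m_1,m_3$-sums cancel, and use $\alpha_N\sqrt{b_Nn_N}\gg b_N$ to beat the $O(b_N)$ Curie--Weiss cost of block~2. The only difference is bookkeeping: you compare at fixed $(m_1,m_3)$ via the mediant inequality, so the same field $h_N(m_1,m_3)$ appears on both sides and you lose only $h_N(1-m_2)\ge 2(m^*-\eps)\delta\,\alpha_N\sqrt{b_Nn_N}$, which works for any $\eps<m^*$; the paper instead bounds the coupling uniformly over the window, and that is why it needs $\eps<\frac{m^*}{2}\delta$ to make $2\eps-\delta(m^*+\eps)<0$.
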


\begin{proof}
	Let $
	I_\eps:=[m^*-\eps,m^*+\eps].$
	As before,
	\[
	\mu_{N,\beta,\alpha_N}(m_1,m_3\in I_\eps)
	\ge
	\mu_{N,\beta,\alpha_N}(m_1,m_3\in I_\eps,\ m_2=1),
	\]
	and therefore
	\begin{align*}
		&\frac{\mu_{N,\beta,\alpha_N}(m_1,m_3\in I_\eps,\ m_2<1-\delta)}
		{\mu_{N,\beta,\alpha_N}(m_1,m_3\in I_\eps)}
		\le
		\frac{\mu_{N,\beta,\alpha_N}(m_1,m_3\in I_\eps,\ m_2<1-\delta)}
		{\mu_{N,\beta,\alpha_N}(m_1,m_3\in I_\eps,\ m_2=1)}.
	\end{align*}
	Again we write $
	n_N:=\frac{N-b_N}{2}.$
	On the set \(\{m_1,m_3\in I_\eps,\ m_2<1-\delta\}\), we have
	$
	m_1+m_3\le 2(m^*+\eps)$ and
	$m_2\le 1-\delta$,
	hence
	$
	m_2(m_1+m_3)\le 2(1-\delta)(m^*+\eps)$.
	Therefore,
	\begin{align}
		&\mu_{N,\beta,\alpha_N}(m_1,m_3\in I_\eps,\ m_2<1-\delta)
		\nonumber\\
		&\le
		\frac{1}{Z_{N,\beta,\alpha_N}}
		\exp\left\{
		2\alpha_N\sqrt{b_Nn_N}(1-\delta)(m^*+\eps)
		\right\}
		\sum_{m_1,m_2,m_3}
		\mathbf 1_{\{m_1,m_3\in I_\eps\}}
		\mathbf 1_{\{m_2<1-\delta\}}
		\nonumber\\
		&\qquad\qquad\times
		\exp\left\{-
		\overline H_{n_N,\beta}(m_1)
		-\overline H_{b_N,\beta}(m_2)
		-\overline H_{n_N,\beta}(m_3)
		\right\}
		\prod_{k=1}^3 \binom{|B_k|}{|B_k|\frac{1+m_k}{2}}.
		\label{eq:num_cinf}
	\end{align}
	
	On the other hand, on the set \(\{m_1,m_3\in I_\eps,\ m_2=1\}\), we have
	$m_1+m_3\ge 2(m^*-\eps)$,
	so
	\begin{align}
		&\mu_{N,\beta,\alpha_N}(m_1,m_3\in I_\eps,\ m_2=1)
		\nonumber\\
		&\ge
		\frac{1}{Z_{N,\beta,\alpha_N}}
		\exp\left\{
		2\alpha_N\sqrt{b_Nn_N}(m^*-\eps)
		\right\}
		\sum_{m_1,m_3}
		\mathbf 1_{\{m_1,m_3\in I_\eps\}}
		\nonumber\\
		&\qquad\qquad\times
		\exp\left\{-
		\overline H_{n_N,\beta}(m_1)
		-\overline H_{b_N,\beta}(1)
		-\overline H_{n_N,\beta}(m_3)
		\right\}
		\prod_{k=1}^3 \binom{|B_k|}{|B_k|\frac{1+m_k}{2}}.
		\label{eq:den_cinf}
	\end{align}
	
	Taking the ratio of \eqref{eq:num_cinf} and \eqref{eq:den_cinf}, the sums over
	\(m_1\) and \(m_3\) cancel, and we obtain
	\begin{align}
		&\frac{\mu_{N,\beta,\alpha_N}(m_1,m_3\in I_\eps,\ m_2<1-\delta)}
		{\mu_{N,\beta,\alpha_N}(m_1,m_3\in I_\eps,\ m_2=1)}
		\nonumber\\
		&\le
		\exp\left\{
		2\alpha_N\sqrt{b_Nn_N}\bigl((1-\delta)(m^*+\eps)-(m^*-\eps)\bigr)
		\right\} \\
        & \qquad \times 
		\sum_{m_2<1-\delta}
		\exp\left\{-
		\overline H_{b_N,\beta}(m_2)+\overline H_{b_N,\beta}(1)
		\right\}
		\binom{b_N}{b_N\frac{1+m_2}{2}}.
		\label{eq:ratio_cinf_1}
	\end{align}
	
	Now let \(\widetilde m_{b_N}\) be the positive maximizer of \(F_\beta\) on the
	discrete set \(\mathcal A_N^{(2)}\). Then for all \(m_2\in\mathcal A_N^{(2)}\),
	$
	F_\beta(m_2)\le F_\beta(\widetilde m_{b_N}),
	$
	and therefore
	\begin{multline*}
	    \sum_{m_2<1-\delta}
	\exp\left\{-
	\overline H_{b_N,\beta}(m_2)+\overline H_{b_N,\beta}(1)
	\right\}
	\binom{b_N}{b_N\frac{1+m_2}{2}} \\
	\le
	(b_N+1)\exp\left\{
	b_N\bigl(F_\beta(\widetilde m_{b_N})-F_\beta(1)\bigr)+O(\log b_N)
	\right\}.
	\end{multline*}
	Since \(\widetilde m_{b_N}\to m^*\), we have
	$F_\beta(\widetilde m_{b_N})-F_\beta(1)
	=F_\beta(m^*)-F_\beta(1)+o(1)$.
	Hence \eqref{eq:ratio_cinf_1} gives
	\begin{align*}
		&\frac{\mu_{N,\beta,\alpha_N}(m_1,m_3\in I_\eps,\ m_2<1-\delta)}
		{\mu_{N,\beta,\alpha_N}(m_1,m_3\in I_\eps,\ m_2=1)}
		\\
		&\le
		\exp\Bigl\{
		2\alpha_N\sqrt{b_Nn_N}\bigl(2\eps-\delta(m^*+\eps)\bigr)
		+
		b_N\bigl(F_\beta(m^*)-F_\beta(1)+o(1)\bigr)
		+O(\log b_N)
		\Bigr\}.
	\end{align*}
	
	By assumption we know, $\frac{2\eps}{m^*}<\delta<1$.
	In particular,
	$2\eps-\delta(m^*+\eps)<0.$
	Moreover, $	\alpha_N\sqrt{\frac{N}{b_N}}\to\infty$ implies
	$\alpha_N\sqrt{Nb_N}\gg b_N,$
	and since \(n_N\sim N/2\), the negative term of order
	\(\alpha_N\sqrt{Nb_N}\) dominates the positive term of order \(b_N\).
	Therefore the right-hand side tends to \(0\), proving the first claim.
	
	The second claim follows by global spin-flip symmetry.
\end{proof}

\begin{lemma}\label{lemma:c<infty}
	If $\lim_{N\to \infty} \alpha_N \sqrt{\frac{N}{b_N}}=c \in [0,\infty)$, then for any $0<\delta <m(c)$, and $0<\eps<\eps_{\delta,c,\beta}$ (for some $\eps_{\delta,c,\beta}>0$ and to be chosen later),
	$$
	\frac{\mu_{N,\beta,\alpha_N} \left( m_1,m_3 \in [m^*-\eps, m^*+\eps], m_2\in [m(c)-\delta, m(c)+\delta]^c\cap[0,1] \right)}{\mu_{N,\beta,\alpha_N} \left( m_1,m_3 \in  [m^*-\eps, m^*+\eps]\right)} \to 0.
	$$
\end{lemma}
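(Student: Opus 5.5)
The plan is to follow the template of Lemma~\ref{lem:c=infty}: freeze $m_1,m_3\in I_\eps:=[m^*-\eps,m^*+\eps]$, bound the interaction term from above and below, cancel the $m_1$- and $m_3$-sums, and reduce everything to a statement about the small block alone. Write $n_N=\frac{N-b_N}{2}$ and $h_N:=\alpha_N\sqrt{n_N/b_N}$. Then $\alpha_N\sqrt{b_Nn_N}\,m_2(m_1+m_3)=b_N h_N m_2(m_1+m_3)$, and $h_N\to c/\sqrt2$. On $\{m_1,m_3\in I_\eps\}$ and $m_2\ge 0$ we have
\[
b_N h_N m_2\cdot 2(m^*-\eps)\le \alpha_N\sqrt{b_Nn_N}\,m_2(m_1+m_3)\le b_N h_N m_2\cdot 2(m^*+\eps).
\]
Since $2h_N m^*\to \sqrt2 c m^*$, the $m_2$-marginal therefore looks like a Curie--Weiss model on $b_N$ spins in the external field $\sqrt2 cm^*$, up to a perturbation of size $O(\eps)+o(1)$ in the field.

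For the numerator I use the upper bound with field $h^+:=2h_N(m^*+\eps)$, restricted to $m_2\in J^c\cap[0,1]$, where $J=[m(c)-\delta,m(c)+\delta]$. For the denominator I use the lower bound with field $h^-:=2h_N(m^*-\eps)$, keeping only $m_2\ge0$. After the $m_1,m_3$ sums cancel, exactly as in \eqref{eq:num_cinf}--\eqref{eq:den_cinf}, the ratio is at most
\[
\frac{\sum_{m_2\in J^c\cap[0,1]}\exp\{b_N(F_\beta(m_2)+h^+m_2)-R_N^{(2)}(m_2)\}}{\sum_{m_2\ge0}\exp\{b_N(F_\beta(m_2)+h^-m_2)-R_N^{(2)}(m_2)\}}.
\]
Because $m_2\le1$, replacing $h^+$ by $h^-$ costs at most a factor $e^{4b_Nh_N\eps}$. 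Replacing $h_N$ by its limit costs $e^{o(b_N)}$. With $G_c(x):=F_\beta(x)+\sqrt2 cm^*x$, the ratio is therefore bounded by
\[
\operatorname{poly}(b_N)\exp\Bigl\{b_N\Bigl(\sup_{x\in J^c\cap[0,1]}G_c(x)-\max_{\mathcal A_N^{(2)}\cap[0,1]}G_c+C\eps m^*+o(1)\Bigr)\Bigr\},
\]
with $C$ depending only on $c$.

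The key analytic input is that on $[0,1]$ the function $G_c$ has a unique maximizer, namely $m(c)$. The critical points satisfy $x=\tanh(\beta x+\sqrt2cm^*)$, and for $c>0$ the largest solution is the global maximum over $x\ge0$. For $c=0$ one checks that $m^*$ beats $0$ since $\beta>1$. By continuity and compactness this gives a gap
\[
\eta_{\delta,c}:=G_c(m(c))-\sup_{x\in[0,1]\setminus J}G_c(x)>0.
\]
The discrete maximum over $\mathcal A_N^{(2)}\cap[0,1]$ converges to $G_c(m(c))$ because the grid has mesh $2/b_N\to0$. Choosing $\eps_{\delta,c,\beta}$ so that $C\eps m^*<\eta_{\delta,c}/2$, the exponent is at most $-b_N\eta_{\delta,c}/2+o(b_N)$. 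Since $b_N\to\infty$, this beats the polynomial prefactor and the ratio tends to $0$.

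The main obstacle is establishing the uniqueness of the maximizer and the resulting gap $\eta_{\delta,c}$ on $[0,1]$. At $c=0$ one must use the restriction to $[0,1]$ to exclude $-m^*$. In addition, $\delta<m(c)$ ensures that $0\notin J$, and one must compare $G_c(0)$ with $G_c(m(c))$. I would handle this via concavity of $s$ together with the fixed-point characterization: $G_c'(x)=\beta x+\sqrt2cm^*-\operatorname{artanh}(x)$, which is positive on $(0,m(c))$ and negative on $(m(c),1)$, by convexity of $\operatorname{artanh}$ on $[0,1)$.
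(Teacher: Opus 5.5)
Your proof is correct, and its skeleton matches the paper's. On $\{m_1,m_3\in[m^*-\eps,m^*+\eps]\}$ you sandwich the interaction $\alpha_N\sqrt{b_Nn_N}\,m_2(m_1+m_3)$ between $b_N\sqrt2cm^*m_2\mp b_N\bigl(O(c\eps)+o(1)\bigr)$, cancel the $m_1$- and $m_3$-sums, and are left with a Curie--Weiss model on $b_N$ spins in the field $\sqrt2cm^*$.

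You differ from the paper in the final step. The paper keeps the full $m_2$-sum in the denominator, so the ratio becomes $e^{2b_N(\sqrt2c\eps+o(1))}\,\widetilde\mu_{b_N,\beta,\sqrt2cm^*}\bigl(m_2\in[m(c)-\delta,m(c)+\delta]^c\cap[0,1]\bigr)$. It then imports exponential concentration of the Curie--Weiss model with external field from the literature, and adds a symmetry/conditioning remark at $c=0$ to dispose of the competing well at $-m^*$.

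You instead restrict the denominator to $m_2\ge0$ and prove the concentration directly. Concavity of $x\mapsto\beta x+\sqrt2cm^*-\operatorname{artanh}(x)$ on $[0,1)$ shows that $G_c$ is strictly increasing on $[0,m(c)]$ and strictly decreasing on $[m(c),1]$. This gives the explicit gap $\eta_{\delta,c}>0$, and hence an explicit admissible $\eps_{\delta,c,\beta}$ of order $\eta_{\delta,c}/c$ (any $\eps$ when $c=0$).

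What each route buys: your version is self-contained and treats $c=0$ and $c>0$ with a single argument, since you never need the behaviour of $G_c$ on $[-1,0)$. It also makes the exponential rate explicit. The paper's version is shorter because it outsources exactly this one-block Laplace estimate.
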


\begin{proof}
Again let $
	I_\eps:=[m^*-\eps,m^*+\eps].$
    Fix $c \in [0,\infty)$ and assume that $\alpha_N\sqrt{\frac{N}{b_N}} \longrightarrow c$. On the event $\{ m_1,m_3 \in I_\eps\}$, for $m_2 \in [0,1]$ the inter-block interaction term can be bounded from above by:
\begin{align*}
     \le &\alpha_N \sqrt{b_N\frac{N-b_N}{2}}(m_1+m_3)m_2 
     \le  \alpha_N \sqrt{b_N\frac{N-b_N}{2}} 2(m^*+\eps)m_2 \\ \le& b_N\sqrt{2}c (m^*+\eps) m_2 + b_N \left|\sqrt{2}c- 2\alpha_N \sqrt{\frac{N-b_N}{2b_N}} \right|(m^*+\eps) \\
     =&  b_N\sqrt{2}c m^* m_2 + b_N \left( \sqrt{2}c\eps +o(1)\right) .
\end{align*}
Similarly, we get the lower bound
\begin{multline*}
      \alpha_N \sqrt{b_N\frac{N-b_N}{2}}(m_1+m_3)m_2 \ge  \alpha_N \sqrt{b_N\frac{N-b_N}{2}}2(m^*-\eps)m_2  \\
      \ge  b_N\sqrt{2}c m^* m_2 -b_N \left( \sqrt{2}c\eps +o(1)\right) 
\end{multline*}
 Consider the conditional distribution of $m_2$ under $\mu_{N,\beta,\alpha_N}\left( \, \cdot \, | \, m_1,m_3 \in I_\eps \right)$. By the above bounds, for the probability that $m_2$ is bounded away from $m(c)$ we get
\begin{align}
    &\mu_N( m_2\in [m(c)-\delta, m(c) + \delta]^c\cap [0,1] \, | \, m_1,m_3 \in I_\eps) \nonumber  \\
    =& \frac{\mu_N( m_2 \in [m(c)-\delta, m(c) + \delta]^c\cap [0,1] \, , \,  m_1,m_3 \in I_\eps)}{\mu_N( m_1,m_3 \in I_\eps)}  \nonumber \\
    \le& \exp\left\{ 2 b_N(\sqrt{2}c \eps + o(1)) \right\}\frac{\sum_{m_2 \in [m(c)-\delta, m(c) + \delta]^c\cap [0,1] } \exp\left\{ b_N \left(\frac{\beta}{2} m_2^2 + \sqrt{2}cm^* m_2 \right)\right\} \binom{b_N}{b_N\frac{1+m_2}{2}} }{ \sum_{m_2} \exp\left\{ b_N \left(\frac{\beta }{2}m_2^2 + \sqrt{2}cm^* m_2 \right)\right\}\binom{b_N}{b_N\frac{1+m_2}{2}}} \nonumber \\
    =&\exp\left\{  2 b_N(\sqrt{2}c \eps + o(1)) \right\} \widetilde{\mu}_{b_N,\beta, \sqrt{2}cm^*}\left(m_2 \in [m(c)-\delta, m(c) + \delta]^c\cap [0,1]\right) , \label{eq:magneticField}
\end{align}
where we denote by $\widetilde{\mu}_{b_N,\beta, \sqrt{2}cm^*}$ the Gibbs measure of the Curie-Weiss model of size $b_N$ with external magnetic field $ \sqrt{2}cm^*$. For the Curie--Weiss model with external field $ \sqrt{2}cm^*\ge0$, it is standard that the
	magnetization concentrates exponentially fast around $m(c)$\cite[Chapter 2, p.68, Exercise 2.5]{Velenik_book}. Note that for $c=0$, we have $m(0)=m^*$. Then 
    \begin{align*}
        &\widetilde{\mu}_{b_N,\beta, 0}\left(m_2 \in [m^*-\delta, m^* + \delta]^c\cap [0,1]\right)\\
        =& \widetilde{\mu}_{b_N,\beta, 0}\left(m_2 \in [m^*-\delta, m^* + \delta]^c \, | \, m_2\in  [0,1]\right) \widetilde{\mu}_{b_N,\beta, 0}\left(m_2 \in [0,1]\right) \\
        =& \left(\frac{1}{2} -o(1) \right) \widetilde{\mu}_{b_N,\beta, 0}\left(m_2 \in [m^*-\delta, m^* + \delta]^c \, | \, m_2\in  [0,1]\right)
    \end{align*}
    and conditioned on the set $[0,1]$, the exponential concentration remains true.
	 Hence, for every $c\in [0,\infty)$, there exists a constant $K(\beta,c,\delta)>0$ such that 
\begin{equation}\label{eq:magneticField2}
    \widetilde{\mu}_{b_N,\beta, \sqrt{2}cm^*}\left(m_2 \in [m(c)-\delta, m(c) + \delta]^c\cap [0,1]\right) \le \exp \left\{- K(\beta,c, \delta)b_N \right\}.
\end{equation}
Plugging \eqref{eq:magneticField2} into \eqref{eq:magneticField} yields
\begin{align*}
    &\mu_N( m_2\in [m(c)-\delta, m(c) + \delta]^c\cap [0,1] \, | \, m_1,m_3 \in [m^*-\eps, m^*+\eps]) \nonumber  \\
    \le& \exp\left\{ -b_N\left( K(\beta,c, \delta)-  2(\sqrt{2}c \eps + o(1)) \right) \right\} \longrightarrow 0,
\end{align*}
where the convergence follows for $\eps_{\delta,c,\beta} < \frac{K(\beta,c, \delta)}{2\sqrt{2}c}$ if $c>0$, and any $\eps_{\delta,0,\beta}< \min\{m^*,1-m^*\}$ if $c=0$ respectively, since then for all $\eps < \eps_{\delta,c,\beta}$ and $N$ large enough, $K(\beta,c, \delta)-  2(\sqrt{2}c \eps + o(1)) >0$.
\end{proof}

\begin{lemma}\label{lem:samesign}
	If $\lim_{N\to\infty}\alpha_N\sqrt{b_NN}=\infty$ and
	$\lim_{N\to\infty}\alpha_N\sqrt{\frac{N}{b_N}}=c\in[0,\infty]$,
	then for any $0<\delta<m(c)$ and any
	$0<\eps<\min\{m^*,1-m^*\}$,
	\[
	\frac{\mu_{N,\beta,\alpha_N}\Big(m_1,m_3\in[m^*-\eps,m^*+\eps],\, m_2\le 0\Big)}
	{\mu_{N,\beta,\alpha_N}\Big(m_1,m_3\in[m^*-\eps,m^*+\eps],\, m_2\in[m(c)-\delta,m(c)+\delta]\Big)}
	\longrightarrow 0.
	\]
	Equivalently,
	\[
	\frac{\mu_{N,\beta,\alpha_N}\Big(m_1,m_3\in[m^*-\eps,m^*+\eps],\, m_2\in[m(c)-\delta,m(c)+\delta]\Big)}
	{\mu_{N,\beta,\alpha_N}\Big(m_1,m_3\in[m^*-\eps,m^*+\eps],\, m_2\le 0\Big)}
	\longrightarrow \infty.
	\]
\end{lemma}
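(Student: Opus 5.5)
The plan is to compare the two Gibbs masses after conditioning on the values of $(m_1,m_3)$, in the same way as in Lemmas~\ref{lemma:m1m3Regime1and2}--\ref{lemma:c<infty}. I treat the three cases $c=0$, $c\in(0,\infty)$, $c=\infty$ separately, and the case $c\in(0,\infty)$ is the one I expect to be delicate.

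Write $n_N=\frac{N-b_N}{2}$, $I_\eps=[m^*-\eps,m^*+\eps]$ and $J_\delta=[m(c)-\delta,m(c)+\delta]$. On $\{m_1,m_3\in I_\eps\}$ we have $m_1+m_3\ge 2(m^*-\eps)>0$. Hence for $m_2\le0$ the interaction term satisfies
\[
\alpha_N\sqrt{b_Nn_N}\,m_2(m_1+m_3)\le 0 .
\]
On the denominator event, $m_2\ge m(c)-\delta>0$, so
\[
\alpha_N\sqrt{b_Nn_N}\,m_2(m_1+m_3)\ge 2\alpha_N\sqrt{b_Nn_N}\,(m^*-\eps)(m(c)-\delta).
\]
Bounding the interaction by these constants in the numerator and the denominator, the Curie--Weiss weights factorize over the three blocks. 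The $m_1$- and $m_3$-sums are identical and cancel, exactly as in the proof of Lemma~\ref{lemma:m1m3Regime1and2}. This gives
\[
\text{ratio}\le \frac{\exp\{-2\alpha_N\sqrt{b_Nn_N}(m^*-\eps)(m(c)-\delta)\}}{\mu_{b_N,\beta}(m_2\in J_\delta)} .
\]

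\emph{Case $c=0$.} Here $m(0)=m^*$, so $\mu_{b_N,\beta}(m_2\in J_\delta)\to\frac12$ by the low-temperature Curie--Weiss law. The exponent tends to $-\infty$ because $\alpha_N\sqrt{b_NN}\to\infty$. This settles the case for every admissible $\eps$.

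\emph{Case $c=\infty$.} Here $m(\infty)=1$, and $\mu_{b_N,\beta}(m_2\in J_\delta)$ is only of order $e^{-b_N(F_\beta(m^*)-F_\beta(1-\delta))+O(\log b_N)}$. The gain is of order $\alpha_N\sqrt{Nb_N}$, which is $\gg b_N$ when $c=\infty$. As in Lemma~\ref{lem:c=infty}, the gain dominates and the ratio tends to $0$.

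\emph{Case $c\in(0,\infty)$.} Now both the gain and the Curie--Weiss cost of pushing $m_2$ to $m(c)>m^*$ are of order $b_N$, so the crude bound above is insufficient. Instead I fix $(m_1,m_3)$ with $s=m_1+m_3\in[2(m^*-\eps),2(m^*+\eps)]$. The conditional law of $m_2$ is then exactly a Curie--Weiss law on $b_N$ spins with field
\[
h_N(s)=\alpha_N\sqrt{n_N/b_N}\,s\ \ge\ \sqrt2\,c\,(m^*-\eps)+o(1)>0 .
\]
The argument then proceeds in two steps.
\begin{enumerate}
\item I use the reflection $m_2\mapsto-m_2$. The binomial and quadratic factors are invariant under it, so the weight of $-x$ equals the weight of $x$ times $e^{-2h_Nb_Nx}$. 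Splitting into $0\le x\le\eta$ and $x>\eta$, the part $x\le\eta$ is exponentially negligible against the mass near the positive maximizer of the tilted free energy. For the part $x>\eta$ the factor $e^{-2h_Nb_N\eta}$ gives decay. Together this yields $\text{mass}(m_2\le0)\le e^{-\kappa b_N}\,\text{mass}(m_2>0)$, uniformly in $s$.
\item I then compare $\text{mass}(m_2>0)$ with $\text{mass}(m_2\in J_\delta)$ via the estimate of Lemma~\ref{lemma:c<infty}: the positive half concentrates near $m(c)$ up to an error $e^{-b_N(K-2\sqrt2c\eps)}$.
\end{enumerate}
Summing over $(m_1,m_3)$ then gives a ratio of at most $e^{-\kappa b_N}(1+o(1))\to0$.

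The main obstacle is the second step. Its uniformity requires $\eps$ small compared with $K(\beta,c,\delta)$, whereas the statement allows any $\eps<\min\{m^*,1-m^*\}$. My plan for this is to enlarge the window slightly: replace $\delta$ by $\delta'=\delta/2$ and use continuity of $h\mapsto m(h)$, so that the tilted maximizers for all $s$ in the range lie inside $J_\delta$. If that fails for large $\eps$, I would state the lemma for $\eps<\eps_{\delta,c,\beta}$ as in Lemma~\ref{lemma:c<infty}. That restriction suffices for the subsequent proof of Theorems~\ref{thm:Blockspin} and \ref{thm:PhaseTransition}, since only small $\eps$ are needed there.
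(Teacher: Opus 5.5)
\textbf{The gap: case $c\in(0,\infty)$ with $\eps$ not small.} Your Step~1 (the reflection $m_2\mapsto -m_2$) is fine for every $\eps<m^*$, because $h_N(s)\ge\sqrt2c(m^*-\eps)+o(1)$ stays bounded away from $0$. Step~2, however, rests on Lemma~\ref{lemma:c<infty}, which is only available for $\eps<\eps_{\delta,c,\beta}$. So you prove the claim only for small $\eps$, whereas the statement asserts it for every $\eps<\min\{m^*,1-m^*\}$.

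Your proposed repair via continuity of $h\mapsto m(h)$ cannot work. For $s=m_1+m_3$ near $2(m^*-\eps)$ with $\eps$ comparable to $m^*$, the conditional maximizer $m(h_N(s))$ lies well below $m(c)-\delta$. Hence no comparison with $J_\delta$ that is uniform in $s$ exists.

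The missing idea is that the $(m_1,m_3)$-weights concentrate at scale $n_N\gg b_N$, so a large $\eps$ reduces to a small $\eps'$:
\begin{enumerate}
\item Split the numerator according to whether $(m_1,m_3)\in I_{\eps'}^2$.
\item The inside part is the small-$\eps'$ numerator, and the denominator only grows with $\eps$.
\item For the outside part, bound the interaction by $0$ (since $m_2\le 0<m_1+m_3$). It is then at most $\mathrm{poly}(N)\,e^{-n_N(c_\beta\eps'^2+o(1))+O(b_N)}$ times the denominator. This uses the estimate from the proof of Lemma~\ref{lemma:m1}, and the fact that the $m_2$-sums on the two sides differ by at most a factor $e^{O(b_N)}$.
\item This bound tends to $0$ because $b_N=o(n_N)$.
\end{enumerate}
Your fallback of restricting to $\eps<\eps_{\delta,c,\beta}$ would, as you say, suffice for the proof of Theorems~\ref{thm:Blockspin} and~\ref{thm:PhaseTransition}.

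\textbf{Comparison with the paper.} For $c=0$ your argument is the paper's. For $c\neq 0$ the paper uses the same crude bound and then asserts $\mu_{b_N,\beta}(m_2\in[m(c)-\delta,m(c)+\delta])\to\frac12$. After the interaction has been bounded by a constant, however, the remaining $m_2$-sums are those of the field-free Curie--Weiss measure, so this is false whenever $\delta<m(c)-m^*$. Your distrust is therefore justified. For $c=\infty$, your argument (a gain of order $\alpha_N\sqrt{Nb_N}\gg b_N$ against a cost of $e^{-O(b_N)}$) is the correct repair.

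For $c\in(0,\infty)$, though, the crude bound is not actually insufficient once that probability is estimated correctly, and this gives a shorter route than your reflection argument. Let $x:=m(c)-\delta$.
\begin{itemize}
\item If $x\le m^*$, then $m^*\in J_\delta$ and the $c=0$ argument applies; the gain is of order $b_N$ because $x>0$.
\item If $x>m^*$, the crude bound has exponent $-b_N\bigl[\sqrt2c(m^*-\eps)x+F_\beta(x)-F_\beta(m^*)\bigr]+o(b_N)$. Set $G(y):=F_\beta(y)+\sqrt2cm^*y$. Its derivative $G'(y)=\beta y+\sqrt2cm^*-\operatorname{artanh}(y)$ is concave on $[0,1)$, with $G'(m^*)=\sqrt2cm^*>0$ and $G'(m(c))=0$. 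Hence $G$ is nondecreasing on $[m^*,m(c)]$, and the bracket is at least $\sqrt2c\bigl((m^*)^2-\eps x\bigr)>0$ for $\eps<(m^*)^2$.
\end{itemize}
This avoids the unspecified constant $K(\beta,c,\delta)$ of Lemma~\ref{lemma:c<infty}. It covers the range $\eps<\frac{(m^*)^2}{1+m^*}$ used later, and combined with the reduction above it gives the full statement.
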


\begin{proof}
	Let
	\[
	E_\eps:=\{m_1,m_3\in[m^*-\eps,m^*+\eps]\}.
	\]
	For $m_1,m_3\in[m^*-\eps,m^*+\eps]$ and $m_2\ge 0$ we have
	\[
	\alpha_N\sqrt{b_N\frac{N-b_N}{2}}(m_1+m_3)m_2
	\ge
	2\alpha_N\sqrt{b_N\frac{N-b_N}{2}}(m^*-\eps)m_2.
	\]
	Hence, for $m_2\in[m(c)-\delta,m(c)+\delta]$,
	\[
	\alpha_N\sqrt{b_N\frac{N-b_N}{2}}(m_1+m_3)m_2
	\ge
	2\alpha_N\sqrt{b_N\frac{N-b_N}{2}}(m^*-\eps)(m(c)-\delta).
	\]
	On the other hand, for $m_2\le 0$, the interaction term is at most $0$.
	
	Therefore,
	\begin{align*}
		&\frac{\mu_{N,\beta,\alpha_N}\Big(E_\eps,\, m_2\in[m(c)-\delta,m(c)+\delta]\Big)}
		{\mu_{N,\beta,\alpha_N}\Big(E_\eps,\, m_2\le 0\Big)}\\
		&\qquad\ge
		\exp\left\{
		2\alpha_N\sqrt{b_N\frac{N-b_N}{2}}(m^*-\eps)(m(c)-\delta)
		\right\}
		\frac{\mu_{b_N,\beta}\big(m_2\in[m(c)-\delta,m(c)+\delta]\big)}
		{\mu_{b_N,\beta}(m_2\le 0)}.
	\end{align*}
	By the definition of $m(c)$ and the convergence result for the Curie--Weiss
	model with the corresponding effective external field, we have
	\[
	\mu_{b_N,\beta}\big(m_2\in[m(c)-\delta,m(c)+\delta]\big)\to \frac12,
	\]
	while $\mu_{b_N,\beta}(m_2\le 0)\le \frac12+o(1)$. Thus the second factor
	is bounded away from $0$. Since
	\[
	\alpha_N\sqrt{b_NN}\to\infty,
	\]
	and $(m^*-\eps)(m(c)-\delta)>0$, the exponential term diverges to $\infty$.
	Hence
	\[
	\frac{\mu_{N,\beta,\alpha_N}\Big(E_\eps,\, m_2\in[m(c)-\delta,m(c)+\delta]\Big)}
	{\mu_{N,\beta,\alpha_N}\Big(E_\eps,\, m_2\le 0\Big)}
	\longrightarrow\infty,
	\]
	which is equivalent to the claim.
\end{proof}

In particular, Lemma \ref{lemma:c<infty} and \ref{lem:samesign} together imply that in the case $\lim_{N\to \infty} \alpha_N \sqrt{b_N N} = \infty$ and $\lim_{N\to \infty}\alpha_N\sqrt{\frac{N}{b_N}}=0 $, the block magnetization vector $(m_1,m_2,m_3)$ converges in distribution (with respect to the Gibbs measure) to $\pm (m^*,m^*,m^*)$. Here the first assumption implies that all magnetizations have the same sign and the second one determines the value of the second block (and $m^*$ corresponds to no external magnetic field). For the remainder of this section, we will keep the second assumption but drop the first one. First, we will show, that $m_2$ will still converge to $\pm m^*$:

\begin{lemma}\label{lemma:m2regime2}
    If $\lim_{N \to \infty} \alpha_N\sqrt{\frac{N}{b_N}}=0$, then for all $0< \eps,\delta < \min\{m^*, 1-m^*\}$,
    \begin{multline*}
         \mu_{N,\beta,\alpha_N}\left( (m_1,m_2,m_3) \notin A_\eps\times A_\delta \times A_\eps \right) \\
         \le 4\exp\left\{-n_N\left(c_\beta \eps^2 + o(1) \right) \right\} + \exp\left\{-b_N\left(c_\beta \delta^2 + o(1) \right) \right\} \longrightarrow 0,  
    \end{multline*}
    as $N\to \infty$. In particular it follows that, if we choose sequences $(\eps_N)_N\downarrow0$ and $(\delta_N)_N \downarrow 0$ that satisfy $\delta_N^{-1} = o(\sqrt{b_N})$ and $\eps_N^{-1} = o( \sqrt{N})$, then
    $$
    \mu_{N,\beta,\alpha_N}\left( (m_1,m_2,m_3) \notin A_{\eps_N}\times A_{\delta_N} \times A_{\eps_N} \right) \longrightarrow 0.
    $$
    as $N \to \infty$.
\end{lemma}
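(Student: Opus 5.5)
The bound splits by a union bound into the events $\{m_1\notin A_\eps\}$, $\{m_3\notin A_\eps\}$ and $\{m_2\notin A_\delta\}$. The first two are handled exactly as in Lemma~\ref{lemma:m1}. That lemma gives $\exp\{-n_N(c_\beta\eps^2+o(1))\}$ for each, and it holds without any assumption on $c$. The polynomial prefactors and the term $2\alpha_N\sqrt{b_Nn_N}=o(n_N)$ are absorbed into the $o(1)$, and the constant $4$ leaves room for this absorption. The new part is the $m_2$-event. Here the whole point is that the prefactors must be $o(b_N)$ rather than merely $o(n_N)$, and the hypothesis $\alpha_N\sqrt{N/b_N}\to0$ is exactly what makes this possible.

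For the $m_2$-event we repeat the proof of Lemma~\ref{lemma:m1} with the roles of the blocks exchanged. In the Stirling representation of $Z_{N,\beta,\alpha_N}$ we take the lower bound given by the single summand $(\widetilde m_N,\widetilde m_{b_N},\widetilde m_N)$, choosing all maximizers with the same sign so that the interaction term is nonnegative. In the numerator we bound $m_2(m_1+m_3)\le 2$ as before. This gives
\begin{multline*}
\mu_{N,\beta,\alpha_N}(m_2\notin A_\delta)\le (n_N+1)^2(b_N+1)\exp\Bigl\{2\alpha_N\sqrt{b_Nn_N}+O(\log N)\Bigr\}\\
\times\sum_{m_2\notin A_\delta}\exp\bigl\{b_N\bigl(F_\beta(m_2)-F_\beta(\widetilde m_{b_N})\bigr)\bigr\}.
\end{multline*}
Here the $m_1$- and $m_3$-contributions are bounded by $1$ each, since $\widetilde m_N$ maximizes $F_\beta$ on the grid. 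The Taylor bound $\sup_{x\notin A_\delta}F_\beta(x)\le F_\beta(m^*)-c_\beta\delta^2$ then gives $\exp\{-b_N(c_\beta\delta^2+o(1))\}$ for the sum.

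The main obstacle is that the prefactor must be $e^{o(b_N)}$. The term $2\alpha_N\sqrt{b_Nn_N}=b_N\cdot\sqrt2\,\alpha_N\sqrt{N/b_N}\,(1+o(1))$ is $o(b_N)$ precisely by the assumption $\alpha_N\sqrt{N/b_N}\to0$. The polynomial factor $(n_N+1)^2$ is more delicate, because $\log N$ need not be $o(b_N)$ if $b_N$ grows very slowly. To handle it, I would not sum crudely over $m_1,m_3$. Instead I would factor the decoupled part: bound the interaction by $2\alpha_N\sqrt{b_Nn_N}$ in the numerator and by $-2\alpha_N\sqrt{b_Nn_N}$ in $Z$. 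Then the full Curie--Weiss partition functions of blocks $1$ and $3$ cancel exactly, leaving
\[
\mu_{N,\beta,\alpha_N}(m_2\notin A_\delta)\le e^{4\alpha_N\sqrt{b_Nn_N}}\,\mu_{b_N,\beta}(m_2\notin A_\delta),
\]
and the Curie--Weiss large deviation estimate on the block of size $b_N$ gives $\exp\{-b_N(c_\beta\delta^2+o(1))\}$. I would use this comparison argument in preference to the crude sum. The same comparison applies to $m_1$, now with block size $n_N$.

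For the final statement, choose $\eps_N,\delta_N\downarrow0$ with $\delta_N^2b_N\to\infty$ and $\eps_N^2N\to\infty$. The exponents then diverge, provided the $o(1)$ error terms are $o(\delta_N^2)$ and $o(\eps_N^2)$, respectively. For $m_2$ this requires $\alpha_N\sqrt{N/b_N}=o(\delta_N^2)$ together with control of the $O(\log b_N)/b_N$ corrections. The cleanest way to secure this is the moderate deviation estimate for the Curie--Weiss model used in Proposition~\ref{prop:atypical_mu}, in the form $\mu_{n,\beta}(m\notin A_{\eta})\le Ce^{-cn\eta^2}$ uniformly for $\eta\ge\eta_n$ with $\eta_n^2n\to\infty$. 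The sequences $\delta_N$ and $\eps_N$ can then always be chosen to decay slowly enough that the interaction error $4\alpha_N\sqrt{b_Nn_N}$ is $o(b_N\delta_N^2)$ and $o(n_N\eps_N^2)$ as well.
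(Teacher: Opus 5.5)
Your proof is correct and is essentially the paper's. You bound the interaction by $\pm2\alpha_N\sqrt{b_Nn_N}$ so that the block-1 and block-3 weights cancel, which is also how the paper avoids the $(n_N+1)^2$ prefactor you flag. This leaves $e^{4\alpha_N\sqrt{b_Nn_N}}\mu_{b_N,\beta}(m_2\notin A_\delta)$ with $4\alpha_N\sqrt{b_Nn_N}=o(b_N)$ by hypothesis. The only difference is that the paper runs this comparison conditionally on $\{m_1,m_3\in A_\eps\}$, which is where its constant $4$ comes from; your unconditional comparison is slightly cleaner.

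Restricting the last part to sufficiently slowly decaying $\eps_N,\delta_N$ is justified, not a weakness. The paper simply asserts the ``in particular'' for every sequence with $\delta_N^{-1}=o(\sqrt{b_N})$. But the $o(1)$ in the exponent contains a term of order $\alpha_N\sqrt{N/b_N}$, which need not be $o(\delta_N^2)$. In fact, if $\alpha_N\sqrt N\to\infty$, then $m_2$ concentrates at distance of order $\alpha_N\sqrt{N/b_N}\gg b_N^{-1/2}$ from $\pm m^*$, so any $\delta_N$ between these two scales is a counterexample. The later proof of the theorems only uses the existence of suitable sequences, which is exactly what you establish.
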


\begin{proof}
Again, we will use a similar strategy as in the proof of Lemma \ref{lemma:m1}. More precisely, we bound the interaction term between the blocks, then the sums over $m_1$ and $m_3$ cancel in the numerator and denominator and we get
    \begin{align*}
        &\mu_{N,\beta,\alpha_N}\left( m_2 \notin A_\delta \, | \, m_1,m_3 \in A_\eps \right)
         =\frac{ \mu_{N,\beta,\alpha_N}\left( m_2 \notin A_\delta \, , \, m_1,m_3 \in A_\eps \right)  }{ \mu_{N,\beta,\alpha_N}\left(  m_1,m_3 \in A_\eps \right)  } \\
        \le & \exp\left\{4 \alpha_N \sqrt{b_N \frac{N-b_N}{2}} \right\}\sum_{m_2} \exp\left\{ b_N\left( F_\beta (m_2) - F_{\beta}(\widetilde{m}_{b_N}\right) + R_N^{(2)}(\widetilde{m}_{b_N} ) \right\} \\
        \le& \exp\left\{ - b_N (c_\beta\delta^2 + o(1) )\right\},
    \end{align*}
    where the last step follows since $R_N^{(2)}(\widetilde{m}_{b_N} )  = o(b_N)$ and
    $
    \alpha_N \sqrt{\frac{N-b_N}{2b_N}} 
     \longrightarrow 0 $ if and only if
     $\alpha_N \sqrt{b_N \frac{N-b_N}{2}} = o(b_N).
    $
    Then, 
    \begin{align*}
        &\mu_{N,\beta,\alpha_N} \left( (m_1,m_2,m_3) \notin A_\eps\times A_\delta \times A_\eps \right)  \le 
        2\mu_{N,\beta,\alpha_N}( m_1 \notin A_\eps ) + \mu_{N,\beta,\alpha_N}( m_2 \notin A_\delta) \\
        \le& 2\mu_{N,\beta,\alpha_N}(m_1\notin A_\eps)
    		+\mu_{N,\beta,\alpha_N}(m_1\notin A_\eps \text{ or } m_3\notin A_\eps)  + \mu_{N,\beta,\alpha_N}(m_2\notin A_\delta,\, m_1,m_3\in A_\eps) \\
    		\le& 4\mu_{N,\beta,\alpha_N}(m_1\notin A_\eps)
    		+ \mu_{N,\beta,\alpha_N}(m_2\notin A_\delta \mid m_1,m_3\in A_\eps) \\
            \le & 4\exp\left\{-n_N\left(c_\beta \eps^2 + o(1) \right) \right\} + \exp\left\{-b_N\left(c_\beta \delta^2 + o(1) \right) \right\}
         \longrightarrow 0,
    \end{align*}
    where the convergence follows by the above computation together with Lemma \ref{lemma:m1}
\end{proof}

\begin{proof}[Proof of Theorems \ref{thm:Blockspin} and \ref{thm:PhaseTransition}]

	We will start with the case $\lim_{N\to \infty} \alpha_N\sqrt{b_N N} = \infty$: recall the definition of $\eps_{\delta,c, \beta}$ from Lemma~\ref{lemma:c<infty} for $c \in [0,\infty)$. Let us define $\eps_{\delta,\infty, \beta} :=\frac{m^*}{2}\delta$. Fix $c \in [0,\infty]$ and assume that $\lim_{N\to \infty} \alpha_N\sqrt{\frac{N}{b_N}} = c$. Then, for any $0<\delta < m(c)$ and any $\eps< \min\{\frac{(m^*)^2}{1+m^*}, 1-m^*, \eps_{\delta, c ,\beta}, \delta\}$, we have 
	\begin{flalign*}
		&\mu_{N,\beta,\alpha_N}\left( m\notin B_{\eps}((m^*,m(c),m^*))\cup B_\eps((-m^*,-m(c),-m^*)) \right)  \\
		\le& \mu_{N,\beta,\alpha_N}(m_1 \notin A_\eps) +  \mu_{N,\beta,\alpha_N}(m_3 \notin A_\eps) + \mu_{N,\beta,\alpha_N}(m_1,m_3\in A_\eps, \, m_1m_3<0) \\
		& \quad + \mu_{N,\beta,\alpha_N}(m_1,m_3 \in [m^* - \eps, m^* + \eps] ,  m_2 \in [m(c)-\delta , m(c) + \delta]^c\cap[0,1] ) \\
		& \quad \, \, \, + \mu_{N,\beta,\alpha_N}(m_1,m_3, \in [-m^* - \eps, -m^* + \eps] ,  m_2 \in [-m(c) - \delta , -m(c) + \delta]^c\cap [-1,0]) \\
        & \qquad + \mu_{N,\beta,\alpha_N}(m_1,m_3, \in [m^* - \eps, m^* + \eps] ,  m_2 \le 0) \\
        & \qquad \, \, \, + \mu_{N,\beta,\alpha_N}(m_1,m_3, \in [-m^* - \eps, -m^* + \eps] ,  m_2 \ge 0) \\ & \qquad \qquad \longrightarrow 0
	\end{flalign*}
	where the convergence follows by Lemma \ref{lemma:m1}, Lemma \ref{lemma:m1m3Regime1and2}, Lemma \ref{lem:c=infty} if $c=\infty$ and Lemma \ref{lemma:c<infty} if $c \in [0,\infty)$ respectively, and by Lemma \ref{lem:samesign}.
	In other words, the magnetization vector converges to either $(m^*,m(c),m^*)$ or $(-m^*,-m(c),-m^*)$ in probability with respect to the Gibbs measure. Since the Gibbs measure is invariant under a global spin flip, we obtain that the convergence to either of the two limit points happens with probability $\frac{1}{2}$. This proves the statements in Theorem~\ref{thm:Blockspin}\eqref{thm:Blockspin1},\eqref{thm:Blockspin2} and Theorem~\ref{thm:PhaseTransition} \eqref{thm:Phasetrans1}.
	
	\medskip
	
	Now let us show the case $\lim_{N\to \infty} \alpha_N \sqrt{\frac{N}{b_N}} = 0$: fix $C \in [0,\infty]$, then by Lemma \ref{lemma:m2regime2}, we know that there exist two sequences $(\eps_N), (\delta_N)\downarrow 0$ such that 
$$
\mu_{N,\beta,\alpha_N}\left((m_1,m_2,m_3) \in A_{\eps_N} \times A_{\delta_N} \times A_{\eps_N} \right) \longrightarrow 1.
$$ Hence, it only remains to compute the weights $a(\chi_1,\chi_2, \chi_3,C)$, $\chi_1,\chi_2, \chi_3 \in \{+1,-1\}$ of the possible limiting points as a function of $C$. By a standard diagonalization procedure
Note that by exchangeability of blocks 1 and 3, and by the invariance of the Gibbs measure under a global spin flips, we have to investigate representatives the following three cases
\begin{itemize}
    \item blocks 1 and 3 have the same sign, block 2 has the opposite sign (2 possibilities)
    \item blocks 1 and 3 have opposite sign (4 possibilities)
    \item all three blocks have the same sign (2 possibilities).
\end{itemize}
More precisely, we have
\begin{align}
    &\frac{ \mu_{N,\beta,\alpha_N}\left( m_1,m_3 \in [m^*- \eps_N, m^*+\eps_N], m_2 \in [-m^*-\delta_N, -m^*+\delta_N] \right) }{\mu_{N,\beta,\alpha_N}\left( m_1,m_3 \in [m^*- \eps_N, m^*+\eps_N], m_2 \in [m^*-\delta_N, m^*+\delta_N] \right) } \nonumber \\
    \le& \exp \left\{ \alpha_N \sqrt{b_N \frac{N-b_N}{2}}\left( (-m^*+\delta_N)2(m^*-\eps_N) - (m^*-\delta_N)2(m^*-\eps_N) \right) \right\} \nonumber \\
    =& \exp \left\{ \alpha_N \sqrt{b_N \frac{N-b_N}{2}} 4\left( -(m^*)^2 +m^*(\delta_N+\eps_N) -\eps_N\delta_N \right) \right\} \nonumber \\
    &\longrightarrow  \begin{cases}
          \exp\left\{-2\sqrt{2}C(m^*)^2 \right\} & \quad \text{ if }C \in (0,\infty)\\
          0 & \quad \text{ if }C =\infty \\
          1 & \quad \text{ if }C =0
    \end{cases}, \label{eq:case1upper}
\end{align}
where we used that $\alpha_N \sqrt{b_N \frac{N-b_N}{2}} \longrightarrow \frac{C}{\sqrt{2}}$ as $N \to \infty$. Similarly, one gets the lower bound
\begin{align}
    &\frac{ \mu_{N,\beta,\alpha_N}\left( m_1,m_3 \in [m^*- \eps_N, m^*+\eps_N], m_2 \in [-m^*-\delta_N, -m^*+\delta_N] \right) }{\mu_{N,\beta,\alpha_N}\left( m_1,m_3 \in [m^*- \eps_N, m^*+\eps_N], m_2 \in [m^*-\delta_N, m^*+\delta_N] \right) } \nonumber \\
    \ge& \exp \left\{ \alpha_N \sqrt{b_N \frac{N-b_N}{2}}\left( (-m^*-\delta_N)2(m^*+\eps_N) - (m^*+\delta_N)2(m^*+\eps_N) \right) \right\} \nonumber \\
    =& \exp \left\{ \alpha_N \sqrt{b_N \frac{N-b_N}{2}} 4\left( -(m^*)^2 -m^*(\delta_N+\eps_N) -\eps_N\delta_N \right) \right\} \nonumber\\
    &\longrightarrow  \begin{cases}
          \exp\left\{-2\sqrt{2}C(m^*)^2 \right\} & \quad \text{ if }C \in (0,\infty)\\
          0 & \quad \text{ if }C =\infty \\
          1 & \quad \text{ if }C =0
    \end{cases}, \label{eq:case1lower}
\end{align}
 Putting together \eqref{eq:case1upper} and \eqref{eq:case1lower} yields that 
 \begin{equation}\label{eq:case1}
     a( 1,-1,1,C ) =  \begin{cases}
         \exp\left\{-2\sqrt{2}C(m^*)^2 \right\} a(1,1,1,C) &  \quad \text{ if }C \in (0,\infty) \\
         0 & \quad \text{ if }C =\infty \\
          a(1,1,1,C)  &  \quad \text{ if }C =0
     \end{cases}.
 \end{equation}
 Similarly, one gets the upper and lower bounds
 \begin{align}
    &\frac{ \mu_{N,\beta,\alpha_N}\left( m_1 \in [m^*- \eps_N, m^*+\eps_N], m_2 \in [-m^*-\delta_N, -m^*+\delta_N], m_3 \in [-m^* - \eps_N, -m^* + \eps_N] \right) }{\mu_{N,\beta,\alpha_N}\left( m_1,m_3 \in [m^*- \eps_N, m^*+\eps_N], m_2 \in [m^*-\delta_N, m^*+\delta_N] \right) } \nonumber \\
    \le& \exp \left\{ -\alpha_N \sqrt{b_N \frac{N-b_N}{2}} (m^*- \delta_N)2(m^*-\eps_N)  \right\} \nonumber \\
    =& \exp \left\{ -\alpha_N \sqrt{b_N \frac{N-b_N}{2}} 2\left( (m^*)^2 -m^*(\delta_N+\eps_N) +\eps_N\delta_N \right) \right\} \nonumber \\
    &\longrightarrow \begin{cases}
          \exp\left\{-\sqrt{2}C(m^*)^2 \right\} & \quad \text{ if }C \in (0,\infty)\\
          0 & \quad \text{ if }C =\infty \\
          1 & \quad \text{ if }C =0
    \end{cases}, \label{eq:case2upper}
\end{align}
and 
\begin{align}
    &\frac{ \mu_{N,\beta,\alpha_N}\left( m_1 \in [m^*- \eps_N, m^*+\eps_N], m_2 \in [-m^*-\delta_N, -m^*+\delta_N], m_3 \in [-m^* - \eps_N, -m^* + \eps_N] \right) }{\mu_{N,\beta,\alpha_N}\left( m_1,m_3 \in [m^*- \eps_N, m^*+\eps_N], m_2 \in [m^*-\delta_N, m^*+\delta_N] \right) } \nonumber \\
    \ge& \exp \left\{ -\alpha_N \sqrt{b_N \frac{N-b_N}{2}} (m^*- \delta_N)2(m^*-\eps_N)  \right\} \nonumber \\
    =& \exp \left\{ -\alpha_N \sqrt{b_N \frac{N-b_N}{2}} 2\left( (m^*)^2 +m^*(\delta_N+\eps_N) +\eps_N\delta_N \right) \right\} \nonumber \\
    &\longrightarrow \begin{cases}
          \exp\left\{-\sqrt{2}C(m^*)^2 \right\} & \quad \text{ if }C \in (0,\infty)\\
          0 & \quad \text{ if }C =\infty \\
          1 & \quad \text{ if }C =0
    \end{cases}. \label{eq:case2lower}
\end{align}
Again, \eqref{eq:case2upper} and \eqref{eq:case2lower} together yield that
\begin{align}
    a( 1,1,-1,C)= \begin{cases}
          \exp\left\{-\sqrt{2}C(m^*)^2 \right\}a(1,1,1,C) & \quad \text{ if }C \in (0,\infty)\\
          0 & \quad \text{ if }C =\infty \\
          a(1,1,1,C) & \quad \text{ if }C =0
    \end{cases}. \label{eq:case2}
\end{align}
Lastly, using \eqref{eq:case1}, \eqref{eq:case2} and the fact that
\begin{flalign*}
    a(1,-1,1,C) &= a(-1,1,-1,C) \\
    \text{and} \quad  a(1,1,-1,C)&= a(-1,-1,1,C) = a(1,-1,-1,C) = a(-1,1,1,C),
\end{flalign*}
by the normalization of probability measures for all $C \in [0,\infty]$, we obtain the condition
$$
a(1,1,1,C)(2+ 2e^{-2\sqrt{2}C(m^*)^2}+  4 e^{-\sqrt{2}C(m^*)^2} ) =1,
$$
yielding
$$
a(1,1,1,C) = \frac{1}{2\left( 1+ e^{-\sqrt{2}C(m^*)^2} \right)^2}.
$$
This proves the statements in Theorem~\ref{thm:Blockspin}\eqref{thm:Blockspin2},\eqref{thm:Blockspin3} and in Theorem~\ref{thm:PhaseTransition}\eqref{thm:Phasetrans2}.

\end{proof}

\textbf{Acknowledgements}

Funded by the Deutsche Forschungsgemeinschaft (DFG, German Research Foundation) under Germany's Excellence Strategy EXC 2044/2 -390685587, Mathematics M\"unster: Dynamics-Geometry-Structure.

\bibliographystyle{abbrv}

\end{document}